\newcommand{\lsection}[2][""]{%
    \ifthenelse{\equal{#1}{""}}{%
        \section{#2}
    }{%
        \renewcommand{\sectionmark}[1]{\markright{\thesection.\ \MakeUppercase{#1}}}
        \section{#2}
        \renewcommand{\sectionmark}[1]{\markright{\thesection.\ \MakeUppercase{##1}}}
    }
}
\newcommand{\lchapter}[2][""]{%
    \ifthenelse{\equal{#1}{""}}{%
        \chapter{#2}
    }{%
        \renewcommand{\chaptermark}[1]{\markboth{\MakeUppercase{\chaptername\ \thechapter.\ #1}}{}}
        \chapter{#2}
        \renewcommand{\chaptermark}[1]{\markboth{\MakeUppercase{\chaptername\ \thechapter.\ ##1}}{}}
    }
}
\def\Z{\mathbb Z}
\def\R{\mathbb R}
\def\C{\mathbb C}
\def\iff{if and only if}
\def\mfd{manifold}
\def\fcn{function}
\def\str{structure}
\def\h{holomorphic}
\def\r{respectively}
\def\st{such that}
\def\(#1_#2){(#1_1,#1_2,\dots,#1_#2)}
\def\p #1_#2{#1_1#1_2\dots#1_#2}
\def\s#1_#2{#1_1+#1_2+\dots+#1_#2}
\def\wrt{with respect to}
\def\iso{isomorphism}
\def\ra{\rightarrow}
\def\lra{\longrightarrow}
\def\lla{\longleftarrow}
\def\hra{\hookrightarrow}
\def\op{\operatorname}
\def\vb{vector bundle}
\def\ssl{\smallsmile}
\def\bp{\bar\partial}
\def\ssm{\hspace{-.5mm}\smallsetminus\hspace{-.5mm}}
\def\nbd{neighborhood}
\def\scB{{\mathscr B}}
\def\scT{{\mathscr T}}
\def\scZ{{\mathscr Z}}
\def\scF{\mathscr F}
\def\scA{\mathscr A}
\def\V{\mathcal V}
\def\W{\mathcal W}
\def\scO{\mathscr O}
\def\scS{\mathscr S}
\def\scE{\mathscr E}
\def\scK{\mathscr K}
\def\scD{\mathscr D}
\def\a{\alpha}
\def\b{\beta }
\def\o{\omega}
\def\t{\theta}
\def\vP{\varPsi}
\def\vG{\varGamma}
\def\ve{\varepsilon}
\def\vt{\vartheta}
\def\vPhi{\varPhi}
\def\sV{\mathsf{V}}
\def\sW{\mathsf{W}}
\def\vv {\vskip.2cm}
\DeclareMathOperator{\sgn}{sgn}
\newtheorem{theorem}{Theorem}[section]
\newtheorem{lemma}[theorem]{Lemma}
\newtheorem{corollary}[theorem]{Corollary}
\newtheorem{proposition}[theorem]{Proposition}
\newtheorem{exa}[theorem]{Example}
\newenvironment{example}{\begin{exa} \em}{\end{exa}}
\newtheorem{exas}[theorem]{Examples}
\newtheorem{prope}[theorem]{Property}
\newtheorem{defini}[theorem]{Definition}
\newenvironment{definition}{\begin{defini} \em}{\end{defini}}
\newtheorem{rema}[theorem]{Remark}
\newenvironment{remark}{\begin{rema} \em}{\end{rema}}
\newenvironment{equationth}{\stepcounter{theorem}\begin{equation}}{\end{equation}}
\newenvironment{proof}{{\noindent \sc Proof: } }{\mbox{ }\hfill$\Box$  
                        \vspace{1.5ex} \par}
\title {\bf  \Large {Relative Dolbeault cohomology}}
\author{Tatsuo Suwa\thanks{Supported by  JSPS grants 
24540060 and 16K05116.}
}
\date{}
\begin{document}

\pagestyle{plain}


\maketitle

\begin{center} {\it In memory of Pierre Dolbeault}
\end{center}


\noindent
{\bf Abstract}

We review the notion of relative Dolbeault cohomology and prove that it is canonically
isomorphic with the local (relative) cohomology of A. Grothendieck and M. Sato with coefficients in the sheaf of \h\ forms. We deal with this cohomology from two viewpoints. One is  the \v{C}ech theoretical approach, which is convenient to
define such operations as the cup product and integration and leads to the study of local duality.
Along the way we 
also establish some notable canonical
\iso s among various cohomologies.
The other is  to regard it as the cohomology of a certain complex,
which is interpreted as a notion dual to the mapping cone in the theory of derived categories.
This approach shows that the cohomology goes well with  derived functors.
We also give some examples and indicate  applications, including  simple  explicit expressions of 
Sato hyperfunctions, fundamental operations on them and related local duality theorems.
\bigskip

\noindent
{\it Keywords}\,:  Dolbeault cohomology of an open embedding; \v{C}ech-Dolbeault cohomology; relative Dolbeault theorem; complex analytic Alexander morphism; 
Sato hyperfunctions.

\vv

\noindent
{\it Mathematics Subject Classification} (2010)\,: 14B15, 14F05, 32A45,
32C35, 32C37, 35A27, 46A20, 46F15, 46M20, 55N05, 58J15.

\lsection{Introduction}

In \cite{Su11} we discussed the cohomology theory of sheaf complexes for open embeddings of topological
spaces and gave some general ways of representing the relative  cohomology of a sheaf in terms of 
a soft or fine resolution of the sheaf. In this paper we apply the theory to the case of Dolbeault complex. 
This naturally leads to the notion of the relative Dolbeault cohomology of a complex \mfd. As is explained in \cite{Su11}, there are two ways to approach this cohomology.
One is to define it as a special case of \v{C}ech-Dolbeault cohomology. This viewpoint goes well with such operations as the cup product and the integration, which enable us to deal with the local duality problem.
The integration theory here is a descendent of the one on the \v{C}ech-de Rham cohomology, which is defined using  honeycomb systems. The other is to see it as the cohomology of a certain complex
called co-mapping cone, a notion dual to the mapping cone in the theory of derived categories.
From
this viewpoint we see that the cohomology goes well with derived functors.
 It is readily generalized to the cohomology of \h\ maps between complex
\mfd s. In any case we have the relative Dolbeault theorem which says that the relative Dolbeault cohomology is canonically isomorphic with the local  (relative) cohomology of A. Grothendieck and M. Sato (cf. \cite{Ha}, \cite{Sa}) with coefficients in the sheaf of \h\ forms (cf. Theorems ~\ref{thdrel} and \ref{thdrel2}). We also present some canonical \iso s that appear along the way (Theorem \ref{thsummary}) and give some examples and applications.

\

The paper is organized as follows.  In Section \ref{secDoe}, we introduce the Dolbeault cohomology for open embeddings of
complex \mfd s and state the aforementioned relative Dolbeault theorem. We also give  generalizations of them to the case of
\h\ maps. We recall, in Section \ref{secCD}, the \v{C}ech-Dolbeault cohomology and some related canonical \iso s. 
In Section \ref{secrDol} we review the relative Dolbeault cohomology from the \v{C}ech theoretical viewpoint and indicate an alternative proof of the relative Dolbeault theorem.

There are certain cases where there is a significant relation between the de~Rham and Dolbeault cohomologies, which
are taken up in Section \ref{secdRD}.
In Section \ref{seccupint}, we discuss the cup product and the integration. As mentioned above, the integration theory is a descendant of the one 
on the \v{C}ech-de~Rham cohomology, which we briefly recall.
In Section \ref{secld}, we discuss global and local dualities. In the global case where the \mfd\ is compact, we have the Kodaira-Serre duality. In the local case we have the duality morphism, which we call the $\bp$-Alexander morphism.
We prove an exact sequence and a commutative diagram giving relation between global and local dualities (Theorems~\ref{thexactloc} and \ref{thlongdual}). We then recall the theory of Fr\'echet-Schwartz and dual Fr\'echet-Schwartz
spaces and state a theorem where we have the local duality (Theorem \ref{pbalex}).

Finally we give in Section \ref{secapp}, some examples and applications. The correspondence of the Bochner-Martinelli form and the higher dimensional Cauchy form in the \iso\ of the Dolbeault and  \v{C}ech cohomologies
is rather well-known (cf. \cite{GH}, \cite{H2}). Here we give the canonical correspondence of them together with
integrations  in our
context (Theorems~\ref{BMCauchy} and \ref{thcdcorrint}).  We also present the local duality theorem of A. Martineau in our framework (Theorem \ref{thMH}) and, as a special case, describe the 
local residue pairing. These are closely related to the Sato hyper\fcn\ theory. In fact,  it is one of the major topics to which the relative Dolbeault theory can be applied. This application to hyper\fcn s 
is discussed in detail in \cite{HIS}. Here we take up some of the essences. As another important applications, there is the localization theory of Atiyah classes, including
the theory of analytic Thom classes (cf. \cite{ABST}, \cite{ASTT}, \cite{Su8}).

\

The author would like to thank Naofumi Honda for stimulating discussions and 
valuable comments, in particular on some materials in Section \ref{secld}.
Thanks are also due to 
Takeshi Izawa and  Toru Ohmoto
for inspiring conversations.
\lsection{Dolbeault cohomology of  open embeddings}\label{secDoe}

In this section we recall the contents of  \cite[Section 2]{Su11}  specializing them to our setting.

In the sequel, by a sheaf we mean a sheaf with at least the \str\ of  Abelian groups.
For a sheaf $\scS$ on a topological space $X$ and an open set $V$ in $X$, we denote by $\scS(V)$ the group of
sections of $\scS$ on $V$. Also for an open subset $V'$ of $V$, we denote by $\scS(V,V')$ the sections on $V$
that vanish on $V'$.


\subsection{Cohomology via flabby resolutions}
As  reference cohomology theory, we adopt the one via flabby resolutions.
Recall that a sheaf $\scF$ is {\em flabby} if the restriction 
$\scF(X)\ra\scF(V)$ is surjective for any open set $V$ in $X$.
Recall also
that every sheaf admits a flabby resolution. Let $\scS$ be a sheaf on $X$ and
\[
0\lra \scS\lra \scF^{0}\overset{d}\lra\cdots\overset{d}\lra \scF^{q}\overset{d}\lra\cdots
\]
 a flabby resolution of $\scS$. For an open set $X'$ in $X$, the $q$-th cohomology $H^{q}(X,X';\scS)$ of $(X,X')$ with coefficients in $\scS$ is the $q$-th cohomology
of the complex $(\scF^{\bullet}(X,X'),d)$. 
Note that it
 is determined uniquely modulo canonical \iso s, independently of the flabby
resolution. We denote $H^{q}(X,\emptyset;\scS)$ by $H^{q}(X;\scS)$. 
We have $H^{0}(X,X';\scS)=\scS(X,X')$. 
Setting $S=X\ssm X'$, it will also be denoted by $H^{q}_{S}(X;\scS)$. 
This cohomology in the first expression is referred to as the {\em relative cohomology} of $\scS$ on $(X,X')$
and in the
second expression the {\em local cohomology} of 
$\scS$ on $X$ with support in $S$
(cf. \cite{Ha}, \cite{Sa}). 

\subsection{Dolbeault cohomology}\label{ssDol}
Let $X$ be a complex \mfd\ of dimension $n$. We always assume that it has a countable basis so that it is 
paracompact and has
only countably many connected components.
Without loss of generality, we may assume that the coverings  we consider are  locally finite. We denote
by $\scE^{(p,q)}_{X}$ and $\scO^{(p)}_{X}$, \r, the sheaves of  $C^\infty$ $(p,q)$-forms and of \h\ $p$-forms on $X$. We denote $\scO^{(0)}_{X}$ by $\scO_{X}$. We also omit the suffix $X$ on the sheaf notation  if there is no fear of confusion.
By the Dolbeault-Grothendieck lemma, the complex $\scE^{(p,\bullet)}$ gives a fine resolution of $\scO^{(p)}$\,:
\[
0\lra\scO^{(p)}\lra\scE^{(p,0)}\overset{\bp}\lra\scE^{(p,1)}\overset{\bp}\lra\cdots\overset{\bp}\lra 
\scE^{(p,n)}\lra 0.
\]

The {\em Dolbeault cohomology} $H^{p,q}_{\bp}(X)$ of $X$ of type $(p,q)$ is the $q$-th cohomology
of the complex $(\scE^{(p,\bullet)}(X),\bp)$.
The ``de~Rham type theorem'' in \cite[Section 2]{Su11}
reads
(cf. Remark~\ref{remcano} below)\,:

\begin{theorem}[Canonical Dolbeault theorem]\label{canDol}
There
is a canonical \iso\,{\rm :}
\[
H^{p,q}_{\bp}(X)\simeq H^{q}(X;\scO^{(p)}).
\]
\end{theorem}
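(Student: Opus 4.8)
The plan is to deduce the canonical Dolbeault theorem from the general ``de~Rham type theorem'' of \cite[Section 2]{Su11} applied to the fine resolution
\[
0\lra\scO^{(p)}\lra\scE^{(p,0)}\overset{\bp}\lra\scE^{(p,1)}\overset{\bp}\lra\cdots\overset{\bp}\lra\scE^{(p,n)}\lra 0
\]
furnished by the Dolbeault--Grothendieck lemma. Since $X$ is paracompact, this is in particular a soft (indeed fine) resolution, so the general theorem identifies $H^{q}(X;\scO^{(p)})$ — computed from \emph{any} flabby resolution — with the $q$-th cohomology of the complex of global sections $(\scE^{(p,\bullet)}(X),\bp)$, which by definition is $H^{p,q}_{\bp}(X)$. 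The substance of the argument is therefore to recall why a fine (or soft) resolution computes the flabby-resolution cohomology, and to track the canonicity of the resulting comparison map.

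First I would fix a flabby resolution $0\to\scO^{(p)}\to\scF^{\bullet}$ and form the double complex $\scF^{\bullet}\otimes$-type comparison, or more directly invoke the standard fact that any two resolutions of $\scO^{(p)}$ by acyclic sheaves (flabby sheaves are acyclic; soft sheaves on a paracompact space are acyclic for the global sections functor) are connected by a canonical quasi-isomorphism after applying $\Gamma(X,-)$. Concretely, lift the identity on $\scO^{(p)}$ to a morphism of complexes $\scE^{(p,\bullet)}\to\scF^{\bullet}$ (possible since the $\scF^{j}$ are flabby, hence injective-like enough to extend maps), apply $\Gamma(X,-)$, and observe that the induced map on cohomology is an isomorphism because both complexes resolve $\scO^{(p)}$ and the sheaves involved have vanishing higher cohomology. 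This is exactly the content packaged in \cite[Section 2]{Su11}; here one only has to check the hypotheses, namely that $\scE^{(p,j)}$ is fine (it is a module over the fine sheaf $\scE^{(0,0)}_X$ of $C^\infty$ functions, which admits partitions of unity) and that $X$ is paracompact, which was assumed in Section~\ref{ssDol}.

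The canonicity claim — that the isomorphism does not depend on the choices — follows because any two lifts $\scE^{(p,\bullet)}\to\scF^{\bullet}$ of $\id_{\scO^{(p)}}$ are chain homotopic (the homotopy being constructed degreewise using flabbiness/softness again), so they induce the same map on $\Gamma(X,-)$-cohomology; and comparing two different flabby resolutions via a third gives compatibility. I would state this as ``the isomorphism is induced by any morphism of resolutions lifting the identity, and is independent of that morphism,'' referring the reader to Remark~\ref{remcano} and to \cite{Su11} for the detailed verification, since it is the standard homological-algebra argument.

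The main obstacle, such as it is, is not conceptual but bookkeeping: making precise in what sense the isomorphism is \emph{canonical}, i.e.\ that it is compatible with the functoriality in $X$ and with the connecting maps that will be needed later (for the relative version in Section~\ref{secrDol}). I would therefore be careful to phrase the comparison map as arising from a fixed universal recipe (lift $\id$ to a morphism of complexes of sheaves, then take global sections) rather than from an ad hoc choice, so that all subsequent naturality statements are immediate. Beyond that, the proof is a direct specialization of the cited general result, and I would keep it short accordingly.
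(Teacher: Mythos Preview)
Your proposal is correct and matches the paper's approach: the paper does not give an independent proof but simply records this theorem as the specialization of the general ``de~Rham type theorem'' from \cite[Section 2]{Su11} to the fine resolution $\scE^{(p,\bullet)}$ of $\scO^{(p)}$, exactly as you describe. Your additional unpacking of the acyclicity and canonicity arguments is more detail than the paper provides, but it is consistent with what is being cited.
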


\subsection{Dolbeault cohomology of  open embeddings}\label{ssoemb}

We recall the contents of \cite[Subsection 2.3]{Su11} in our situation.
Let $X$ be a complex \mfd\ of dimension $n$ as above. For an open set $X'$   in $X$ with inclusion $i:X'\hra X$, we define a complex $\scE^{(p,\bullet)}(i)$ as follows.
We set
\[
\scE^{(p,q)}(i)=\scE^{(p,q)}(X)\oplus \scE^{(p,q-1)}(X')
\]
and define the differential 
\[
\bp:\scE^{(p,q)}(i)=\scE^{(p,q)}(X)\oplus \scE^{(p,q-1)}(X')
\lra \scE^{(p,q+1)}(i)=\scE^{(p,q+1)}(X)\oplus \scE^{(p,q)}(X')
\] 
by 
\[
\bp(\o,\t)=(\bp\o,i^{*}\o-\bp\t),
\]
where $i^{*}:\scE^{(p,q)}(X)\ra \scE^{(p,q)}(X')$ denotes the pull-back of differential forms by $i$, the restriction
to $X'$ in this case. Obviously we have $\bp\circ \bp=0$.

\begin{definition} The {\em Dolbeault 
cohomology $H^{p,q}_{\bp}(i)$ of $i:X'\ra X$}
 is the
 cohomology of $(\scE^{(p,\bullet)}(i), \bp)$.
\end{definition}

Denoting by $\scE^{(p,\bullet)}[-1]$ the complex with $\scE^{(p,q)}[-1]=\scE^{(p,q-1)}$ and 
the differential $-\bp$,
we  define morphisms  $\a^{*}:\scE^{(p,\bullet)}(i)\ra \scE^{(p,\bullet)}(X)$ and $\b^{*}:\scE^{(p,\bullet)}[-1](X')
\ra \scE^{(p,\bullet)}(i)$
by
\[
\begin{aligned}
&{\a^{*}}:\scE^{(p,q)}(i)=\scE^{(p,q)}(X)\oplus \scE^{(p,q-1)}(X')\lra \scE^{(p,q)}(X),\qquad (\o,\t)\mapsto \o,\qquad\text{and}\\ 
&\b^{*}:\scE^{(p,q)}[-1](X')=\scE^{(p,q-1)}(X')\lra \scE^{(p,q)}(i)=\scE^{(p,q)}(X)\oplus \scE^{(p,q-1)}(X'),\qquad \t\mapsto (0,\t).
\end{aligned}
\]
Then we have the exact sequence of complexes
\begin{equationth}\label{sexactcomemb}
0\lra \scE^{(p,\bullet)}[-1](X')\overset{\b^{*}}\lra \scE^{(p,\bullet)}(i)\overset{\a^{*}}\lra \scE^{(p,\bullet)}(X)\lra 0,
\end{equationth}
which gives rise to the exact sequence
\begin{equationth}\label{exactcomemb}
\cdots\lra H^{p,q-1}_{\bp}(X')\overset{\b^{*}}\lra H^{p,q}_{\bar\vt}(i)\overset{\a^{*}}\lra 
H^{p,q}_{\bp}(X)\overset{i^{*}}\lra H^{p,q}_{\bp}(X')\lra\cdots.
\end{equationth}

The ``relative de Rham type theorem'' in \cite[Subsection 2.3]{Su11} reads in our case\,:

\begin{theorem}[Relative Dolbeault theorem]\label{thdrel}
There is a canonical \iso\,{\rm :}
\[
H^{p,q}_{\bar\vt}(i)\simeq H^{q}(X,X';\scO^{(p)}).
\]
\end{theorem}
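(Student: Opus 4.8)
The plan is to derive Theorem~\ref{thdrel} from the Canonical Dolbeault theorem (Theorem~\ref{canDol}) together with the Five Lemma, using the exact sequence \eqref{exactcomemb} on the topological side and the defining long exact sequence of the relative (local) cohomology $H^{q}(X,X';\scO^{(p)})$ on the sheaf side. The key point is that the co-mapping cone construction $\scE^{(p,\bullet)}(i)$ is rigged precisely so that its cohomology sits in a long exact sequence mirroring the one for $H^{\bullet}(X,X';\scO^{(p)})$, and the canonical isomorphisms of Theorem~\ref{canDol} at the level of $X$ and $X'$ should glue to give the one on the relative level.

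First I would recall that a flabby resolution $0\to\scO^{(p)}\to\scF^{\bullet}$ yields, by definition, the short exact sequence of complexes $0\to\scF^{\bullet}(X,X')\to\scF^{\bullet}(X)\to\scF^{\bullet}(X')\to 0$ (surjectivity on the right uses flabbiness), hence the long exact sequence
\[
\cdots\to H^{q}(X,X';\scO^{(p)})\to H^{q}(X;\scO^{(p)})\to H^{q}(X';\scO^{(p)})\to H^{q+1}(X,X';\scO^{(p)})\to\cdots.
\]
Next I would place the sequence \eqref{exactcomemb} alongside this one. The comparison maps are supplied by Theorem~\ref{canDol}: the vertical arrows $H^{p,q}_{\bp}(X)\simeq H^{q}(X;\scO^{(p)})$ and $H^{p,q}_{\bp}(X')\simeq H^{q}(X';\scO^{(p)})$ are already canonical isomorphisms. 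It then remains to construct a chain map realizing the comparison in the middle term $H^{p,q}_{\bp}(i)\to H^{q}(X,X';\scO^{(p)})$ and to check that the resulting ladder commutes; once commutativity is in hand, the Five Lemma forces the middle arrow to be an isomorphism. The natural way to produce the middle map concretely is to note that both $\scE^{(p,\bullet)}(i)$ and $\scF^{\bullet}(X,X')$ compute the hypercohomology of the complex of sheaves $\scO^{(p)}$ relative to the pair $(X,X')$, so one compares each to a common refinement — e.g. first show $\scE^{(p,\bullet)}(i)$ is quasi-isomorphic to the simple complex associated to the bicomplex built from the fine resolution $\scE^{(p,\bullet)}$ applied to the pair, using that $\scE^{(p,\bullet)}$ is a (fine, hence $\Gamma$-acyclic) resolution of $\scO^{(p)}$ and that fine sheaves are soft, so restriction $\scE^{(p,q)}(X)\to\scE^{(p,q)}(X')$ is surjective.

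The main obstacle I expect is the \emph{naturality/commutativity} of the comparison ladder, not the existence of the middle isomorphism abstractly. Abstractly, all three terms are hypercohomology groups and the isomorphisms follow from resolution-independence; but to conclude via the Five Lemma one must pin down that the connecting homomorphisms $\b^{*}$ and $\a^{*}$ of \eqref{exactcomemb} correspond, under the chosen isomorphisms, to the connecting maps and restriction/inclusion maps of the flabby-resolution long exact sequence. Concretely this amounts to tracing a class $(\o,\t)\in\scE^{(p,q)}(i)$ with $\bp(\o,\t)=0$ — i.e. $\bp\o=0$ and $i^{*}\o=\bp\t$ — through the snake-lemma diagram chase and matching it with the cocycle it determines in $\scF^{q}(X,X')$ after passing to a common fine/flabby refinement; the sign conventions ($-\bp$ on the shifted complex, the sign in the co-mapping cone differential) must be arranged to match those in the algebraic connecting homomorphism. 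Since \cite{Su11} already carries out the general "relative de~Rham type theorem" at the level of arbitrary fine resolutions of a sheaf, the cleanest route is simply to invoke that general statement with $\scS=\scO^{(p)}$ and the resolution $\scE^{(p,\bullet)}$; the present theorem is then its specialization, and the only thing to verify here is that $\scE^{(p,\bullet)}(i)$ as defined above literally agrees with the complex built in \cite{Su11} for an open embedding — which is immediate from the definitions.
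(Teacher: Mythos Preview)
Your proposal is correct, and your concluding recommendation---to invoke the general ``relative de~Rham type theorem'' of \cite{Su11} with $\scS=\scO^{(p)}$ and the fine resolution $\scE^{(p,\bullet)}$---is exactly how the paper proves Theorem~\ref{thdrel}: it is stated simply as the specialization of that result to the Dolbeault setting, with no further argument.

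The Five Lemma sketch you give beforehand is a reasonable unpacking of what such a general theorem must contain, but note that it is not itself a proof until the middle comparison map is actually constructed and the ladder shown to commute---a point you rightly flag as the main obstacle. The paper does not take this route directly. Instead, its \emph{alternative} proof (Section~\ref{secrDol}, Theorems~\ref{3.2rel} and~\ref{thdrel2}) goes via \v{C}ech--Dolbeault cohomology: one takes a Stein covering $(\W,\W')$, produces an explicit chain map $\varphi:\scE^{(p,\bullet)}(\V^{\star},\V')\to\scE^{(p,\bullet)}(\W,\W')$ inducing an isomorphism on cohomology, and then applies Theorem~\ref{thsummary}.\,2 to identify $H^{p,q}_{\bar\vt}(\W,\W')$ with $H^{q}(X,X';\scO^{(p)})$. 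That approach trades the abstract resolution-comparison for a concrete Stein-covering computation, which has the side benefit of making the cup product and integration structures transparent later in the paper.
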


\begin{remark}\label{remBT} {\bf 1.}  The above cohomology $H^{p,q}_{\bar\vt}(i)$ has already appeared in a number of literatures, e.g., \cite{HT} and \cite{Id}. For the de~Rham complex it is introduced in  \cite{BT} in a little more general setting (cf. Remark~\ref{remcohmap}.\,1 below). 
\smallskip

\noindent
{\bf 2.} The complex $\scE^{(p,\bullet)}(i)$ is nothing but the ``co-mapping cone'' $M^{*}(i^{*})$ of the
morphism $i^{*}:\scE^{(p,\bullet)}(X)\ra\scE^{(p,\bullet)}(X')$ (cf. \cite[Section 5]{Su11}). It is also identical with the complex $\scE^{(p,\bullet)}(\V^{\star},\V')$ considered in Section \ref{secrDol} below and the cohomology $H^{p,q}_{\bp}(i)$ is identical with $H^{p,q}_{\bar\vt}(X,X')$,  the {\em relative Dolbeault cohomology} 
of $(X,X')$ (cf. \eqref{rel=cmc}).
\smallskip

\noindent
{\bf 3.} If we follow the notation of \cite{Su11}, $\scE^{(p,\bullet)}(i)$ should be denoted something like
$\scE(i)^{(p,\bullet)}$. The same remark applies to the notation such as $\scE^{(p,\bullet)}(\W,\W')$ in the subsequent sections.
%
\end{remark}

\paragraph{Dolbeault cohomology of  \h\ maps\,:}

Let $f:Y\ra X$ be a \h\ map of complex \mfd s. We may directly generalize the above construction to this situation, replacing $X'$ and $i$ by $Y$ and $f$.  Thus we set
\[
\scE^{(p,q)}(f)=\scE^{(p,q)}_{X}(X)\oplus \scE^{(p,q-1)}_{Y}(Y)
\]
and define $\bp:\scE^{(p,q)}(f)\ra \scE^{(p,q+1)}(f)$ by
\[
\bp(\o,\t)=(\bp\o,f^{*}\o-\bp\t).
\]
Then $(\scE^{(p,\bullet)}(f),\bp)$ is a complex. 

\begin{definition} The Dolbeault cohomology $H^{p,q}_{\bp}(f)$ of $f$ of type $(p,q)$ is defined as the 
$q$-th cohomology of $(\scE^{(p,\bullet)}(f),\bp)$.
\end{definition}

We denote by $\scE^{(p,\bullet)}_{Y}(Y)[-1]$ the complex \st\ $\scE^{(p,\bullet)}_{Y}(Y)[-1]^{q}=\scE^{(p,q-1)}(Y)$ with
the differential given by $-\bp$. Then we have the exact sequence of complexes
\[
0\lra \scE^{(p,\bullet)}_{Y}(Y)[-1]^{q}\overset{\b^{*}}\lra \scE^{(p,\bullet)}(f)\overset{\a^{*}}\lra \scE^{(p,\bullet)}_{X}(X)\lra 0,
\]
where $\a^{*}(\o,\t)=\o$ and $\b^{*}(\t)=(0,\t)$. Then we have the exact sequence
\[
\cdots \lra H^{p,q-1}_{\bp}(Y)\overset{\b^{*}}\lra H^{p,q}_{\bp}(f)\overset{\a^{*}}\lra H^{p,q}_{\bp}(X)
\overset{f^{*}}\lra H^{p,q}_{\bp}(Y)\lra\cdots.
\]


In the case $Y=X'$ is an open set in $X$ and $f=i$ is the inclusion, the above cohomology is nothing but $H^{p,q}_{\bp}(i)$  defined before.

\begin{remark}\label{remcohmap} {\bf 1.} Similar construction is done in \cite{BT} for the de~Rham case.
\smallskip

\noindent
{\bf 2.} The complex $\scE^{(p,\bullet)}(f)$ is nothing but the co-mapping cone $M^{*}(f^{*})$ of the
morphism $f^{*}:\scE^{(p,\bullet)}_{X}(X)\ra\scE^{(p,\bullet)}_{Y}(Y)$ (cf. \cite[Section 5]{Su11}).
\end{remark}

There is the notion of the cohomology of a sheaf morphism (cf. \cite[Section 6]{Su11} and references therein).
In our case it is defined as follows. We first consider the space $Z(f)=X\amalg Y$ (disjoint union). For an open set $U$ in $X$, we set  $\tilde U=U\amalg f^{-1}U$
and endow $Z(f)$
with the topology whose basis of open sets consists of 
$\{\, \tilde U\mid U\subset X\ \text{open sets}\,\}$ and $\{\,V\mid V\subset Y\ \text{open sets}\,\}$.
Then we have the closed embedding $X\hra Z(f)$ and the open embedding $Y\hra Z(f)$.
Recall in general that, for a sheaf $\scT$ on $Y$,  the direct image $f_{*}\scT$ is the sheaf on $X$ defined by the presheaf
$U\mapsto \scT(f^{-1}U)$. In our situation, there is the sheaf morphism $f^{*}:\scO^{(p)}_{X}\ra f_{*}\scO^{(p)}_{Y}$
given by the pull-back of differential forms. Let $\scZ^{*}(f^{*})=\scZ^{*}(\scO^{(p)}_{Y}\overset{f^{*}}\leftarrow \scO^{(p)}_{X})$ be the sheaf on $Z(f)$ defined by 
the presheaf $\tilde U\mapsto \scO^{(p)}_{X}(U)$
and $V\mapsto \scO^{(p)}_{Y}(V)$. The presheaf is a sheaf, i.e., 
 $\scZ^{*}(f^{*})(\tilde U)=\scO^{(p)}_{X}(U)$
 and
$\scZ^{*}(f^{*})(V)=\scO^{(p)}_{Y}(V)$. The restriction $\scZ^{*}(f^{*})(\tilde U)=\scO^{(p)}_{X}(U)\ra \scZ^{*}(f^{*})(f^{-1}U)=
\scO^{(p)}_{Y}(f^{-1}U)$
is given by $f^{*}$. Then the cohomology $H^{q}(f;f^{*})=H^{q}(Y\overset f \ra X;\scO^{(p)}_{Y}\overset{f^{*}}\leftarrow \scO^{(p)}_{X})$ of the morphism $f^{*}:\scO^{(p)}_{X}\ra f_{*}\scO^{(p)}_{Y}$
is defined by (cf. \cite[Section 6]{Su11})
\[
H^{q}(Y\overset f \ra X;\scO^{(p)}_{Y}\overset{f^{*}}\leftarrow \scO^{(p)}_{X})=H^{q}(Z(f),Z(f)\ssm X; \scZ^{*}(\scO^{(p)}_{X}\overset{f^{*}}\ra \scO^{(p)}_{Y})).
\]
There is an exact sequence\,:
\[
\cdots\lra H^{q-1}(Y;\scO^{(p)}_{Y})\lra H^{q}(f;f^{*})\lra H^{q}(X;\scO^{(p)}_{X})\lra H^{q}(Y;\scO^{(p)}_{Y})\lra\cdots.
\]

In the case $f:Y\hra X$ is an open embedding, we may identify $f^{*}$ with the pull-back $f^{-1}$ of sections
and we write $H^{q}(Y\overset f \ra X;\scO^{(p)}_{Y}\overset{f^{*}}\leftarrow \scO^{(p)}_{X})$ as $H^{q}(f;\scO^{(p)}_{X})$. Thus we have (cf. loc. cit)\,:
\begin{proposition}\label{propoe3} In the case $f:Y\hra X$ is an open embedding, 
there is a canonical \iso 
\[
H^{q}(f;\scO^{(p)}_{X})\simeq H^{q}(X,Y;\scO^{(p)}_{X}).
\]
\end{proposition}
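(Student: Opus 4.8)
The plan is to compute the group $H^{q}(i;\scO^{(p)}_{X})=H^{q}(Z(i),Z(i)\ssm X;\scZ^{*}(i^{*}))$ directly from a flabby resolution, by lifting a flabby resolution of $\scO^{(p)}_{X}$ on $X$ to one of $\scZ^{*}(i^{*})$ on $Z(i)$. Write $Z=Z(i)=X\amalg Y$. Since $i$ is an open embedding, $i^{*}$ is the restriction of sections, $\scO^{(p)}_{Y}=\scO^{(p)}_{X}|_{Y}$, the whole $Y$-component is itself one of the distinguished open sets of $Z$, and, as $X\hra Z$ is a closed embedding, $Z\ssm X$ is precisely that open $Y$-component.

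First I would fix a flabby resolution $0\lra\scO^{(p)}_{X}\lra\scF^{\bullet}$ on $X$. The restriction $\scF^{\bullet}|_{Y}$ is then a flabby resolution of $\scO^{(p)}_{Y}$, and the restriction morphisms $\scF^{j}\ra i_{*}(\scF^{j}|_{Y})$ allow one to form, exactly as in the definition of $\scZ^{*}(i^{*})$, the sheaves $\mathscr{G}^{j}=\scZ^{*}(\scF^{j}|_{Y}\overset{\mathrm{res}}\lla\scF^{j})$ on $Z$ together with an augmented complex $0\lra\scZ^{*}(i^{*})\lra\mathscr{G}^{\bullet}$. Two points have to be checked. (i) This complex is exact in positive degrees: its stalk at a point of the $Y$-component is the corresponding stalk complex of $\scF^{\bullet}|_{Y}$, and its stalk at a point $x$ of the $X$-component is $\varinjlim_{U\ni x}(\scF^{\bullet}(U),d)=(\scF^{\bullet}_{x},d)$; in either case it resolves the relevant stalk of $\scZ^{*}(i^{*})$. (ii) Each $\mathscr{G}^{j}$ is flabby: an arbitrary open set of $Z$ has the form $U\amalg V$ with $U\subseteq X$, $V\subseteq Y$ open and $U\cap Y\subseteq V$, a section of $\mathscr{G}^{j}$ over it is a pair $(s,t)\in\scF^{j}(U)\times\scF^{j}(V)$ with $s|_{U\cap Y}=t|_{U\cap Y}$, and such a pair glues to a section of $\scF^{j}$ over the open set $U\cup V\subseteq X$, which by flabbiness of $\scF^{j}$ extends to all of $X$, i.e.\ to $\scF^{j}(X)=\mathscr{G}^{j}(Z)$. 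Hence $\mathscr{G}^{\bullet}$ is a flabby resolution of $\scZ^{*}(i^{*})$ on $Z$, and so $H^{q}(i;\scO^{(p)}_{X})=H^{q}\big(\mathscr{G}^{\bullet}(Z,Z\ssm X),d\big)$.

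It then remains to identify this last complex. Since $Z=\tilde X$ one has canonically $\mathscr{G}^{j}(Z)=\scF^{j}(X)$, and the section corresponding to $s\in\scF^{j}(X)$ lies in $\mathscr{G}^{j}(Z,Z\ssm X)$ — i.e.\ restricts to $0$ on the $Y$-component $Z\ssm X$, on which $\mathscr{G}^{j}$ is $\scF^{j}|_{Y}$ — exactly when $s|_{Y}=0$. Therefore $\mathscr{G}^{j}(Z,Z\ssm X)=\scF^{j}(X,Y)$, compatibly with the differentials, and passing to cohomology gives
\[
H^{q}(i;\scO^{(p)}_{X})\simeq H^{q}\big(\scF^{\bullet}(X,Y),d\big)=H^{q}(X,Y;\scO^{(p)}_{X}),
\]
the last equality being the definition of relative cohomology via $\scF^{\bullet}$. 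Independence of the chosen flabby resolution, hence canonicity, follows from the standard comparison argument for flabby resolutions.

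Alternatively, one could argue with long exact sequences: the projection $Z\ra X$ which is the identity on $X$ and $i$ on $Y$ is a morphism of pairs $(Z,Z\ssm X)\ra(X,Y)$, and it induces a ladder relating the exact sequence displayed just before the statement to the long exact sequence of the pair $(X,Y)$ with coefficients in $\scO^{(p)}_{X}$, in which the maps on the $H^{q}(X;\scO^{(p)}_{X})$-terms and on the $H^{q-1}(Y;\scO^{(p)}_{Y})=H^{q-1}(Y;\scO^{(p)}_{X})$-terms are identities; the five lemma then yields the claim. I expect the flabbiness verification in (ii) — equivalently, in the second approach, the commutativity of the squares involving the connecting homomorphisms — to be the only point requiring genuine care; everything else is formal bookkeeping with the topology of $Z$.
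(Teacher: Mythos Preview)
Your proof is correct. The paper itself does not give a proof of this proposition but defers it to the companion paper \cite{Su11} with the phrase ``(cf.\ loc.\ cit)'', so there is no in-paper argument to compare against; your direct construction of a flabby resolution $\mathscr{G}^{\bullet}$ of $\scZ^{*}(i^{*})$ on $Z(i)$ from a flabby resolution $\scF^{\bullet}$ of $\scO^{(p)}_{X}$, followed by the identification $\mathscr{G}^{j}(Z,Z\ssm X)=\scF^{j}(X,Y)$, is a clean self-contained proof. The key verifications---your description of a general open set of $Z(i)$ as $U\amalg V$ with $U\cap Y\subseteq V$, the gluing of $(s,t)$ to a section over $U\cup V\subseteq X$, and the stalk computation---are all sound; the alternative five-lemma sketch is also viable but, as you note, requires checking the connecting-square commutativity, which is no easier than the flabbiness check you already carried out.
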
 

In general, since we have the commutative the diagram
\[
\SelectTips{cm}{}
\xymatrix@C=.7cm
@R=.6cm
{ 0\ar[r]& \scO^{(p)}_{X}\ar[r]\ar[d]^-{f^{*}}&\scE^{(p,\bullet)}_{X}\ar[d]^-{f^{*}}\\
0\ar[r] & f_{*}\scO^{(p)}_{Y} \ar[r] & f_{*}\scE^{(p,\bullet)}_{Y},}
\]
 we have\,:
\begin{theorem}[Generalized relative Dolbeault theorem]\label{thgrD} For a \h\ map $f:Y\ra X$ of complex
\mfd s, there is a canonical \iso\,{\rm :}
\[
H^{p,q}_{\bp}(f)\simeq H^{q}(Y\overset f \ra X;\scO^{(p)}_{Y}\overset{f^{*}}\leftarrow \scO^{(p)}_{X}).
\]
\end{theorem}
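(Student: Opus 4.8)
The plan is to imitate the proof of the Relative Dolbeault theorem (Theorem~\ref{thdrel}) and of Theorem~\ref{canDol}, but carried out on the space $Z(f)$ equipped with the sheaf complex $\scZ^{*}(f^{*})$ and its fine resolution. First I would assemble the fine resolution of $\scZ^{*}(f^{*})$ on $Z(f)$. On the closed piece $X$ we have the Dolbeault resolution $\scO^{(p)}_{X}\to \scE^{(p,\bullet)}_{X}$, and on the open piece $Y$ the resolution $\scO^{(p)}_{Y}\to \scE^{(p,\bullet)}_{Y}$; these glue, via the commutative square displayed just before the statement, into a resolution of $\scZ^{*}(f^{*})$ by sheaves $\scE^{(p,q)}$ on $Z(f)$ whose sections over $\tilde U$ are $\scE^{(p,q)}_{X}(U)$ and over $V\subset Y$ are $\scE^{(p,q)}_{Y}(V)$. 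One checks that these sheaves are fine (hence soft, hence acyclic for the relative cohomology) because $X$ and $Y$ are paracompact complex manifolds and partitions of unity are available on each piece; the gluing along $f^{-1}U\subset \tilde U$ is by $f^{*}$, which is compatible with multiplication by pulled-back bump functions, so fineness is preserved. This is the same soft/fine-resolution computation of $H^{q}(Z(f),Z(f)\ssm X;-)$ that underlies \cite[Section 6]{Su11}.

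Next I would identify the complex of global relative sections of this resolution with $\scE^{(p,\bullet)}(f)$. A global section of $\scE^{(p,q)}$ on $Z(f)$ is a pair consisting of $\o\in\scE^{(p,q)}_{X}(X)$ and $\eta\in\scE^{(p,q)}_{Y}(Y)$ with $\eta|_{f^{-1}U}=f^{*}(\o|_{U})$ for all $U$, i.e.\ simply $\eta=f^{*}\o$; so $\scE^{(p,q)}(Z(f))\cong\scE^{(p,q)}_{X}(X)$. The relative sections—those vanishing on $Z(f)\ssm X=Y$—together with the standard co-mapping-cone bookkeeping that shifts the $Y$-component by one degree (exactly as in the construction of $\scE^{(p,\bullet)}(i)$ recalled above, cf.\ Remark~\ref{remcohmap}.\,2), yield precisely $\scE^{(p,q)}(f)=\scE^{(p,q)}_{X}(X)\oplus\scE^{(p,q-1)}_{Y}(Y)$ with differential $\bp(\o,\t)=(\bp\o,\,f^{*}\o-\bp\t)$. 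Concretely this is the general mechanism of \cite[Section 5]{Su11} expressing the cohomology of a sheaf morphism via the co-mapping cone of the induced morphism of fine resolutions; I would invoke that and just spell out the identification of terms and differentials. Taking cohomology of $\scE^{(p,\bullet)}(f)$ then computes $H^{q}(Z(f),Z(f)\ssm X;\scZ^{*}(f^{*}))$, which by definition is $H^{q}(Y\overset f\ra X;\scO^{(p)}_{Y}\overset{f^{*}}\leftarrow\scO^{(p)}_{X})$, giving the stated isomorphism. Canonicity follows because the resolution was obtained from the canonical Dolbeault resolutions on $X$ and $Y$, and the comparison with an arbitrary flabby resolution of $\scZ^{*}(f^{*})$ is the usual acyclic-resolution argument, unique up to canonical isomorphism.

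Finally, for consistency I would note that the exact sequence in the statement matches the one displayed just before it: the short exact sequence of sheaf complexes $0\to j_{!}\scE^{(p,\bullet)}_{Y}[-1]\to\scE^{(p,\bullet)}(\text{on }Z(f),\text{rel }Y)\to\scE^{(p,\bullet)}_{X}\to 0$ on $Z(f)$ induces, on global sections, exactly $0\to\scE^{(p,\bullet)}_{Y}(Y)[-1]\overset{\b^{*}}\to\scE^{(p,\bullet)}(f)\overset{\a^{*}}\to\scE^{(p,\bullet)}_{X}(X)\to 0$, whose long exact cohomology sequence is the one recalled above; under the Dolbeault theorems \ref{canDol} for $X$ and $Y$ this is the long exact sequence of $(Z(f),Z(f)\ssm X)$, so the isomorphism is compatible with the connecting maps. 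The main obstacle is the verification that the glued sheaves on $Z(f)$ form a genuine fine (or at least soft/acyclic) resolution of $\scZ^{*}(f^{*})$—one must be careful that softness survives the non-Hausdorff-flavoured gluing topology on $Z(f)$ where $X$ is closed and $Y$ is open—but since $f_{*}$ of a soft sheaf is soft and the resolution restricts to the standard Dolbeault resolutions on the two strata, this is handled exactly as in \cite[Sections 5 and 6]{Su11}; everything else is bookkeeping.
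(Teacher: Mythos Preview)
Your proposal is correct and follows essentially the same approach as the paper: the paper's entire argument is the observation that the commutative square
\[
\SelectTips{cm}{}
\xymatrix@C=.7cm@R=.5cm
{ 0\ar[r]& \scO^{(p)}_{X}\ar[r]\ar[d]^-{f^{*}}&\scE^{(p,\bullet)}_{X}\ar[d]^-{f^{*}}\\
0\ar[r] & f_{*}\scO^{(p)}_{Y} \ar[r] & f_{*}\scE^{(p,\bullet)}_{Y}}
\]
exists, after which the result is read off from the general machinery of \cite[Sections~5 and~6]{Su11}; you have simply unpacked what that machinery does on $Z(f)$. One small point of phrasing: the passage ``the relative sections---those vanishing on $Z(f)\ssm X=Y$---together with the standard co-mapping-cone bookkeeping'' is a little loose, since the literal kernel of restriction to $Y$ is $\{\o:f^{*}\o=0\}$, not $\scE^{(p,\bullet)}(f)$; the identification with the co-mapping cone really does require the quasi-isomorphism from \cite[Section~5]{Su11} that you cite, not merely the vanishing condition, so it would be cleaner to invoke that result directly rather than suggest the two complexes coincide on the nose.
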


In the case $f:Y\hra X$ is an open embedding, the above reduces to Theorem \ref{thdrel}.
\lsection{\v{C}ech-Dolbeault cohomology}\label{secCD}

We recall the contents of \cite[Section 3]{Su11}  specializing them to our setting.

\subsection{\v{C}ech cohomology}


Let $X$ be a topological space $\scS$ a sheaf on $X$ and $\W=\{W_\a\}_{\a\in I}$ 
an open covering of $X$. 
We set $W_{\a_0\dots\a_q}=W_{\a_0}\cap\dots\cap W_{\a_q}$ and consider  the direct product 
\[
C^{q}(\W; \scS)=\prod_{(\a_0,\dots,\a_{q})\in I^{q+1}}\scS(W_{\a_0\dots\a_{q}}).
\]
The $q$-th \v{C}ech cohomology $H^{q}(\W;\scS)$ of $\scS$ on $\W$  is the $q$-th cohomology
of the complex $(C^{\bullet}(\W; \scS),\check\delta)$ with $\check\delta:C^{q}(\W; \scS)\ra C^{q+1}(\W; \scS)$ 
defined by
\[
(\check\delta\sigma)_{\a_0\dots \a_{q+1}}
=\sum_{\nu=0}^{q+1}(-1)^\nu\sigma_{\a_0\dots\widehat{\a_{\nu}}\dots\a_{q+1}}.
\]

Let  $X'$ be an open set in $X$. Let $\W=\{W_{\a}\}_{\a\in I}$ be a covering 
of $X$ \st\ $\W'=\{W_{\a}\}_{\a\in I'}$ is a covering  of $X'$ for some $I'\subset I$.
We set
\[
C^{q}(\W, \W';\scS)
=\{\,\sigma\in C^{q}(\W; \scS)\mid \sigma_{\a_0\dots \a_{q}}=0\ \ \text{if}\ \ 
\a_0,\dots ,\a_{q}\in I'\,\}
\]
The  operator $\check\delta$ restricts to 
$C^{q}(\W,  \W';\scS)\to C^{q+1}(\W,  \W';\scS)$. 
The $q$-th \v{C}ech cohomology $H^{q}(\W,\W';\scS)$ of $\scS$ on $(\W,\W')$  is the $q$-th cohomology
of 
$(C^{\bullet}(\W,\W'; \scS),\check\delta)$. 

We have the following\,:
\begin{theorem}[Relative Leray theorem]\label{thleray} If $H^{q_{2}}(W_{\a_0\dots\a_{q_{1}}},\scS)=0$
 for $q_{1}\ge 0$ and $q_{2}\ge 1$, there is a canonical \iso
\[
H^{q}(\W,\W';\scS)\simeq H^{q}(X,X';\scS).
\]
\end{theorem}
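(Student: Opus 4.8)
The plan is to compare both $H^{q}(\W,\W';\scS)$ and $H^{q}(X,X';\scS)$ with the total cohomology of the \v{C}ech--flabby double complex attached to a flabby resolution of $\scS$. Fix such a resolution $0\lra\scS\lra\scF^{0}\overset{d}\lra\scF^{1}\overset{d}\lra\cdots$ and form the double complex $K^{p,q}=C^{p}(\W,\W';\scF^{q})$, with horizontal differential the \v{C}ech coboundary $\check\delta$ and vertical differential $d$. I note at the outset that $C^{p}(\W,\W';\scF^{q})$ is the sub-product of $C^{p}(\W;\scF^{q})$ over those multi-indices $(\a_{0},\dots,\a_{p})$ not entirely contained in $I'$, and that the quotient $C^{p}(\W;\scF^{q})/C^{p}(\W,\W';\scF^{q})$ is canonically identified with $C^{p}(\W';\scF^{q})$ (extension by zero gives the splitting at the level of cochains); hence, for every $q$, there is a short exact sequence of complexes
\[
0\lra C^{\bullet}(\W,\W';\scF^{q})\lra C^{\bullet}(\W;\scF^{q})\lra C^{\bullet}(\W';\scF^{q})\lra 0 .
\]

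First I would take cohomology of $K$ in the vertical ($d$) direction. For fixed $p$, the column $C^{p}(\W,\W';\scF^{\bullet})$ is the product, over multi-indices not lying in $I'$, of the complexes $\scF^{\bullet}(W_{\a_{0}\dots\a_{p}})$. Since the restriction of a flabby sheaf to an open set is flabby, $\scF^{\bullet}|_{W_{\a_{0}\dots\a_{p}}}$ is a flabby resolution of $\scS|_{W_{\a_{0}\dots\a_{p}}}$, so this complex computes $H^{\bullet}(W_{\a_{0}\dots\a_{p}};\scS)$; by the hypothesis this is concentrated in degree $0$, where it equals $\scS(W_{\a_{0}\dots\a_{p}})$. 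A product of exact sequences is exact, so the vertical cohomology of $K$ is $C^{p}(\W,\W';\scS)$ in degree $0$ and vanishes in higher degree. The spectral sequence of the double complex for this filtration therefore collapses and gives $H^{n}(\mathrm{Tot}\,K)\simeq H^{n}(C^{\bullet}(\W,\W';\scS),\check\delta)=H^{n}(\W,\W';\scS)$.

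Next I would take cohomology of $K$ in the horizontal ($\check\delta$) direction, for fixed $q$. The point is that a flabby sheaf has no higher \v{C}ech cohomology, also in the relative setting: $H^{p}(\W,\W';\scF^{q})=0$ for $p\ge 1$ and $H^{0}(\W,\W';\scF^{q})=\scF^{q}(X,X')$. This I would derive from the short exact sequence of complexes above by passing to the long exact \v{C}ech cohomology sequence. For the flabby sheaf $\scF^{q}$ one has $H^{p}(\W;\scF^{q})=H^{p}(\W';\scF^{q})=0$ for $p\ge 1$ (the classical acyclicity of flabby sheaves for \v{C}ech cohomology), while $H^{0}(\W;\scF^{q})\ra H^{0}(\W';\scF^{q})$ is the restriction $\scF^{q}(X)\ra\scF^{q}(X')$, which is surjective because $\scF^{q}$ is flabby. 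The long exact sequence then forces $H^{p}(\W,\W';\scF^{q})=0$ for $p\ge 1$ and identifies $H^{0}(\W,\W';\scF^{q})$ with $\Ker(\scF^{q}(X)\ra\scF^{q}(X'))=\scF^{q}(X,X')$. Hence the spectral sequence for the other filtration collapses as well, yielding $H^{n}(\mathrm{Tot}\,K)\simeq H^{n}(\scF^{\bullet}(X,X'),d)=H^{n}(X,X';\scS)$.

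Combining the two computations gives the asserted isomorphism. It is canonical because it is realized by the two edge homomorphisms induced by the augmentations $\scS\ra\scF^{0}$ and $\scF^{q}(X,X')\ra C^{0}(\W,\W';\scF^{q})$ into the double complex, and these, like the reference cohomology itself, are independent of the chosen flabby resolution up to canonical isomorphism. The only non-formal ingredient is the relative flabby acyclicity used in the third step; I expect this to be the main obstacle, but as indicated it reduces cleanly to the absolute statement via the three-term exact sequence of \v{C}ech complexes together with the surjectivity of $\scF^{q}(X)\ra\scF^{q}(X')$.
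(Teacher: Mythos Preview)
Your argument is the standard double-complex proof of the relative Leray theorem and is correct as written: the two spectral sequences of the \v{C}ech--flabby double complex $C^{\bullet}(\W,\W';\scF^{\bullet})$ collapse, one by the acyclicity hypothesis on the $W_{\a_{0}\dots\a_{q_{1}}}$, the other by the relative \v{C}ech acyclicity of flabby sheaves that you derive from the short exact sequence of \v{C}ech complexes and the surjectivity of $\scF^{q}(X)\ra\scF^{q}(X')$.

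Note, however, that the paper does not give its own proof of this theorem. It is stated without argument in Section~\ref{secCD}, which opens by saying that it recalls material from \cite{Su11}; the Relative Leray theorem is quoted from there. So there is no proof in the paper to compare against. Your proof is exactly the kind of argument one expects to find in the reference, and nothing in it is specific to the Dolbeault setting---it is the general sheaf-theoretic statement, as it should be.
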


\subsection{\v{C}ech-Dolbeault cohomology}\label{ssecCD}
We review the contents of 
 \cite[Subsection 3.2]{Su11} in our case.

Let $X$,  $\scE^{(p,q)}_{X}$ and $\scO^{(p)}_{X}$ be as in Subsection \ref{ssDol}.
Also let 
$X'$ be an open set in $X$ and let $\W$ and $\W'$ be coverings of $X$ and $X'$ as before. 
Then
we have a double complex $(C^\bullet(\W,\W';\scE^{(p,\bullet)}),\check\delta,(-1)^{\bullet}\bp)$. We consider the associated single complex 
$(\scE^{(p,\bullet)}(\W,\W'), \bar\vt)$.
Thus
\[
\scE^{(p,q)}(\W,\W')=\bigoplus_{q_{1}+q_{2}=q}C^{q_{1}}(\W,\W';\scE^{(p,q_{2})}),\qquad \bar\vt=\check\delta+(-1)^{q_{1}}\bp.
\]

\begin{definition}\label{defCD} The {\em \v{C}ech-Dolbeault cohomology} $H^{p,q}_{\bar\vt}(\W,\W')$ of type $(p,q)$ on $(\W,\W')$ is the $q$-th cohomology
of the complex $(\scE^{(p,\bullet)}(\W,\W'), \bar\vt)$.
\end{definition}

In the case $X'=\emptyset$, we take $\emptyset$ as  $I'$ and
denote $\scE^{(p,\bullet)}(\W,\W')$ and $H^{p,q}_{\bar\vt}(\W,\W')$  by $\scE^{(p,\bullet)}(\W)$ and $H^{p,q}_{\bar\vt}(\W)$.

We recall the description of the differential $\bar\vt$ as given in \cite{Su11}.
Note that a  cochain $\xi$ in $\scE^{(p,q)}(\W,\W')$
 may be expressed as 
$\xi=(\xi^{q_{1}})_{0\le q_{1}\le q}$ with $\xi^{q_{1}}$ in $C^{q_{1}}(\W,\W';\scE^{(p,q-q_{1})})$. In the sequel $\xi^{q_{1}}_{\a_0\dots \a_{q_{1}}}$ is also written as $\xi_{\a_0\dots \a_{q_{1}}}$.
Then   $\bar\vt:\scE^{(p,q)}(\W,\W')\ra \scE^{(p,q+1)}(\W,\W')$ is given by
\begin{equationth}\label{cdrdiff}
(\bar\vt\xi)^{q_{1}}=\check\delta\xi^{q_{1}-1}+(-1)^{q_{1}}\bp\xi^{q_{1}},\qquad 0\le q_{1}\le q+1,
\end{equationth}
where we set $\xi^{-1}=0$ and $\xi^{q+1}=0$.
In particular, for $q_{1}=0,1$,
\begin{equationth}\label{small}
(\bar\vt\xi)_{\a_0}=\bp\xi_{\a_0},\qquad (\bar\vt\xi)_{\a_0\a_1}
=\xi_{\a_1}-\xi_{\a_0}-\bp\xi_{\a_0\a_1}.
\end{equationth}
Thus the  condition for $\xi$ being a cocycle is given by
\[
\begin{cases}
  \bp\xi^{0}=0,\\
  \check\delta\xi^{q_{1}-1}+(-1)^{q_{1}}\bp\xi^{q_{1}}=0,\qquad 1\le q_{1}\le q,\\
  \check\delta\xi^{q}=0.
\end{cases}
\]

We have $H^{p,0}_{\bar\vt}(\W,\W')=\scO^{(p)}(X,X')$.

For a triple $(\W,\W',\W'')$, we have the exact sequence
\begin{equationth}\label{sexactcd}
0\lra \scE^{(p,\bullet)}(\W,\W')\lra \scE^{(p,\bullet)}(\W,\W'')\lra \scE^{(p,\bullet)}(\W',\W'')\lra 0
\end{equationth}
yielding an exact sequence
\begin{equationth}\label{lexactcd}
\cdots\lra H^{p,q-1}_{\bar\vt}(\W',\W'')\overset{\delta}\lra H^{p,q}_{\bar\vt}(\W,\W')
\lra H^{p,q}_{\bar\vt}(\W,\W'')
\lra H^{p,q}_{\bar\vt}(\W',\W'')\lra\cdots.
\end{equationth}





\begin{remark}\label{remalt}  We may use only ``alternating cochains'' in the above construction and the resulting
cohomology is canonically isomorphic with the one defined above.
\end{remark}

\paragraph{Some special cases\,:} {\bf I.} In the case $\W=\{X\}$, we have $(\scE^{(p,\bullet)}(\W),\bar\vt)=(\scE^{(p,\bullet)}(X),\bp)$ and
$H^{p,q}_{\bar\vt}(\W)=H^{p,q}_{\bp}(X)$.
\vv

\noindent
{\bf II.} In the case $\W$ consists of two open sets $W_{0}$ and $W_{1}$, we may write (cf. Remark \ref{remalt})
\[
\scE^{(p,q)}(\W)=C^{0}(\W,\scE^{(p,q)})\oplus C^{1}(\W,\scE^{(p,q-1)})=\scE^{(p,q)}(W_{0})\oplus  \scE^{(p,q)}(W_{1})\oplus \scE^{(p,q-1)}(W_{01}).
\]
Thus a cochain $\xi\in \scE^{(p,q)}(\W)$ is expressed as a triple $\xi=(\xi_{0},\xi_{1},\xi_{01})$
and the differential 
\[
\bar\vt:\scE^{(p,q)}(\W)\ra \scE^{p,q+1}(\W)\quad\text{is given by}\quad 
\bar\vt(\xi_{0},\xi_{1},\xi_{01})=(\bp\xi_{0},\bp\xi_{1},\xi_{1}-\xi_{0}-\bp\xi_{01}).
\]

If we set $Z^{p,q}(\W)=\op{Ker}\bar\vt^{p,q}$ and $B^{p,q}(\W)=\op{Im}\bar\vt^{p,q-1}$, then  by definition,
$H^{p,q}_{\bar\vt}(\W)=Z^{p,q}(\W)/B^{p,q}(\W)$. We may somewhat simplify the coboundary
group $B^{p,q}(\W)$ (cf. \cite{Su11})\,:

\begin{proposition} We have
\[
B^{p,q}(\W)=\{\,\xi\in \scE^{(p,q)}(\W)\mid \xi=(\bp\eta_{0},\bp\eta_{1},\eta_{1}-\eta_{0}),\
\text{for some}\ \eta_{i}\in \scE^{p,q-1}(W_{i}), i=0, 1\,\}.
\]
\end{proposition}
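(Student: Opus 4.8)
The plan is to prove the asserted equality by establishing the two inclusions, the nontrivial one being handled by a partition of unity subordinate to $\{W_{0},W_{1}\}$, which allows one to absorb the $\bp$-exact term appearing in the third component.

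The inclusion $\supseteq$ is immediate: given $\eta_{i}\in\scE^{(p,q-1)}(W_{i})$, one has $(\bp\eta_{0},\bp\eta_{1},\eta_{1}-\eta_{0})=\bar\vt(\eta_{0},\eta_{1},0)\in B^{p,q}(\W)$. For the inclusion $\subseteq$, take an arbitrary $\xi\in B^{p,q}(\W)$ and write it as $\xi=\bar\vt(\zeta_{0},\zeta_{1},\zeta_{01})=(\bp\zeta_{0},\bp\zeta_{1},\zeta_{1}-\zeta_{0}-\bp\zeta_{01})$ with $\zeta_{i}\in\scE^{(p,q-1)}(W_{i})$ and $\zeta_{01}\in\scE^{(p,q-2)}(W_{01})$. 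Since $X$ is paracompact, fix a $C^{\infty}$ partition of unity $\{\rho_{0},\rho_{1}\}$ subordinate to the covering $\{W_{0},W_{1}\}$, so that $\rho_{0}+\rho_{1}\equiv 1$ on $X$ and $\op{supp}(\rho_{i})\subset W_{i}$. First I would check that $\rho_{1}\zeta_{01}$, a priori only a form on $W_{01}$, extends by zero to an element of $\scE^{(p,q-2)}(W_{0})$: the set $W_{0}\cap\op{supp}(\rho_{1})$ is closed in $W_{0}$ and contained in $W_{0}\cap W_{1}=W_{01}$, so $\rho_{1}\zeta_{01}$ vanishes on the open set $W_{0}\ssm\op{supp}(\rho_{1})$ and hence glues with the zero form on $W_{0}\ssm W_{01}$ to a smooth form on all of $W_{0}$; symmetrically $\rho_{0}\zeta_{01}$ extends by zero to an element of $\scE^{(p,q-2)}(W_{1})$.

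With this in hand I would put
\[
\eta_{0}=\zeta_{0}+\bp(\rho_{1}\zeta_{01})\in\scE^{(p,q-1)}(W_{0}),\qquad \eta_{1}=\zeta_{1}-\bp(\rho_{0}\zeta_{01})\in\scE^{(p,q-1)}(W_{1}).
\]
Then $\bp\eta_{0}=\bp\zeta_{0}$ and $\bp\eta_{1}=\bp\zeta_{1}$ because $\bp\circ\bp=0$, while on $W_{01}$, using $\rho_{0}+\rho_{1}\equiv 1$,
\[
\eta_{1}-\eta_{0}=\zeta_{1}-\zeta_{0}-\bp(\rho_{0}\zeta_{01})-\bp(\rho_{1}\zeta_{01})=\zeta_{1}-\zeta_{0}-\bp\zeta_{01}.
\]
Hence $\xi=(\bp\eta_{0},\bp\eta_{1},\eta_{1}-\eta_{0})$ is of the stated form, proving $\subseteq$.

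The only nonroutine point is the extension-by-zero claim, and it is precisely there that the condition $\op{supp}(\rho_{i})\subset W_{i}$ (rather than merely $\{\rho_{i}\neq 0\}\subset W_{i}$) together with the standing paracompactness assumption on $X$ are used; everything else is a one-line manipulation with $\bp^{2}=0$. Note that for $q\le 1$ the group $\scE^{(p,q-2)}(W_{01})$ vanishes, so in that range the statement reduces to the definition of $B^{p,q}(\W)$.
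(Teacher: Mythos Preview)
Your proof is correct and is exactly the standard partition-of-unity argument one expects here; the paper itself does not spell out a proof but refers to \cite{Su11}, where the same idea is used. The key step---extending $\rho_{1}\zeta_{01}$ and $\rho_{0}\zeta_{01}$ by zero across $W_{0}$ and $W_{1}$ respectively, then absorbing $\bp\zeta_{01}$ into the modified $\eta_{i}$---is precisely the intended mechanism.
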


In the relative case, if we set $\W'=\{W_{0}\}$, then
\[
\scE^{(p,q)}(\W,\W')=\{\,\xi\in \scE^{(p,q)}(\W)\mid \xi_{0}=0\,\}= \scE^{(p,q)}(W_{1})\oplus \scE^{p,q-1}(W_{01}).
\]
Thus a cochain $\xi \in \scE^{(p,q)}(\W,\W')$ is expressed as a pair $\xi=(\xi_{1},\xi_{01})$ and the differential
\[
\bar\vt:\scE^{(p,q)}(\W,\W')\ra \scE^{p,q+1}(\W,\W')\quad\text{is given by}\quad \bar\vt(\xi_{1},\xi_{01})=(\bp\xi_{1},\xi_{1}-\bp\xi_{01}). 
\]
The $q$-th cohomology of $(\scE^{p,\bullet}(\W,\W'),\bar\vt)$ is $H^{p,q}_{\bar\vt}(\W,\W')$.

If we set $\W''=\emptyset$, then $H^{p,q-1}_{\bar\vt}(\W',\W'')=H^{p,q-1}_{\bar\vt}(\W')=H^{p,q-1}_{\bp}(W_{0})$ and the connecting morphism $\delta$ in (\ref{lexactcd}) assigns to the class of a $\bp$-closed form
$\xi_{0}$ on $W_{0}$ the class of $(0,-\xi_{0})$ (restricted to $W_{1}$) in $H^{p,q}_{\bar\vt}(\W,\W')$.

We discuss this case more in detail in the subsequent section.
\vv

\noindent
{\bf III.} Suppose $\W$ consists of three open sets $W_{0}$, $W_{1}$ and $W_{2}$ and set
$\W'=\{W_{0},W_{1}\}$ and $\W''=\{W_{0}\}$. Then
\[
\begin{aligned}
\scE^{(p,q)}(\W)&=\textstyle\bigoplus_{i=0}^{2}\scE^{(p,q)}(W_{i})\oplus\textstyle\bigoplus_{0\le i<j\le 2}
\scE^{(p,q-1)}(W_{ij})\oplus \scE^{(p,q-2)}(W_{012}),\\
\scE^{(p,q)}(\W,\W'')&=\textstyle\bigoplus_{i=1}^{2}\scE^{(p,q)}(W_{i})\oplus\textstyle\bigoplus_{0\le i<j\le 2}
\scE^{(p,q-1)}(W_{ij})\oplus \scE^{(p,q-2)}(W_{012}),\\
\scE^{(p,q)}(\W,\W')&=\scE^{(p,q)}(W_{2})\oplus
\scE^{(p,q-1)}(W_{02})\oplus
\scE^{(p,q-1)}(W_{12})\oplus \scE^{(p,q-2)}(W_{012}),\\
\scE^{(p,q)}(\W',\W'')&=\scE^{(p,q)}(W_{1})\oplus \scE^{(p,q-1)}(W_{01}).
\end{aligned}
\]
The connecting morphism $\delta$ in (\ref{lexactcd}) assigns to the class of 
$(\t_{1},\t_{01})$ in $H^{p.q-1}_{\bar\vt}(\W',\W'')$ the class of $(0,0,-\t_{1},\t_{01})$ (restricted to $W_{2}$) in $H^{p,q}_{\bar\vt}(\W,\W')$.

\paragraph{Canonical \iso s\,:}

We say that a covering $\W=\{W_\a\}$ of $X$ is  {\em Stein}, if   every non-empty finite intersection $W_{\a_0\dots\a_{q_{1}}}$ is a Stein \mfd.
In fact, for this it is sufficient
if each $W_{\a}$ is Stein (cf. \cite{GrR}, \cite{N}).
Note that every complex \mfd\ $X$
admits a Stein covering and that the Stein coverings are cofinal in the set of coverings of $X$. We quote\,:
\begin{theorem}[Oka-Cartan] For any coherent sheaf $\scS$ on a Stein \mfd\ $W$,
\[
H^{q}(W;\scS)=0\qquad\text{for}\ \ q\ge 1.
\]
\end{theorem}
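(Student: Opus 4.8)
The plan is to treat this as the classical Cartan Theorem B, reducing it as far as possible to machinery already available in the paper and then isolating the part that is irreducibly deep. First I would dispose of the case $\scS=\scO^{(p)}$, and hence of any finite free sheaf $\scO^{\oplus m}$. A Stein \mfd\ $W$ carries a smooth strictly plurisubharmonic exhaustion \fcn, and on such a \mfd\ the equation $\bp u=f$ is solvable for every $\bp$-closed $C^{\infty}$ $(p,q)$-form $f$ with $q\ge 1$ --- this is H\"ormander's $L^{2}$ estimate, or the Andreotti--Grauert exhaustion-and-bumping argument in a form closer to sheaf theory. Hence $H^{p,q}_{\bp}(W)=0$ for $q\ge 1$, and the canonical Dolbeault isomorphism (Theorem~\ref{canDol}) converts this into $H^{q}(W;\scO^{(p)})=0$ for $q\ge 1$; in particular $H^{q}(W;\scO^{\oplus m})=0$ for $q\ge 1$.

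Next I would pass from free sheaves to an arbitrary coherent $\scS$ by dimension shifting. Using the Oka--Cartan Theorem A (every coherent sheaf on a Stein \mfd\ is generated by its global sections) I get an exact sequence $0\to\scK_{1}\to\scO^{\oplus m_{0}}\to\scS\to 0$ with $\scK_{1}$ coherent, and iterating produces $0\to\scK_{j+1}\to\scO^{\oplus m_{j}}\to\scK_{j}\to 0$ for all $j\ge 0$ (with $\scK_{0}=\scS$). Feeding these into the long exact cohomology sequences and using $H^{q}(W;\scO^{\oplus m_{j}})=0$ for $q\ge 1$, I obtain $H^{q}(W;\scS)\simeq H^{q+1}(W;\scK_{1})\simeq\cdots\simeq H^{q+j}(W;\scK_{j})$ for every $j$. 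Since $W$ is second countable, hence paracompact of finite covering dimension, $H^{k}(W;-)$ vanishes for $k$ large (Grothendieck); choosing $j$ with $q+j$ beyond that range gives $H^{q}(W;\scS)=0$ for $q\ge 1$.

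The hard part is that both inputs are theorems of essentially the same depth as the statement itself. The solvability of $\bp$ on Stein \mfd s needs either the H\"ormander $L^{2}$ machinery or the Andreotti--Grauert bumping argument; and Theorem A, invoked above as if it were prior, is in the classical Oka--Cartan development proved \emph{together} with Theorem B by a bootstrap: one first establishes both for closed cuboids in $\C^{n}$ by induction on $n$ (via Oka's coherence theorems and the additive Cousin problem), then exhausts the Stein \mfd\ by relatively compact analytic blocks, and finally passes to the limit by an Oka--Weil approximation combined with a Mittag--Leffler/telescoping argument that kills the \v{C}ech coboundary obstruction. A genuinely self-contained treatment would have to carry out that bootstrap; in keeping with the fact that the paper says ``we quote'' here, I would instead cite it (e.g.\ \cite{GrR}, \cite{N}) and assemble the two inputs as above. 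Either way, the induction on dimension for cuboids is where the real obstacle sits.
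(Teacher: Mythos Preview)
The paper does not prove this theorem at all: it is introduced with the phrase ``We quote'' and stated without argument, the references \cite{GrR} and \cite{N} having already been given a few lines earlier for the basic properties of Stein manifolds. So there is nothing to compare your proposal against --- the paper's ``proof'' is a citation.

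Your sketch is a correct outline of one standard route to Cartan's Theorem~B (solve $\bp$ on Stein manifolds via $L^{2}$ methods to handle free sheaves, then shift dimension through a global free resolution obtained from Theorem~A), and you rightly flag that both inputs are of the same depth as the statement and that Theorems~A and~B are classically proved together by a bootstrap on cuboids plus exhaustion and approximation. Since you already observed that the paper merely quotes the result, the appropriate thing to do here is exactly what you suggest at the end: cite \cite{GrR} or \cite{N} and move on. Writing out the argument would be out of proportion with the paper's aims.
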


By the above and 
Theorem \ref{canDol}, we see that a Stein covering is good for $\scE^{(p,\bullet)}$ in the sense
of \cite[Section 3]{Su11}.
Thus in our case, we have\,:

\begin{theorem}\label{thsummary} We have the following canonical \iso s\,{\rm :}
\smallskip

\noindent
{\bf 1.}  For any covering $\W$,
\[
H^{p,q}_{\bp}(X)\overset\sim\lra H^{p,q}_{\bar\vt}(\W).
\]

\noindent
{\bf 2.} For a Stein covering $\W$,
\[
H^{p,q}_{\bar\vt}(\W,\W')\overset\sim\longleftarrow H^{q}(\W,\W';\scO^{(p)})\simeq H^{q}(X,X';\scO^{(p)}).
\]
\end{theorem}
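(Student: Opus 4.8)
The plan is to deduce Theorem~\ref{thsummary} by combining the abstract machinery recalled earlier with the two vanishing theorems (Oka--Cartan) and the relative Leray theorem. Part~1 is the absolute case: since $\scE^{(p,\bullet)}$ is a fine (hence soft, hence acyclic) resolution of $\scO^{(p)}$, the \v{C}ech--Dolbeault double complex $(C^{\bullet}(\W;\scE^{(p,\bullet)}),\check\delta,(-1)^{\bullet}\bp)$ has one edge computing $H^{p,q}_{\bp}(X)$ (from Special Case~I, the $\W=\{X\}$ reduction, together with the general fact that refining toward $\{X\}$ does not change the Dolbeault cohomology of a fine sheaf) and, after taking $\bp$-cohomology first, collapses by the Dolbeault--Grothendieck lemma to the \v{C}ech complex $C^{\bullet}(\W;\scO^{(p)})$. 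First I would invoke the spectral sequence of the double complex, or more elementarily the ``good covering'' statement from \cite[Section 3]{Su11}: because each $\scE^{(p,q)}$ is fine, the natural augmentation $\scE^{(p,\bullet)}(X)\to\scE^{(p,\bullet)}(\W)$ is a quasi-isomorphism for \emph{every} covering $\W$, which is exactly the asserted canonical isomorphism $H^{p,q}_{\bp}(X)\overset\sim\lra H^{p,q}_{\bar\vt}(\W)$. The orientation of the arrow (from the manifold-level cohomology into the \v{C}ech-level one) is the one induced by the inclusion of the column $q_1=0$, i.e. $\o\mapsto(\o|_{W_{\a}})_{\a}$.

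For Part~2 I would proceed in two independent steps corresponding to the two maps in the display. For the left isomorphism $H^{q}(\W,\W';\scO^{(p)})\overset\sim\lra H^{p,q}_{\bar\vt}(\W,\W')$, I would again use the double complex $(C^{\bullet}(\W,\W';\scE^{(p,\bullet)}),\check\delta,(-1)^{\bullet}\bp)$ whose associated single complex computes $H^{p,q}_{\bar\vt}(\W,\W')$ by Definition~\ref{defCD}. Filtering by $\bp$-degree and using the Dolbeault--Grothendieck lemma, which gives $0\to\scO^{(p)}\to\scE^{(p,0)}\to\scE^{(p,1)}\to\cdots$ exact, the $\bp$-cohomology of each fixed \v{C}ech degree $C^{q_1}(\W,\W';\scE^{(p,\bullet)})$ is concentrated in degree $0$ and equals $C^{q_1}(\W,\W';\scO^{(p)})$ — here one must check that taking sections which vanish on the $I'$-indexed intersections is exact in this resolution, which holds because the resolution is by \emph{fine} sheaves and the condition ``$\xi_{\a_0\dots\a_{q_1}}=0$ for all indices in $I'$'' is imposed componentwise. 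Hence the spectral sequence degenerates at $E_2$ and yields the stated canonical isomorphism with $H^{q}(\W,\W';\scO^{(p)})$. The right isomorphism $H^{q}(\W,\W';\scO^{(p)})\simeq H^{q}(X,X';\scO^{(p)})$ is then immediate from the Relative Leray Theorem~\ref{thleray}, whose hypothesis $H^{q_2}(W_{\a_0\dots\a_{q_1}};\scO^{(p)})=0$ for $q_2\ge 1$ is exactly what the Oka--Cartan theorem supplies once $\W$ is Stein (every finite intersection of Stein opens being Stein, and $\scO^{(p)}$ being coherent).

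The step I expect to require the most care is the compatibility/canonicity bookkeeping, rather than any single computation: one must verify that the isomorphism obtained from the double-complex spectral sequence in Part~2 is the \emph{same} canonical one that appears in the exact sequences \eqref{exactcomemb}, \eqref{lexactcd} and in Theorems~\ref{thdrel} and \ref{thgrD}, i.e. that all the edge maps (inclusion of the $q_1=0$ column, inclusion of the $q_2=0$ row, the augmentation $\scO^{(p)}\hookrightarrow\scE^{(p,0)}$, and the connecting homomorphisms of \eqref{sexactcomemb} and \eqref{sexactcd}) fit into one commuting diagram. The cleanest way to handle this is to phrase everything at the level of complexes: exhibit explicit quasi-isomorphisms $C^{\bullet}(\W,\W';\scO^{(p)})\to\scE^{(p,\bullet)}(\W,\W')\leftarrow \scE^{(p,\bullet)}(\W,\W')$ compatible with the short exact sequence \eqref{sexactcd} of the triple $(\W,\W',\emptyset)$, so that passing to cohomology automatically gives a map of long exact sequences; naturality in $\W$ (Stein coverings being cofinal) then upgrades the covering-dependent statement to the covering-free one $H^{q}(X,X';\scO^{(p)})$. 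No new ideas beyond \cite{Su11} are needed — the content is entirely that a Stein covering is good for $\scE^{(p,\bullet)}$, which was observed just before the statement — so the write-up is mainly a matter of quoting Theorems~\ref{canDol}, \ref{thleray}, the Oka--Cartan theorem, and the double-complex lemma of \cite[Section 3]{Su11} in the right order.
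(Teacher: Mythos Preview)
Your overall approach matches the paper's: both reduce to the observation (made just before the theorem) that a Stein covering is ``good'' for $\scE^{(p,\bullet)}$ in the sense of \cite{Su11}, and then invoke the double-complex machinery there together with the relative Leray theorem. The paper itself gives no further argument beyond this reference, and Remark~\ref{remcan} records exactly the inclusions of complexes you identify as inducing the maps.

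There is, however, a genuine slip in your justification of the left isomorphism in Part~2. You write that ``filtering by $\bp$-degree and using the Dolbeault--Grothendieck lemma \ldots\ the $\bp$-cohomology of each fixed \v{C}ech degree $C^{q_1}(\W,\W';\scE^{(p,\bullet)})$ is concentrated in degree~$0$.'' The Dolbeault--Grothendieck lemma is a \emph{sheaf-level} statement: it says the complex of sheaves $\scE^{(p,\bullet)}$ is a resolution of $\scO^{(p)}$. It does \emph{not} imply that the complex of sections $\scE^{(p,\bullet)}(W_{\a_0\dots\a_{q_1}})$ is exact in positive degree --- that would amount to $H^{p,q_2}_{\bp}(W_{\a_0\dots\a_{q_1}})=0$ for $q_2\ge 1$, which is false for a general open set. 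What makes this vanish is precisely the Stein hypothesis: each $W_{\a_0\dots\a_{q_1}}$ is Stein, so Oka--Cartan (combined with Theorem~\ref{canDol}) gives $H^{p,q_2}_{\bp}(W_{\a_0\dots\a_{q_1}})\simeq H^{q_2}(W_{\a_0\dots\a_{q_1}};\scO^{(p)})=0$. Your parenthetical ``which holds because the resolution is by fine sheaves'' compounds the confusion --- fineness is what drives the collapse in Part~1 (vanishing of \v{C}ech cohomology of $\scE^{(p,q_2)}$ in positive $q_1$), not here. You clearly know the Stein hypothesis is the point (your closing paragraph says so), but the argument as written invokes the wrong lemma at the crucial step; swap in Oka--Cartan and the proof is correct.
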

\begin{remark}\label{remcan} {\bf 1.} From 1 above we see that  $H^{p,q}_{\bar\vt}(\W)$ does not depend on the covering $\W$. The \iso\ there is induced from the inclusion of complexes\,:
\[
\scE^{(p,\bullet)}(X)\hra C^0(\W;\scE^{(p,\bullet)})\subset \scE^{(p,\bullet)}(\W).
\]

In particular,  if $W_{\a}=X$ for some $\a\in I$, it can be shown that 
the morphism $\scE^{(p,\bullet)}(\W)\ra\scE^{(p,\bullet)}(X)$ 
given by $\xi\mapsto\xi_{\a}$ induces  the inverse of the above \iso\ (cf. \cite{Su11}).
See also Propositions \ref{propinvtwo} and \ref{propinvthree} below.
\smallskip

\noindent
{\bf 2.} The first \iso\ in 2 above is induced from the inclusion of complexes\,:
\[
C^{\bullet}(\W,\W';\scO^{(p)})\hra C^{\bullet}(\W,\W';\scE^{(p,0)})\subset \scE^{(p,\bullet)}(\W,\W').
\]
 The second \iso\ follows from Theorem \ref{thleray}.
\end{remark}

From Theorem \ref{thsummary} we have\,:

\begin{corollary}\label{natisos}
If $\W$ is Stein,
there is a 
canonical \iso\,{\rm : }
\[
H^{p,q}_{\bp}(X)\simeq
H^{q}(\W;\scO^{(p)}).
\]
\end{corollary}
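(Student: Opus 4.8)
The plan is to obtain the isomorphism as the composite of the two canonical isomorphisms furnished by Theorem \ref{thsummary}, specialized to the absolute case $\W'=\emptyset$. Both $H^{p,q}_{\bp}(X)$ and $H^{q}(\W;\scO^{(p)})$ will be identified with the \v{C}ech--Dolbeault cohomology $H^{p,q}_{\bar\vt}(\W)$ of the covering $\W$, which serves as the intermediary.

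First I would apply Theorem \ref{thsummary}.1: for the given covering $\W$ (Stein or not) there is a canonical isomorphism $H^{p,q}_{\bp}(X)\overset{\sim}{\lra}H^{p,q}_{\bar\vt}(\W)$, induced, by Remark \ref{remcan}.1, from the inclusion of complexes $\scE^{(p,\bullet)}(X)\hookrightarrow C^{0}(\W;\scE^{(p,\bullet)})\subset\scE^{(p,\bullet)}(\W)$. Next I would specialize Theorem \ref{thsummary}.2 to $X'=\emptyset$, $\W'=\emptyset$: because $\W$ is Stein by hypothesis, this gives a canonical isomorphism $H^{q}(\W;\scO^{(p)})\overset{\sim}{\lra}H^{p,q}_{\bar\vt}(\W)$, induced, by Remark \ref{remcan}.2, from the inclusion $C^{\bullet}(\W;\scO^{(p)})\hookrightarrow C^{\bullet}(\W;\scE^{(p,0)})\subset\scE^{(p,\bullet)}(\W)$. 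Composing the first isomorphism with the inverse of the second yields the asserted canonical isomorphism $H^{p,q}_{\bp}(X)\simeq H^{q}(\W;\scO^{(p)})$, realized at the cochain level by the obvious zig-zag through $\scE^{(p,\bullet)}(\W)$.

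The statement is thus a formal consequence of Theorem \ref{thsummary}, and there is no serious obstacle. The only point meriting a word is that ``canonical'' is legitimate here: both inclusions into $\scE^{(p,\bullet)}(\W)$ above are natural in $X$ and compatible with refinements of the covering, so the composite depends on no choices (and in the case $W_\a=X$ for some $\a$ it reduces, via $\{X\}\subset\W$, to the canonical Dolbeault theorem, Theorem \ref{canDol}). The real content sits inside Theorem \ref{thsummary}: the hypothesis that $\W$ be Stein enters exactly in its second isomorphism, through the Oka--Cartan vanishing theorem — which guarantees that on each non-empty finite intersection $W_{\a_0\dots\a_{q_1}}$ the complex $\scE^{(p,\bullet)}$ resolves $\scO^{(p)}$ with no higher cohomology — together with Theorem \ref{canDol}.

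Should one prefer a self-contained argument bypassing Theorem \ref{thsummary}, I would run the two spectral sequences of the double complex $K^{q_1,q_2}=C^{q_1}(\W;\scE^{(p,q_2)})$ whose associated single complex is $(\scE^{(p,\bullet)}(\W),\bar\vt)$. Taking $\bp$-cohomology first, the $q_1$-th column computes $\prod_{\a_0\dots\a_{q_1}}H^{q_2}(W_{\a_0\dots\a_{q_1}};\scO^{(p)})$, which by the Oka--Cartan theorem is concentrated in degree $q_2=0$, where it equals $C^{q_1}(\W;\scO^{(p)})$; the remaining differential is $\check\delta$, so the abutment is $H^{q}(\W;\scO^{(p)})$. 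Taking $\check\delta$-cohomology first, the fineness of each $\scE^{(p,q_2)}$ on the paracompact space $X$, via a partition of unity subordinate to the locally finite covering $\W$, makes $\check{H}^{q_1}$ vanish for $q_1\geq 1$, leaving the column $\scE^{(p,\bullet)}(X)$ with differential $\bp$, so the abutment is $H^{p,q}_{\bp}(X)$. Comparing the two computations of $H^{p,q}_{\bar\vt}(\W)$ gives the isomorphism; the only bookkeeping to watch is the sign $(-1)^{q_1}$ in $\bar\vt=\check\delta+(-1)^{q_1}\bp$, which one tracks through the identifications.
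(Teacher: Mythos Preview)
Your proof is correct and matches the paper's approach exactly: the paper derives the corollary directly from Theorem \ref{thsummary} (stating only ``From Theorem \ref{thsummary} we have''), and your argument spells out precisely the composition of the two isomorphisms through $H^{p,q}_{\bar\vt}(\W)$ that this entails. Your additional spectral-sequence paragraph is a welcome elaboration but not required.
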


In the above, we think of a  Dolbeault cocycle $\o\in \scE^{(p,q)}(X)$  and a \v{C}ech cocycle $c\in C^{q}(\W;\scO^{(p)})$  as being \v{C}ech-Dolbeault cocycles, i.e., cocycles in $\scE^{(p,q)}(\W)$.
The classes $[\o]\in H^{q}_{\bp}(X)$ and $[c]\in H^{q}(\W;\scO^{(p)})$ correspond in the above \iso, \iff\ $\o$ and $c$ define the same class in 
$H^{q}_{\bar\vt}(\W)$, i.e., there exists a $(q-1)$-cochain $\chi\in \scE^{(p,q-1)}(\W)$
 \st
\[
\o-c=\bar\vt\chi.
\]
The above relation is rephrased as, for $\chi^{q_{1}}$ in 
$C^{q_{1}}(\W;\scE^{(p,q-q_{1}-1)})$, $0\le q_{1}\le q-1$,
\begin{equationth}\label{dRCcorr}
\begin{cases}
\o=\bp\chi^{0},\\
0= \check\delta\chi^{q_{1}-1}+(-1)^{q_{1}}\bp\chi^{q_{1}},\qquad 1\le q_{1}\le q-1\\
 -c=\check\delta\chi^{q-1}.
\end{cases}
\end{equationth}

Note that the composition of the \iso\ of Corollary \ref{natisos} and the  second \iso\  of
Theorem~\ref{thsummary}.\,2 for $X'=\emptyset$ is equal to the \iso\ in Theorem \ref{canDol}.

\begin{remark}\label{remcano} 
%
 It is possible to establish an \iso\
as in Corollary \ref{natisos} without introducing the \v{C}ech-Dolbeault cohomology,  
using the so-called Weil lemma instead. 
However this correspondence is different from the one in Corollary \ref{natisos}, the difference being the sign of $(-1)^{\frac{q(q+1)}2}$, see \cite[Section 3]{Su11} for details.

The seemingly standard proof  in the textbooks, e.g., 
\cite{GH}, \cite{Hir}, of the \iso\ as in Theorem \ref{canDol} or Corollary \ref{natisos}
gives a correspondence same as the one given by the Weil lemma. Thus there is a sign difference as above.
For example, in Theorem \ref{BMCauchy} below, the sign $(-1)^{\frac{n(n-1)} 2}$ does not appear this way
(cf. \cite{GH}, \cite{H2}).
\end{remark}


We finish this section by discussing the \iso\ of Theorem \ref{thsummary}.\,1 in some special cases.
Recall that it is induced by the inclusion $\scE^{(p,q)}(X)\hra C^0(\W;\scE^{(p,q)})\subset \scE^{(p,q)}(\W)$.

In the case $\W=\{W_{0},W_{1}\}$ (cf. the case II above),
\[
\scE^{(p,q)}(\W)=\scE^{(p,q)}(W_{0})\oplus \scE^{(p,q)}(W_{1})\oplus \scE^{(p,q-1)}(W_{01})
\]
and the inclusion is given by $\o\mapsto (\o|_{W_{0}},\o|_{W_{1}},0)$. 

\begin{proposition}\label{propinvtwo} In the case $\W=\{W_{0},W_{1}\}$, the inverse of the above \iso\ is given by
assigning to the class of $\xi$ the class of 
$\o=\rho_{0}\xi_{0}+\rho_{1}\xi_{1}-\bp\rho_{0}\wedge\xi_{01}$, where $(\rho_{0},\rho_{1})$ is 
a partiton of unity subordinate to $\W$.
\end{proposition}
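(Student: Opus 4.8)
The plan is to verify directly that the map $\Phi\colon \scE^{(p,\bullet)}(\W)\to\scE^{(p,\bullet)}(X)$ sending $\xi=(\xi_0,\xi_1,\xi_{01})\mapsto \rho_0\xi_0+\rho_1\xi_1-\bp\rho_0\wedge\xi_{01}$ is a morphism of complexes inducing a map on cohomology that is two-sided inverse to the \iso\ of Theorem~\ref{thsummary}.\,1. Note first that since $\rho_0+\rho_1\equiv 1$ on $X$, the expression $\rho_0\xi_0+\rho_1\xi_1$ is a globally defined form: on $W_{01}$ both $\xi_0$ and $\xi_1$ are defined, off $W_1$ we have $\rho_1\equiv 0$ near the relevant points so only the $\rho_0\xi_0$ term survives, and symmetrically off $W_0$; the term $\bp\rho_0\wedge\xi_{01}$ is supported in $W_{01}$ where $\xi_{01}$ lives (and extends by $0$ since $\bp\rho_0$ is supported in $W_{01}$, being $0$ where $\rho_0$ is locally constant), so $\Phi(\xi)\in\scE^{(p,q)}(X)$ is well-defined.

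The first substantive step is to check $\Phi\circ\bar\vt=\bp\circ\Phi$. Using $\bar\vt(\xi_0,\xi_1,\xi_{01})=(\bp\xi_0,\bp\xi_1,\xi_1-\xi_0-\bp\xi_{01})$ from case~II, one computes
\[
\Phi(\bar\vt\xi)=\rho_0\bp\xi_0+\rho_1\bp\xi_1-\bp\rho_0\wedge(\xi_1-\xi_0-\bp\xi_{01}),
\]
while on the other side
\[
\bp\Phi(\xi)=\bp\rho_0\wedge\xi_0+\rho_0\bp\xi_0+\bp\rho_1\wedge\xi_1+\rho_1\bp\xi_1-\bp\bp\rho_0\wedge\xi_{01}+\bp\rho_0\wedge\bp\xi_{01}.
\]
Since $\bp\rho_1=-\bp\rho_0$ and $\bp\bp\rho_0=0$, the difference collapses term by term, giving equality; this is the routine calculation I will not belabor. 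Hence $\Phi$ descends to $H^{p,q}_{\bar\vt}(\W)\to H^{p,q}_{\bp}(X)$.

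Next I verify that the composite $H^{p,q}_{\bp}(X)\to H^{p,q}_{\bar\vt}(\W)\xrightarrow{\Phi} H^{p,q}_{\bp}(X)$ is the identity. The first map sends $\o$ to the class of $(\o|_{W_0},\o|_{W_1},0)$, and $\Phi$ of that is $\rho_0\o+\rho_1\o-0=\o$ on the nose, so this composite is literally the identity on cochains. It then suffices to show the other composite is the identity on cohomology, or equivalently --- since the first arrow is already known to be an \iso\ by Theorem~\ref{thsummary}.\,1 --- that $\Phi$ is injective on cohomology, which follows formally from the above once we know both groups have the same ``size''; but cleaner is to exhibit directly, for a $\bar\vt$-cocycle $\xi=(\xi_0,\xi_1,\xi_{01})$, an explicit $(q-1)$-cochain $\chi\in\scE^{(p,q-1)}(\W)$ with $\xi-(\Phi(\xi)|_{W_0},\Phi(\xi)|_{W_1},0)=\bar\vt\chi$. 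The natural candidate is $\chi=(\chi_0,\chi_1,\chi_{01})$ with $\chi_0=-\rho_1\xi_{01}$, $\chi_1=\rho_0\xi_{01}$, $\chi_{01}=0$ (each defined on the appropriate set using that $\bp\rho_i$ is supported in $W_{01}$); plugging into $\bar\vt\chi=(\bp\chi_0,\bp\chi_1,\chi_1-\chi_0-\bp\chi_{01})$ and using the cocycle relations $\bp\xi_0=0$, $\bp\xi_1=0$, $\xi_1-\xi_0=\bp\xi_{01}$ should yield exactly the required difference. The main obstacle --- really the only place care is needed --- is bookkeeping the supports: checking that each of $\rho_i\xi_{01}$, $\bp\rho_i\wedge\xi_{01}$ extends by zero to a global form on the relevant open set, which rests on the elementary fact that $\bp\rho_0$ vanishes on a neighborhood of $X\ssm W_{01}$. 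Once the support discussion is in place, all identities are forced by $\rho_0+\rho_1=1$ and the Leibniz rule, and the proposition follows.
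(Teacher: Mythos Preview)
Your proof is correct. It is, however, considerably more elaborate than the paper's. The paper does not verify from scratch that $\Phi$ is a chain map or construct a chain homotopy; instead it quotes from \cite[Section~3]{Su11} that the inverse sends $[\xi]$ to the class of the form given on $W_1$ by $\xi_1-\bp(\rho_0\xi_{01})$, and then simply rewrites this expression using the cocycle relation $\xi_1-\xi_0=\bp\xi_{01}$ and $\rho_0+\rho_1=1$ to obtain the global formula $\rho_0\xi_0+\rho_1\xi_1-\bp\rho_0\wedge\xi_{01}$.

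Your route is self-contained where the paper's is not: you establish the inverse directly rather than importing it. Note also that your explicit homotopy $\chi=(-\rho_1\xi_{01},\rho_0\xi_{01},0)$, while correct, is strictly speaking unnecessary once you have shown $\Phi$ is a left inverse to a known isomorphism---from $g\circ f=\mathrm{id}$ with $f$ invertible one gets $g=f^{-1}$ immediately, with no appeal to ``size''. The homotopy is a nice bonus (it shows the two chain maps are actually chain-homotopic, not just equal on cohomology), but you could have stopped one paragraph earlier.
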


\begin{proof} 
Recall that  $\o$ is given by $\xi_{1}-\bp(\rho_{0}\xi_{01})$ on $W_{1}$ (cf. \cite[Section 3]{Su11}).
Using the 
the cocycle condition $\xi_{1}-\xi_{0}-\bp\xi_{01}=0$, it
can be written as  $\rho_{0}\xi_{0}+\rho_{1}\xi_{1}-\bp\rho_{0}\wedge\xi_{01}$, which is a global expression of $\o$.
\end{proof}

Likewise we may prove (cf. the case III above)\,:

\begin{proposition}\label{propinvthree} In the case $\W=\{W_{0},W_{1},W_{2}\}$, the inverse of the above \iso\ is given by
assigning to the class of $\xi$ the class of 
\[
\o=\sum_{i=0}^{2}\rho_{i}\xi_{i}+\sum_{0\le i<j\le 2}(\rho_{i}\bp\rho_{j}-\rho_{j}\bp\rho_{i})\wedge\xi_{ij}
+(\bp\rho_{0}\wedge\bp\rho_{1}+\bp\rho_{1}\wedge\bp\rho_{2})\wedge\xi_{012},
\]
where $\{\rho_{0},\rho_{1},\rho_{2}\}$ is a partition of unity subordinate to $\W$.
\end{proposition}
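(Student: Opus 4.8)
The plan is to follow the proof of Proposition~\ref{propinvtwo}, now carrying out \emph{two} descending rounds of the partition-of-unity homotopy by which \cite[Section 3]{Su11} inverts the isomorphism $H^{p,q}_{\bp}(X)\overset{\sim}{\ra}H^{p,q}_{\bar\vt}(\W)$. That homotopy is the \v{C}ech operator $(K\sigma)_{\a_{0}\dots\a_{q_{1}-1}}=\sum_{\b}\rho_{\b}\,\sigma_{\b\a_{0}\dots\a_{q_{1}-1}}$, applied from the top \v{C}ech degree downward. So, starting from a $\bar\vt$-cocycle $\xi=(\xi_{i},\xi_{ij},\xi_{012})$ on $\W=\{W_{0},W_{1},W_{2}\}$ --- which means $\bp\xi_{i}=0$, $\bp\xi_{ij}=\xi_{j}-\xi_{i}$ on $W_{ij}$ and $\bp\xi_{012}=-(\xi_{12}-\xi_{02}+\xi_{01})$ on $W_{012}$ --- I would first set $\chi^{1}=-K\xi_{012}$, i.e. (working with alternating cochains) $\chi^{1}_{01}=-\rho_{2}\xi_{012}$, $\chi^{1}_{02}=\rho_{1}\xi_{012}$, $\chi^{1}_{12}=-\rho_{0}\xi_{012}$, so that $\check\delta\chi^{1}=-\xi_{012}$. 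Next I would set $\chi^{0}_{i}=\sum_{\b}\rho_{\b}(-\xi_{\b i}+\bp\chi^{1}_{\b i})$ and $\o|_{W_{i}}=\xi_{i}+\bp\chi^{0}_{i}$; the homotopy identities for $K$ together with the cocycle relations then show, exactly as in the two-set case, that $\o$ is a well-defined $\bp$-closed form on $X$ and that $(\o|_{W_{i}})_{i}-\xi=\bar\vt(\chi^{0},\chi^{1})$ in $\scE^{(p,\bullet)}(\W)$. Hence $\o$ represents the class corresponding to $[\xi]$ under Theorem~\ref{thsummary}.\,1, and it only remains to identify $\o$ with the form in the statement.

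On $W_{2}$ one computes, the terms $\pm\rho_{0}\rho_{1}\,\bp\xi_{012}$ cancelling,
\[ \chi^{0}_{2}=-\rho_{0}\xi_{02}-\rho_{1}\xi_{12}+(\rho_{0}\bp\rho_{1}-\rho_{1}\bp\rho_{0})\wedge\xi_{012}, \]
and I would expand $\o|_{W_{2}}=\xi_{2}+\bp\chi^{0}_{2}$ by the Leibniz rule, feeding in the three cocycle relations and $\rho_{0}+\rho_{1}+\rho_{2}=1$, $\bp\rho_{0}+\bp\rho_{1}+\bp\rho_{2}=0$. One finds that $\xi_{2}-\rho_{0}\xi_{2}-\rho_{1}\xi_{2}$ collapses to $\rho_{2}\xi_{2}$, that differentiating $\rho_{0}\xi_{02}$ and $\rho_{1}\xi_{12}$ brings in $\rho_{0}\xi_{0}+\rho_{1}\xi_{1}$ besides $-\bp\rho_{0}\wedge\xi_{02}-\bp\rho_{1}\wedge\xi_{12}$, and that $\bp$ of the last term of $\chi^{0}_{2}$ brings in $2\,\bp\rho_{0}\wedge\bp\rho_{1}\wedge\xi_{012}$ besides $(\rho_{0}\bp\rho_{1}-\rho_{1}\bp\rho_{0})\wedge(\xi_{12}-\xi_{02}+\xi_{01})$. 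Collecting the coefficients of $\xi_{02}$, $\xi_{12}$ and $\xi_{012}$ and rewriting them by means of $\rho_{2}=1-\rho_{0}-\rho_{1}$, $\bp\rho_{2}=-\bp\rho_{0}-\bp\rho_{1}$ (and $2\,\bp\rho_{0}\wedge\bp\rho_{1}=\bp\rho_{0}\wedge\bp\rho_{1}+\bp\rho_{1}\wedge\bp\rho_{2}$ for the last one), $\o|_{W_{2}}$ is seen to equal the restriction to $W_{2}$ of
\[ \sum_{i=0}^{2}\rho_{i}\xi_{i}+\sum_{0\le i<j\le 2}(\rho_{i}\bp\rho_{j}-\rho_{j}\bp\rho_{i})\wedge\xi_{ij}+(\bp\rho_{0}\wedge\bp\rho_{1}+\bp\rho_{1}\wedge\bp\rho_{2})\wedge\xi_{012}. \]
Running the same computation with the indices relabelled --- or invoking the invariance of the recipe under relabelling of the covering --- gives the corresponding identities on $W_{0}$ and $W_{1}$, and these local expressions patch to the single global form above, which is therefore $\o$; this proves the proposition.

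As a sanity check I would also verify directly that the displayed expression really is global (independently of the homotopy): $\rho_{i}\xi_{i}$ extends by zero off $W_{i}$; $(\rho_{i}\bp\rho_{j}-\rho_{j}\bp\rho_{i})\wedge\xi_{ij}$ extends by zero off $W_{ij}$ since $\op{supp}\rho_{i}\cap\op{supp}\rho_{j}\subset W_{ij}$; and the coefficient $\bp\rho_{0}\wedge\bp\rho_{1}+\bp\rho_{1}\wedge\bp\rho_{2}$ of $\xi_{012}$ vanishes on $X\ssm W_{012}$, because off $W_{2}$ one has $\bp\rho_{2}=0$ and $\rho_{0}+\rho_{1}=1$, whence $\bp\rho_{0}\wedge\bp\rho_{1}=-\bp\rho_{1}\wedge\bp\rho_{1}=0$. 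The only genuine obstacle is the sign bookkeeping in this computation: one must keep straight the $(-1)^{q_{1}}\bp$ in $\bar\vt$, the alternating-cochain signs entering the two uses of $K$, and the Leibniz signs for $\bp$ of products of forms of mixed degree, and then check that the regrouped answer is the \emph{global} symmetric formula rather than merely a local expression valid near $W_{2}$. Conceptually nothing new is needed beyond the two-set case of Proposition~\ref{propinvtwo}.
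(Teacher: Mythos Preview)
Your proposal is correct and follows exactly the route the paper intends: the paper itself gives no argument beyond ``Likewise we may prove'', pointing back to Proposition~\ref{propinvtwo} and the partition-of-unity homotopy of \cite[Section~3]{Su11}. You have simply made that homotopy explicit for three open sets---two rounds of the operator $K$, then the rewriting of $\o|_{W_{2}}=\xi_{2}+\bp\chi^{0}_{2}$ into the symmetric global form via the cocycle relations and $\sum\rho_{i}=1$---and the computation checks out (in particular $2\,\bp\rho_{0}\wedge\bp\rho_{1}=\bp\rho_{0}\wedge\bp\rho_{1}+\bp\rho_{1}\wedge\bp\rho_{2}$ is the key identity for the $\xi_{012}$ coefficient). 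There is no difference in approach, only in the level of detail you supply.
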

\lsection{Relative Dolbeault cohomology}\label{secrDol}

We specialize the contents of  \cite[Section 4]{Su11} to our setting.

Let $X$ be a complex \mfd\  and $X'$  an open set in $X$. 
Letting $V_{0}=X'$ and $V_{1}$ a \nbd\ of the closed set $S=X\ssm X'$, consider the coverings $\V=\{V_{0},V_{1}\}$ and 
$\V'=\{V_{0}\}$ of $X$ and $X'$  (cf. the case II in Section \ref{secCD}). 
We have the cohomology $H^{p,q}_{\bar\vt}(\V,\V')$ as 
 the cohomology of the complex $(\scE^{(p,\bullet)}(\V,\V'),\bar\vt)$, where
\[
\scE^{(p,q)}(\V,\V')=\scE^{(p,q)}(V_{1})\oplus \scE^{(p,q-1)}(V_{01}),\qquad V_{01}=V_{0}\cap V_{1},
\]
and $\bar\vt:\scE^{(p,q)}(\V,\V')\ra \scE^{(p,q+1)}(\V,\V')$ is given by 
$\bar\vt(\xi_{1},\xi_{01})=(\bp\xi_{1},\xi_{1}-\bp\xi_{01})$.
Noting that $\scE^{(p,q)}(\{V_{0}\})=\scE^{(p,q)}(X')$,
we have the exact sequence
\begin{equationth}\label{sexactreld}
0\lra \scE^{(p,\bullet)}(\V,\V')\overset{j^{*}}\lra \scE^{(p,\bullet)}(\V)\overset{i^{*}}\lra \scE^{(p,\bullet)}(X')\lra 0,
\end{equationth}
where $j^{*}(\xi_{1},\xi_{01})=(0,\xi_{1},\xi_{01})$ and $i^{*}(\xi_{0},\xi_{1},\xi_{01})=\xi_{0}$.  This gives rise to the exact sequence
(cf. \eqref{lexactcd})
\begin{equationth}\label{lexactrelD}
\cdots\lra H^{p,q-1}_{\bp}(X')\overset{\delta}\lra H^{p,q}_{\bar\vt}(\V,\V')\overset{j^{*}}\lra H^{p,q}_{\bar\vt}(\V)\overset{i^{*}}\lra H^{p,q}_{\bp}(X')\lra\cdots,
\end{equationth}
where $\delta$ assigns to the class of $\t$ the class of $(0,-\t)$.

Now we consider the special case where $V_{1}=X$.
Thus, letting $V_{0}=X'$ and $V^{\star}_{1}=X$,
we  consider
the coverings $\V^{\star}=\{V_{0},V^{\star}_{1}\}$ and $\V'=\{V_{0}\}$ of $X$ and $X'$.

\begin{definition}\label{defrelcose} We denote $H^{p,q}_{\bar\vt}(\V^{\star},\V')$  by $H^{p,q}_{\bar\vt}(X,X')$ and call it the 
 {\em relative Dolbeault cohomology} of  $(X,X')$.
\end{definition}

In the case $X'=\emptyset$, it coincides with $H^{p,q}_{\bp}(X)$.
If we denote by $i:X'\hra X$ the inclusion, by construction we see that (cf. Subsection \ref{ssoemb})\,:
\begin{equationth}\label{rel=cmc}
\scE^{(p,\bullet)}(\V^{\star},\V')=\scE^{(p,\bullet)}(i)\quad\text{and}\quad H^{p,q}_{\bar\vt}(X,X')=H^{p,q}_{\bp}(i).
\end{equationth}

By Theorem \ref{thsummary}.\,1,
there is a canonical \iso\ 
$H^{p,q}_{\bp}(X)\overset\sim\ra H^{p,q}_{\bar\vt}(\V^{\star})$, which assigns to the class of $s$ the
class of $(s|_{X'},s,0)$ . Its inverse assigns to the class of $(\xi_{0},\xi_{1},\xi_{01})$ the class of $\xi_{1}$
(cf. Remark \ref{remcan}.\,1).
Thus from \eqref{lexactrelD} we have the  exact sequence
\begin{equationth}\label{lexactrelD2}
\cdots\lra H^{p,q-1}_{\bp}(X')\overset{\delta}\lra H^{p,q}_{\bar\vt}(X,X')\overset{j^{*}}\lra H^{p,q}_{\bp}(X)\overset{i^{*}}\lra H^{p,q}_{\bp}(X')\lra\cdots,
\end{equationth}
where $j^{*}$ assigns to the class of
$(\xi_{1},\xi_{01})$  the class of $\xi_{1}$ and $i^{*}$ assigns to the class of
$s$  the class of $s|_{X'}$. It coincides with the sequence \eqref{exactcomemb}, except $\delta=-\b^{*}$.

We have the following propositions (cf. \cite{Su11})\,:
\begin{proposition}\label{proptriplerd} For a triple $(X,X',X'')$, there is an exact sequence
\[
\cdots\lra H^{p,q-1}_{\bar\vt}(X',X'')\overset{\delta}\lra H^{p,q}_{\bar\vt}(X,X')\overset{j^{*}}\lra H^{p,q}_{\bar\vt}(X,X'')\overset{i^{*}}\lra H^{p,q}_{\bar\vt}(X',X'')\lra\cdots.
\]
\end{proposition}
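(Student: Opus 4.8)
The plan is to deduce the long exact sequence of the triple $(X,X',X'')$ from a short exact sequence of complexes, exactly as in the absolute case \eqref{sexactcd}--\eqref{lexactcd}. Recall from \eqref{rel=cmc} that $H^{p,q}_{\bar\vt}(X,X')=H^{p,q}_{\bp}(i)$ is the cohomology of the co-mapping cone $\scE^{(p,\bullet)}(i)=\scE^{(p,\bullet)}(X)\oplus\scE^{(p,\bullet-1)}(X')$, and similarly for the pairs $(X,X'')$ and $(X',X'')$. First I would write down the natural maps among these three complexes. Denoting by $i':X''\hra X'$, $i'':X''\hra X$ and $i:X'\hra X$ the inclusions, there is an obvious inclusion $\scE^{(p,\bullet)}(i)\hookrightarrow\scE^{(p,\bullet)}(i'')$ sending $(\omega,\theta)\in\scE^{(p,q)}(X)\oplus\scE^{(p,q-1)}(X')$ to $(\omega,\theta|_{X''})\in\scE^{(p,q)}(X)\oplus\scE^{(p,q-1)}(X'')$, and a surjection $\scE^{(p,\bullet)}(i'')\twoheadrightarrow\scE^{(p,\bullet)}(i')[-1]$ (or rather onto $\scE^{(p,\bullet-1)}(i')$ with a shift) sending $(\omega,\theta)$ to the pair built from $\omega|_{X'}$ and $\theta$.

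The key step is to identify the correct target of the surjection and to check exactness. Unwinding the definitions, the kernel of $\scE^{(p,\bullet)}(i'')\to\ ?$ should be precisely the image of $\scE^{(p,\bullet)}(i)$, which consists of those $(\omega,\theta)$ with $\theta\in\scE^{(p,q-1)}(X'')$ coming from a form on all of $X'$. The quotient is then $\scE^{(p,q-1)}(X')\big/\scE^{(p,q-1)}(X')$ paired appropriately — more precisely, the quotient complex is isomorphic to the co-mapping cone $\scE^{(p,\bullet)}(i')$ shifted by $[-1]$, i.e. $\bigl(\scE^{(p,\bullet-1)}(X')\oplus\scE^{(p,\bullet-2)}(X''),-\bp\bigr)$, whose cohomology in degree $q$ is $H^{p,q-1}_{\bar\vt}(X',X'')$. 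Thus we get a short exact sequence of complexes
\[
0\lra \scE^{(p,\bullet)}(i)\lra \scE^{(p,\bullet)}(i'')\lra \scE^{(p,\bullet)}(i')[-1]\lra 0,
\]
and the associated long exact sequence in cohomology is exactly the asserted one, with the connecting morphism $\delta$ raising degree by one as indicated. Alternatively, and perhaps more cleanly, one can realize all three as $\v{C}$ech--Dolbeault complexes $\scE^{(p,\bullet)}(\V^{\star},\V')$ for suitable pairs of coverings as in Definition \ref{defrelcose} and invoke \eqref{sexactcd}--\eqref{lexactcd} with the triple of coverings directly, taking $V_0=X''$, $V_1=X'$-neighborhood data, $V_1^{\star}=X$; this is the route suggested by the "case III" computation in Section \ref{secCD}, where the connecting map $\delta$ sending $(\theta_1,\theta_{01})$ to $(0,0,-\theta_1,\theta_{01})$ is written out explicitly.

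I expect the main obstacle to be bookkeeping: getting the shift and the sign of the differential $-\bp$ on the quotient complex right, and checking that the connecting homomorphism produced by the snake lemma genuinely agrees with the map $\delta$ of \eqref{lexactrelD2} (so that, e.g., the compatibility $\delta=-\b^*$ noted after \eqref{lexactrelD2} propagates correctly to the triple). Everything else — exactness of the three-term sequence of complexes, and the passage to the long exact cohomology sequence — is formal once the maps are pinned down. Since the statement is cited to \cite{Su11}, I would at this point simply refer to \cite[Section 4]{Su11} for the verification that the $\v{C}$ech--Dolbeault formalism specializes to give precisely this sequence, noting that it is the Dolbeault incarnation of the standard long exact sequence of a triple in the derived-category (co-mapping cone) language of Remark \ref{remBT}.\,2.
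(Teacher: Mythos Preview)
Your second approach --- using the three-open-set \v{C}ech--Dolbeault formalism from ``case III'' in Section~\ref{secCD}, with $W_{0}=X''$, $W_{1}=X'$, $W_{2}=X$, and invoking \eqref{sexactcd}--\eqref{lexactcd} together with Theorem~\ref{3.2rel}/Proposition~\ref{propuni} to identify $H^{p,q}_{\bar\vt}(\W,\W')$, $H^{p,q}_{\bar\vt}(\W,\W'')$, $H^{p,q}_{\bar\vt}(\W',\W'')$ with the three relative Dolbeault groups --- is correct and is exactly what the paper does (by referring to \cite{Su11}).

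Your first approach, however, has a genuine error. The map you call an ``obvious inclusion'',
\[
\scE^{(p,\bullet)}(i)\lra \scE^{(p,\bullet)}(i''),\qquad (\o,\t)\longmapsto (\o,\t|_{X''}),
\]
is \emph{not} injective: its kernel is $\{0\}\oplus\scE^{(p,\bullet-1)}(X',X'')$, the forms on $X'$ vanishing on $X''$. In fact, since $\scE^{(p,q)}$ is soft, restriction $\scE^{(p,q-1)}(X')\ra\scE^{(p,q-1)}(X'')$ is \emph{surjective}, so your map is onto and there is nothing left for the quotient. A quick sanity check on the summands shows the sequence you wrote cannot be short exact: stripping off the common factor $\scE^{(p,q)}(X)$ would force
\[
0\lra \scE^{(p,q-1)}(X')\lra \scE^{(p,q-1)}(X'')\lra \scE^{(p,q-1)}(X')\oplus\scE^{(p,q-2)}(X'')\lra 0,
\]
which is impossible. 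The two-term co-mapping cones for the three pairs simply do not fit into a short exact sequence of complexes; one needs the extra ``$012$'' slot present in the three-open-set \v{C}ech--Dolbeault complex $\scE^{(p,\bullet)}(\W,\W')$ (cf.\ the explicit description in case~III, where $\delta$ sends $(\t_{1},\t_{01})$ to $(0,0,-\t_{1},\t_{01})$). So drop the first approach and keep the second.
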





Let $\V=\{V_{0},V_{1}\}$ be as in the beginning of this section, with $V_{1}$ an arbitrary
open set containing $X\ssm X'$. By Theorem~\ref{thsummary}.\,1, there is a canonical \iso\ $H^{p,q}_{D_{\scK}}(\V)\simeq H^{p,q}_{\bp}(X)$ and in
\eqref{lexactrelD}, 
$j^{*}$ assigns to the class of
$(\xi_{1},\xi_{01})$ the class of  $(0,\xi_{1},\xi_{01})$ or the class of $\rho_{1}\xi_{1}-\bp\rho_{0}\wedge\xi_{01}$ 
(or the class of $\xi_{1}$ if $V_{1}=X$) (cf. Proposition \ref{propinvtwo}, also Remark \ref{remcan}.\,1).

\begin{proposition}\label{propuni} 
The restriction $\scE^{(p,\bullet)}(\V^{\star},\V')\ra \scE^{(p,\bullet)}(\V,\V')$ induces an \iso
\[
H_{\bar\vt}^{p,q}(X,X')\overset\sim\lra H_{\bar\vt}^{p,q}(\V,\V').
\]
\end{proposition}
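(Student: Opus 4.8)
The plan is to compare, via the five lemma, the long exact sequence \eqref{lexactrelD} attached to the pair $(\V^\star,\V')$ with the one attached to $(\V,\V')$. First I would note that the construction yielding the short exact sequence \eqref{sexactreld} for $(\V,\V')$ applies verbatim, with $V_1$ replaced by $V_1^\star=X$, to the pair $(\V^\star,\V')$, and that the restriction maps
\[
\scE^{(p,\bullet)}(\V^\star,\V')\lra\scE^{(p,\bullet)}(\V,\V'),\qquad \scE^{(p,\bullet)}(\V^\star)\lra\scE^{(p,\bullet)}(\V),
\]
together with the identity of $\scE^{(p,\bullet)}(X')$, constitute a morphism of short exact sequences of complexes. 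This is immediate from the definitions: in both rows $j^*$ is the inclusion $(\xi_1,\xi_{01})\mapsto(0,\xi_1,\xi_{01})$ of a direct summand and $i^*$ the projection $(\xi_0,\xi_1,\xi_{01})\mapsto\xi_0$, and since the open set $V_0=X'$ is common to $\V^\star$ and $\V$, restriction leaves the $\xi_0$-component untouched and commutes with both $j^*$ and $i^*$. (That restriction commutes with $\bar\vt$ is clear, $\bar\vt$ being built from $\bp$ and the restriction of forms.) Passing to cohomology yields a commutative ladder of long exact sequences, the vertical arrows being the maps induced by restriction.

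Next I would identify these vertical arrows away from the term $H^{p,q}_{\bar\vt}(X,X')=H^{p,q}_{\bar\vt}(\V^\star,\V')$. On each copy of $H^{p,q}_{\bp}(X')$ the arrow is the identity, by construction. On $H^{p,q}_{\bar\vt}(\V^\star)$ and $H^{p,q}_{\bar\vt}(\V)$, Theorem~\ref{thsummary}.\,1 provides canonical isomorphisms onto $H^{p,q}_{\bp}(X)$, induced by the inclusions $\scE^{(p,\bullet)}(X)\hookrightarrow\scE^{(p,\bullet)}(\V^\star)$ and $\scE^{(p,\bullet)}(X)\hookrightarrow\scE^{(p,\bullet)}(\V)$ (cf. Remark~\ref{remcan}.\,1). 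Since $\omega\mapsto(\omega|_{X'},\omega,0)$ followed by restriction to $\V$ is exactly $\omega\mapsto(\omega|_{X'},\omega|_{V_1},0)$, these two inclusions are intertwined by $\scE^{(p,\bullet)}(\V^\star)\to\scE^{(p,\bullet)}(\V)$, so the induced map $H^{p,q}_{\bar\vt}(\V^\star)\to H^{p,q}_{\bar\vt}(\V)$ corresponds to the identity of $H^{p,q}_{\bp}(X)$ under these identifications; in particular it is an isomorphism.

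Finally, at the position of $H^{p,q}_{\bar\vt}(\V^\star,\V')$ in the ladder the other four consecutive vertical arrows are two copies of the identity of $H^{p,\bullet}_{\bp}(X')$ and two copies of the isomorphism $H^{p,\bullet}_{\bar\vt}(\V^\star)\overset{\sim}{\to}H^{p,\bullet}_{\bar\vt}(\V)$, so the five lemma forces $H^{p,q}_{\bar\vt}(X,X')\to H^{p,q}_{\bar\vt}(\V,\V')$ to be an isomorphism, which is the assertion. I do not anticipate a genuine obstacle; the only point needing care is checking that the ladder truly commutes and that the middle vertical arrow on the absolute complexes is indeed the canonical identification of Theorem~\ref{thsummary}.\,1, for which one uses the explicit description of that isomorphism recalled in Remark~\ref{remcan}.\,1 and Proposition~\ref{propinvtwo}. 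A more hands-on alternative would be to argue directly, with a partition of unity $(\rho_0,\rho_1)$ subordinate to $\V$, that every $\bar\vt$-cocycle $(\xi_1,\xi_{01})$ on $(\V,\V')$, together with any coboundary relation among such, can be lifted to $(\V^\star,\V')$; but the five-lemma route is cleaner.
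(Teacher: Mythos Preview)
Your argument is correct. The paper itself does not supply a proof of this proposition, referring instead to \cite{Su11}; your five-lemma comparison of the long exact sequences \eqref{lexactrelD} attached to $(\V^{\star},\V')$ and to $(\V,\V')$, with the identification $H^{p,q}_{\bar\vt}(\V^{\star})\simeq H^{p,q}_{\bp}(X)\simeq H^{p,q}_{\bar\vt}(\V)$ from Theorem~\ref{thsummary}.\,1, is the standard and expected route.
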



\begin{corollary}\label{corunique} The cohomology $H_{\bar\vt}^{p,q}(\V,\V')$ is uniquely determined modulo canonical \iso s, independently
of the choice of $V_{1}$.
\end{corollary}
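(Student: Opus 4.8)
The plan is to obtain the corollary as a formal consequence of Proposition~\ref{propuni}. Observe first that the object $H_{\bar\vt}^{p,q}(X,X')=H_{\bar\vt}^{p,q}(\V^{\star},\V')$ of Definition~\ref{defrelcose} involves no choice of a \nbd\ of $S=X\ssm X'$: it is built from the canonically defined covering $\V^{\star}=\{X',X\}$ of $X$. Now let $V_{1}$ be \emph{any} open \nbd\ of $S$ and put $\V=\{X',V_{1}\}$; since $X'\cup V_{1}=X$ this is a covering to which Proposition~\ref{propuni} applies, and it yields a canonical \iso\ $r_{V_{1}}:H_{\bar\vt}^{p,q}(X,X')\overset\sim\lra H_{\bar\vt}^{p,q}(\V,\V')$, induced by the componentwise restriction of forms $\scE^{(p,\bullet)}(\V^{\star},\V')\to\scE^{(p,\bullet)}(\V,\V')$. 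For two such \nbd s $V_{1}$ and $\tilde V_{1}$ one then takes $r_{\tilde V_{1}}\circ r_{V_{1}}^{-1}$ as the comparison \iso; these comparison maps are the identity when $\tilde V_{1}=V_{1}$ and satisfy the evident transitivity relation, so they do exhibit $H_{\bar\vt}^{p,q}(\V,\V')$ as independent of $V_{1}$ up to canonical \iso.

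To see that these comparison maps deserve to be called canonical, I would check that for nested \nbd s $\tilde V_{1}\subset V_{1}$ the map $r_{\tilde V_{1}}\circ r_{V_{1}}^{-1}$ is simply the \iso\ induced by the componentwise restriction $\scE^{(p,\bullet)}(\V,\V')\to\scE^{(p,\bullet)}(\tilde\V,\tilde\V')$. Indeed all the restriction morphisms in sight act componentwise by restriction of differential forms, so the two successive ones $\scE^{(p,\bullet)}(\V^{\star},\V')\to\scE^{(p,\bullet)}(\V,\V')\to\scE^{(p,\bullet)}(\tilde\V,\tilde\V')$ compose, on the nose, to $\scE^{(p,\bullet)}(\V^{\star},\V')\to\scE^{(p,\bullet)}(\tilde\V,\tilde\V')$; passing to cohomology, $r_{\tilde V_{1}}$ equals the middle map precomposed with $r_{V_{1}}$, and since $r_{V_{1}}$ and $r_{\tilde V_{1}}$ are \iso s by Proposition~\ref{propuni} the middle map is an \iso\ equal to $r_{\tilde V_{1}}\circ r_{V_{1}}^{-1}$. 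For arbitrary $V_{1}$ and $\tilde V_{1}$ one routes through $V_{1}\cap\tilde V_{1}$, which is again an open \nbd\ of the closed set $S$ since $S\subset V_{1}$ and $S\subset\tilde V_{1}$, and applies the nested case twice.

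The substance of the statement is entirely contained in Proposition~\ref{propuni}, which I am assuming; everything afterwards is a formal diagram chase. The only point needing (routine) attention is that the squares of restriction morphisms between the complexes $\scE^{(p,\bullet)}(\cdot,\cdot)$ commute strictly, not merely up to homotopy — which is immediate once one notes that these morphisms are componentwise restriction of forms and that the structure maps $j^{*}$, $i^{*}$ are the obvious insertions and projections of components — so that the induced maps on cohomology compose correctly. There is thus no real obstacle beyond Proposition~\ref{propuni} itself.
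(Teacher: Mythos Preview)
Your argument is correct and is exactly the approach the paper intends: the corollary is stated without proof immediately after Proposition~\ref{propuni}, and the formal deduction you give---comparing any two choices of $V_{1}$ via the canonical object $H_{\bar\vt}^{p,q}(X,X')$ through the restriction isomorphisms of Proposition~\ref{propuni}---is precisely what is implicit in calling it a corollary. You have simply spelled out the routine details (transitivity, the nested case, routing through intersections) that the paper leaves to the reader.
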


\begin{remark} This freedom of choice of $V_{1}$ is one of the advantages of expressing 
$H^{p,q}_{\bp}(i)$ as $H_{\bar\vt}^{p,q}(X,X')$.
\end{remark}


\begin{proposition}[Excision]\label{excision} Let $S$ be a closed set in $X$. Then, for any open set $V$ in $X$ containing $S$, there is a canonical \iso
\[
H^{p,q}_{\bar\vt}(X,X\ssm S)\overset{\sim}{\lra} H^{p,q}_{\bar\vt}(V,V\ssm S).
\]
\end{proposition}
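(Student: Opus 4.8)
The plan is to reduce the excision statement to the freedom in the choice of neighbourhood already encoded in Corollary~\ref{corunique}, which itself follows from Proposition~\ref{propuni}. First I would record that $X' := X\ssm S$ and $X'':= V\ssm S$ are open subsets of $X$ and of $V$ respectively, with $X\ssm X' = S = V\ssm X''$. By Definition~\ref{defrelcose}, $H^{p,q}_{\bar\vt}(X,X\ssm S)$ is computed from the covering $\V^{\star}=\{X\ssm S,\,X\}$ of $X$, while $H^{p,q}_{\bar\vt}(V,V\ssm S)$ is computed from $\W^{\star}=\{V\ssm S,\,V\}$ of $V$. Neither of these two ``star'' coverings obviously maps to the other, so the first real step is to interpolate. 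Since $V$ is an open set containing $S$, the pair $\V=\{V_{0},V_{1}\}$ with $V_{0}=X\ssm S$ and $V_{1}=V$ is exactly a covering of $X$ of the type considered in the beginning of Section~\ref{secrDol} (here $V_{1}$ is an arbitrary open set containing $X\ssm X'=S$). Hence Proposition~\ref{propuni} applies and gives a canonical \iso\ $H^{p,q}_{\bar\vt}(X,X\ssm S)\overset\sim\lra H^{p,q}_{\bar\vt}(\V,\V')$ with $\V'=\{V_{0}\}$.

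The key observation is then that the complex $\scE^{(p,\bullet)}(\V,\V')$ depends only on the open sets $V_{0}=X\ssm S$, $V_{1}=V$ and their intersection $V_{01}=V_{0}\cap V_{1}=V\ssm S$, and not on the ambient \mfd\ $X$ at all: by the formula recalled at the start of Section~\ref{secrDol},
\[
\scE^{(p,q)}(\V,\V')=\scE^{(p,q)}(V_{1})\oplus \scE^{(p,q-1)}(V_{01})=\scE^{(p,q)}(V)\oplus \scE^{(p,q-1)}(V\ssm S),
\]
with differential $\bar\vt(\xi_{1},\xi_{01})=(\bp\xi_{1},\xi_{1}-\bp\xi_{01})$. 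But this is literally the complex $\scE^{(p,\bullet)}(\W,\W')$ attached to the covering $\W=\{V\ssm S,\,V\}$ of $V$ and $\W'=\{V\ssm S\}$ of $V\ssm S$. Applying Proposition~\ref{propuni} a second time, now with $V$ in place of $X$, $V\ssm S$ in place of $X'$, and the open set $V$ itself containing $S$, we get a canonical \iso\ $H^{p,q}_{\bar\vt}(V,V\ssm S)\overset\sim\lra H^{p,q}_{\bar\vt}(\W,\W')$. Composing the first \iso, the equality of complexes $\scE^{(p,\bullet)}(\V,\V')=\scE^{(p,\bullet)}(\W,\W')$, and the inverse of the second \iso\ yields the desired canonical \iso\ $H^{p,q}_{\bar\vt}(X,X\ssm S)\overset{\sim}{\lra}H^{p,q}_{\bar\vt}(V,V\ssm S)$.

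The only point requiring a little care — and the step I expect to be the main (minor) obstacle — is checking that the two applications of Proposition~\ref{propuni} are compatible, i.e.\ that the composite \iso\ is induced by a genuine morphism of complexes and is ``the'' canonical one, not merely an abstract \iso. Concretely, one must verify that the restriction map $\scE^{(p,\bullet)}(\V^{\star},\V')\ra\scE^{(p,\bullet)}(\V,\V')$ used in Proposition~\ref{propuni} (sending $(\xi_{1},\xi_{01})$ with $\xi_{1}\in\scE^{(p,\bullet)}(X)$ to its restriction to $V$, and $\xi_{01}\in\scE^{(p,\bullet)}(X\ssm S)$ to its restriction to $V\ssm S$) is a quasi-\iso, which is precisely the content of that proposition, and then that the two target complexes coincide on the nose — which is the display above and needs no argument beyond unwinding definitions. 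Since each arrow in the chain is canonical in the sense of \cite{Su11}, the composite is too, and the naturality with respect to further shrinking of $V$ (should it be needed elsewhere) follows from functoriality of restriction. This completes the plan.
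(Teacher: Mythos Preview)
Your argument is correct and is exactly the route the paper has in mind: the paper does not spell out a proof of excision but groups it together with Proposition~\ref{propuni} and Corollary~\ref{corunique} (all deferred to \cite{Su11}), and the intended argument is precisely to take $V_{1}=V$ in Proposition~\ref{propuni} and observe that the resulting complex $\scE^{(p,\bullet)}(\V,\V')=\scE^{(p,q)}(V)\oplus\scE^{(p,q-1)}(V\ssm S)$ is literally the defining complex for $H^{p,q}_{\bar\vt}(V,V\ssm S)$. Your ``second application'' of Proposition~\ref{propuni} is in fact the identity, since $\W=\{V\ssm S,V\}$ already \emph{is} the star covering $\W^{\star}$ for the pair $(V,V\ssm S)$; so that step can be dropped and the proof shortened accordingly.
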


Now we indicate an alternative proof of Theorem \ref{thdrel} and refer to \cite{Su11} for details.
Let $\W=\{W_{\a}\}_{\a\in I}$ be a covering of $X$ and $\W'=\{W_{\a}\}_{\a\in I'}$ a covering of $X'$, $I'\subset I$.
Letting $V^{\star}_{1}=X$ as before, we define a morphism
\[
\varphi:\scE^{(p,q)}(\V^{\star},\V')\lra C^{0}(\W,\W';\scE^{(p,q)})\oplus C^{1}(\W,\W';\scE^{(p,q-1)})\subset \scE^{(p,q)}(\W,\W')
\]
by setting, for $\xi=(\xi_{1},\xi_{01})$,
\[
\varphi(\xi)_{\a}=\begin{cases} 0\quad &\a\in I'\\
                                                           \xi_{1}|_{W_{\a}}& \a\in I\ssm I',
\end{cases}\qquad\quad
\varphi(\xi)_{\a\b}=\begin{cases} \xi_{01}|_{W_{\a\b}}\quad &\a\in I', \  \b\in I\ssm I'\\
-\xi_{01}|_{W_{\a\b}}\quad &\a\in I\ssm I', \  \b\in I'\\
                                                           0& \text{otherwise}.
\end{cases}
\]

\begin{theorem}\label{3.2rel} 
The above  morphism $\varphi$ 
induces an \iso
\[
H^{p,q}_{\bar\vt}(X,X')\overset{\sim}{\lra}  H^{p,q}_{\bar\vt}(\W,\W').
\]
\end{theorem}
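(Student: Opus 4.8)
The plan is to show that $\varphi$ is a morphism of complexes which is a quasi-isomorphism, by embedding it in a morphism of short exact sequences and invoking the results already established. First I would verify that $\varphi$ commutes with the differentials: using the explicit formula $\bar\vt(\xi_1,\xi_{01})=(\bp\xi_1,\xi_1-\bp\xi_{01})$ on the source and the \v{C}ech-Dolbeault differential \eqref{cdrdiff} on the target, one checks that $\varphi(\bar\vt\xi)^{\a}=(\bar\vt\varphi(\xi))^{\a}$ and $\varphi(\bar\vt\xi)_{\a\b}=(\bar\vt\varphi(\xi))_{\a\b}$; the only higher \v{C}ech components of $\varphi(\xi)$ vanish, and the sign conventions in the definition of $\varphi(\xi)_{\a\b}$ (the asymmetry between $I'$ and $I\ssm I'$) are precisely what is needed for $\check\delta$ to match. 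This is a routine but sign-sensitive computation.

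Next I would fit $\varphi$ into a commutative ladder of short exact sequences. On the source side we have \eqref{sexactreld} (with $V^\star_1=X$), namely
\[
0\lra \scE^{(p,\bullet)}(\V^{\star},\V')\overset{j^{*}}\lra \scE^{(p,\bullet)}(\V^{\star})\overset{i^{*}}\lra \scE^{(p,\bullet)}(X')\lra 0,
\]
and on the target side the analogous sequence \eqref{sexactcd} for the triple $(\W,\W',\emptyset)$, namely
\[
0\lra \scE^{(p,\bullet)}(\W,\W')\lra \scE^{(p,\bullet)}(\W)\lra \scE^{(p,\bullet)}(\W')\lra 0.
\]
The map $\varphi$ on the left is connected on the middle term by the standard inclusion $\scE^{(p,\bullet)}(\V^{\star})\hra \scE^{(p,\bullet)}(\W)$ realizing the \iso\ of Theorem \ref{thsummary}.\,1 (here I use that $V^\star_1=X$ is a member of $\V^\star$, so by Remark \ref{remcan}.\,1 the class of $(\xi_0,\xi_1,\xi_{01})$ maps to the class of $\xi_1$, compatibly with $\varphi$), and on the right by the corresponding inclusion $\scE^{(p,\bullet)}(X')\hra\scE^{(p,\bullet)}(\W')$. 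I would check that both squares commute at the cochain level; the right square reduces to the naturality of the restriction-to-$X'$ maps, and the left square is what the explicit formula for $\varphi$ was designed to make commute.

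Then I would pass to the long exact cohomology sequences and apply the five lemma. By Theorem \ref{thsummary}.\,1 the middle vertical map induces an \iso\ $H^{p,q}_{\bp}(X)\overset\sim\to H^{p,q}_{\bar\vt}(\W)$, and the same theorem applied to $X'$ with the covering $\W'$ gives that the right vertical map induces an \iso\ $H^{p,q}_{\bp}(X')\overset\sim\to H^{p,q}_{\bar\vt}(\W')$; the connecting homomorphisms agree by naturality of the snake lemma. Hence $\varphi$ induces an \iso\ $H^{p,q}_{\bar\vt}(X,X')\overset\sim\to H^{p,q}_{\bar\vt}(\W,\W')$ in every degree. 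The main obstacle is not conceptual but bookkeeping: getting the signs in $\varphi(\xi)_{\a\b}$ to be consistent simultaneously with $\bar\vt$ on the source, $\bar\vt=\check\delta+(-1)^{q_1}\bp$ on the target, and the identification of $\scE^{(p,\bullet)}(\{V_0\})$ with $\scE^{(p,\bullet)}(X')$ in \eqref{sexactreld}, so that both squares of the ladder genuinely commute rather than commute up to sign. Once that is pinned down, the five-lemma argument is immediate.
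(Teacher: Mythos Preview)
The paper does not give a proof of this theorem in-text; it defines $\varphi$ and states the result, explicitly referring to \cite{Su11} for the details. Your five-lemma strategy is the natural one and is almost certainly the argument intended there: compare the short exact sequence \eqref{sexactreld} for $(\V^{\star},\V')$ with the sequence \eqref{sexactcd} for $(\W,\W',\emptyset)$ and use Theorem \ref{thsummary}.\,1 on the middle and right.

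One point needs sharpening. There is no ``standard inclusion'' $\scE^{(p,\bullet)}(\V^{\star})\hookrightarrow\scE^{(p,\bullet)}(\W)$, and the composite $\xi\mapsto\xi_{1}\mapsto(\xi_{1}|_{W_{\a}})_{\a}$ you allude to via Remark \ref{remcan}.\,1 does \emph{not} make the left square commute at the cochain level (it sends $(0,\xi_{1},\xi_{01})$ to a cochain with $\xi_{1}|_{W_{\a}}$ for \emph{every} $\a$, whereas $\varphi(\xi_{1},\xi_{01})_{\a}=0$ for $\a\in I'$). The correct middle map is the refinement pullback associated to $r:I\to\{0,1\}$, $r(\a)=0$ for $\a\in I'$ and $r(\a)=1$ for $\a\in I\ssm I'$, which is legitimate since $W_{\a}\subset V_{0}=X'$ for $\a\in I'$ and $W_{\a}\subset V^{\star}_{1}=X$ always. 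Using the alternating convention (Remark \ref{remalt}) one checks that $r^{*}$ restricted to $\scE^{(p,\bullet)}(\V^{\star},\V')$ is exactly $\varphi$, and that $r^{*}$ followed by the quotient to $\scE^{(p,\bullet)}(\W')$ agrees with $i^{*}$ followed by the inclusion $\scE^{(p,\bullet)}(X')\hookrightarrow\scE^{(p,\bullet)}(\W')$. Since $r^{*}$ intertwines the two inclusions of $\scE^{(p,\bullet)}(X)$ coming from Theorem \ref{thsummary}.\,1, it induces an isomorphism on cohomology, and then the five lemma finishes the proof as you outline.
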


Using the above we have an alternative proof of the  relative Dolbeault theorem
(Theorem \ref{thdrel})\,:

\begin{theorem}\label{thdrel2} 
There is a canonical \iso\,{\rm : }
\[
H^{p,q}_{\bar\vt}(X,X')\simeq H^{q}(X,X';\scO^{(p)}).
\]
\end{theorem}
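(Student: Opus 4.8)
The plan is to deduce Theorem~\ref{thdrel2} by chaining together the canonical isomorphisms already established in the excerpt, so that essentially no new computation is required. First I would fix a Stein covering $\W=\{W_\a\}_{\a\in I}$ of $X$ such that $\W'=\{W_\a\}_{\a\in I'}$ is a covering of $X'$ for some $I'\subset I$; such a covering exists because Stein coverings are cofinal and we may refine so that the members indexed by $I'$ cover $X'$. The point of Steinness is that, by the Oka--Cartan theorem, $H^{q_2}(W_{\a_0\dots\a_{q_1}};\scO^{(p)})=0$ for all $q_1\ge 0$, $q_2\ge 1$, so the hypothesis of the Relative Leray theorem (Theorem~\ref{thleray}) is met with $\scS=\scO^{(p)}$.

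Next I would assemble the three links in the chain. Link one is Theorem~\ref{3.2rel}: the morphism $\varphi$ induces a canonical isomorphism $H^{p,q}_{\bar\vt}(X,X')\xrightarrow{\ \sim\ }H^{p,q}_{\bar\vt}(\W,\W')$, valid for \emph{any} such pair of coverings, in particular the Stein one just chosen. Link two is Theorem~\ref{thsummary}.\,2, whose first arrow $H^{q}(\W,\W';\scO^{(p)})\xrightarrow{\ \sim\ }H^{p,q}_{\bar\vt}(\W,\W')$ is induced by the inclusion of complexes $C^\bullet(\W,\W';\scO^{(p)})\hookrightarrow \scE^{(p,\bullet)}(\W,\W')$ through the degree-zero Dolbeault part; this uses Corollary~\ref{natisos}/Theorem~\ref{canDol} (a Stein covering is good for $\scE^{(p,\bullet)}$) together with a relative version of the usual spectral-sequence or direct double-complex argument comparing the two filtrations of the \v{C}ech--Dolbeault double complex. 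Link three is the relative Leray isomorphism $H^{q}(\W,\W';\scO^{(p)})\simeq H^{q}(X,X';\scO^{(p)})$ from Theorem~\ref{thleray}. Composing, I obtain the desired canonical isomorphism
\[
H^{p,q}_{\bar\vt}(X,X')\ \simeq\ H^{q}(X,X';\scO^{(p)}).
\]

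To see it is genuinely canonical, I would check naturality: each of the three isomorphisms is compatible with the long exact sequences for a triple (the relative \v{C}ech--Dolbeault sequence \eqref{lexactcd}, the sequence of Proposition~\ref{proptriplerd}, and the long exact sequence of local cohomology from a flabby resolution), and in the absolute case $X'=\emptyset$ the composite reduces, by the last remark before Remark~\ref{remcano}, to the canonical Dolbeault isomorphism of Theorem~\ref{canDol}. A five-lemma argument on the morphism of long exact sequences relating the pair $(X,X')$ then pins the relative isomorphism down uniquely, independently of the chosen Stein covering; independence of the covering also follows directly from Theorem~\ref{3.2rel} combined with Proposition~\ref{propuni} and Corollary~\ref{corunique}.

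I expect the only real subtlety to be the middle link, Theorem~\ref{thsummary}.\,2 — that is, verifying that on a Stein covering the inclusion $C^\bullet(\W,\W';\scO^{(p)})\hookrightarrow\scE^{(p,\bullet)}(\W,\W')$ is a quasi-isomorphism. For a single covering one runs the standard two-step collapse of the \v{C}ech--Dolbeault double complex: collapsing in the Dolbeault direction uses the Dolbeault--Grothendieck lemma to replace $\scE^{(p,\bullet)}$ by $\scO^{(p)}$, collapsing in the \v{C}ech direction uses Oka--Cartan to land on \v{C}ech cohomology. In the relative setting one must carry this argument through the short exact sequence \eqref{sexactcd} (or the exact triple of \v{C}ech complexes) and invoke the five lemma, which is exactly where the references to \cite{Su11} do the bookkeeping; since the excerpt lets me assume Theorem~\ref{thsummary}, I would simply cite it and concentrate the written proof on exhibiting the composite and its naturality. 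Everything else is formal diagram-chasing.
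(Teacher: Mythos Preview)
Your proposal is correct and follows essentially the same route the paper indicates: the paper's own ``proof'' of Theorem~\ref{thdrel2} is precisely the sentence ``Using the above we have an alternative proof of the relative Dolbeault theorem,'' where ``the above'' is Theorem~\ref{3.2rel}, to be composed with Theorem~\ref{thsummary}.\,2 (which in turn uses Oka--Cartan and the relative Leray theorem) for a Stein pair of coverings $(\W,\W')$. Your added remarks on naturality and independence of the covering are reasonable elaborations but not required by the paper, which defers the details to \cite{Su11}.
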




The sequence in Proposition \ref{proptriplerd} is
compatible with the corresponding sequence for the relative cohomology
and the excision of Proposition \ref{excision} is compatible with that of the relative cohomology,
both via the \iso\ of Theorem \ref{thdrel2}.  

We finish this section by presenting the following topic\,:

\paragraph{Differential\,:} 

Let $X$ be a complex \mfd\ of dimension $n$ and $X'$ an open set in $X$. We consider coverings $\W$ and $\W'$ of
$X$ and $X'$ as before.

First note that the second \iso\ of Theorem \ref{thsummary}.\,2 is compatible with the differential $d:\scO^{(p)}\ra\scO^{(p+1)}$, in fact $d=\partial$ in this case.
We define
\[
\partial:\scE^{(p,q)}(\W,\W')\lra \scE^{(p+1,q)}(\W,\W')\qquad\text{by}\ \ (\partial\xi)^{q_{1}}=(-1)^{q-q_{1}}\,\partial\xi^{q_{1}},\ \ 0\le q_{1}\le q.
\]
Straightforward computations show that it is compatible with the operator $\bar\vt$, i.e., the 
following diagram is commutative\,{\rm :}
\[
\SelectTips{cm}{}
\xymatrix
@C=.7cm
@R=.7cm
{\scE^{(p,q)}(\W,\W')\ar[r]^-{\partial} \ar[d]^-{\bar\vt}& \scE^{(p+1,q)}(\W,\W')\ar[d]^-{\bar\vt}\\
 \scE^{(p,q+1)}(\W,\W')\ar[r]^-{\partial} & \scE^{(p+1,q+1)}(\W,\W').}
\]
Thus we have
\[
\partial :H^{p,q}_{\bar\vt}(\W,\W')\lra H^{p+1,q}_{\bar\vt}(\W,\W').
\]

\begin{proposition}\label{propdcomp} If $\W$ is Stein, we have the following commutative diagram\,{\rm :}
\[
\SelectTips{cm}{}
\xymatrix
@C=.7cm
@R=.6cm
{H^{p,q}_{\bar\vt}(\W,\W')\ar[r]^-{\partial}&H^{p+1,q}_{\bar\vt}(\W,\W')\\
H^{q}(\W,\W';\scO^{(p)}) \ar[r]^-{d}  \ar[u]_-{\wr}& H^{q}(\W,\W';\scO^{(p+1)})\ar[u]_-{\wr},}
\]
where the vertical \iso s are the ones in Theorem \ref{thsummary}.\,2.
\end{proposition}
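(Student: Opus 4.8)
The plan is to prove that the square commutes already at the level of cochain complexes, before passing to cohomology; then the induced square on cohomology, whose vertical arrows are by construction the isomorphisms of Theorem~\ref{thsummary}.\,2, commutes automatically.

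First I would recall (Remark~\ref{remcan}.\,2) that each vertical isomorphism is induced by the inclusion of complexes
\[
\iota_{p}: C^{\bullet}(\W,\W';\scO^{(p)})\hra C^{\bullet}(\W,\W';\scE^{(p,0)})\subset \scE^{(p,\bullet)}(\W,\W').
\]
Under $\iota_{p}$, a \v{C}ech $q$-cochain $c$ with values in $\scO^{(p)}$ corresponds to the element $\xi\in\scE^{(p,q)}(\W,\W')$ all of whose components vanish except $\xi^{q}=c$ (in the notation $\scE^{(p,q)}(\W,\W')=\bigoplus_{q_{1}+q_{2}=q}C^{q_{1}}(\W,\W';\scE^{(p,q_{2})})$). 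I would note in passing that $\iota_{p}$ is indeed a morphism of complexes: on such $\xi$ one has $(\bar\vt\xi)^{q_{1}}=\check\delta\xi^{q_{1}-1}+(-1)^{q_{1}}\bp\xi^{q_{1}}$, and since $\bp c=0$ this reduces to $\bar\vt(\iota_{p}c)=\iota_{p}(\check\delta c)$.

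Next I would compare the two composites around the square on a cochain $c\in C^{q}(\W,\W';\scO^{(p)})$. Applying $\partial$ to $\iota_{p}(c)$: by the defining formula $(\partial\xi)^{q_{1}}=(-1)^{q-q_{1}}\,\partial\xi^{q_{1}}$, the element $\partial(\iota_{p}c)$ has its only nonzero component in degree $q_{1}=q$, equal to $(-1)^{q-q}\,\partial c=\partial c$. On the other side, $d$ sends $c$ to $dc\in C^{q}(\W,\W';\scO^{(p+1)})$, and $\iota_{p+1}(dc)$ has its only nonzero component in degree $q$, equal to $dc$. Since $d=\partial$ on holomorphic forms (as already observed just before the statement) and $\partial c$ has holomorphic coefficients, these two elements of $\scE^{(p+1,q)}(\W,\W')$ coincide. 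Hence the square of complexes
\[
\xymatrix@C=1cm@R=.6cm{
C^{\bullet}(\W,\W';\scO^{(p)})\ar[r]^-{d}\ar[d]_-{\iota_{p}}&C^{\bullet}(\W,\W';\scO^{(p+1)})\ar[d]^-{\iota_{p+1}}\\
\scE^{(p,\bullet)}(\W,\W')\ar[r]^-{\partial}&\scE^{(p+1,\bullet)}(\W,\W')}
\]
commutes on the nose, and passing to cohomology yields the proposition (Stein-ness of $\W$ being used only to ensure that the vertical maps are isomorphisms, via Theorem~\ref{thsummary}.\,2).

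There is no real obstacle here; the only points requiring attention are that the sign $(-1)^{q-q_{1}}$ in the definition of $\partial$ on $\scE^{(p,\bullet)}(\W,\W')$ is trivial in the top \v{C}ech degree $q_{1}=q$, which is precisely where the image of $\iota_{p}$ lives, so no sign discrepancy arises; and that $\partial$ genuinely commutes with $\bar\vt$, which is the compatibility asserted (with proof indicated) immediately preceding the statement and may therefore be invoked.
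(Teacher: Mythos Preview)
Your proof is correct and is precisely the natural argument the paper has in mind: the paper states the proposition without proof, having just observed that $d=\partial$ on holomorphic forms and that $\partial$ commutes with $\bar\vt$; your verification that $\iota_{p+1}\circ d=\partial\circ\iota_p$ on cochains (with the sign $(-1)^{q-q_1}$ trivial at $q_1=q$) is exactly the straightforward check being left to the reader.
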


\begin{example}\label{extwoder}  In the case $\W=\{X\}$, we have $H^{p,q}_{\bar\vt}(\W)=H^{p,q}_{\bp}(X)$ (cf. the case I in Subsection \ref{ssecCD})
and 
$\partial :H^{p,q}_{\bar\vt}(\W)\ra H^{p+1,q}_{\bar\vt}(\W)$ is induced by $\t\mapsto (-1)^{q}\,\partial\t$.

In the case  $\W=\{W_{0},W_{1}\}$ (cf. Case II in Subsection \ref{ssecCD}),
$\partial :H^{p,q}_{\bar\vt}(\W)\ra H^{p+1,q}_{\bar\vt}(\W)$ is induced by
\[
(\xi_{0},\xi_{1},\xi_{01})\mapsto (-1)^{q}\,(\partial\xi_{0},\partial\xi_{1},-\partial\xi_{01}).
\]
\end{example}


From the above we have the differential
\[
\partial:H^{p,q}_{\bar\vt}(X,X')\lra H^{p+1,q}_{\bar\vt}(X,X')\quad\text{induced by}\ \ (\xi_{1},\xi_{01})\mapsto
(-1)^{q}\,(\partial\xi_{1},-\partial\xi_{01}). 
\]
From Proposition \ref{propdcomp}, we have

\begin{proposition}\label{propdcomp2} We have the following commutative diagram\,{\rm :}
\[
\SelectTips{cm}{}
\xymatrix
@C=.7cm
@R=.5cm
{H^{p,q}_{\bar\vt}(X,X')\ar[r]^-{\partial} \ar@{-}[d]^-{\wr}
&H^{p+1,q}_{\bar\vt}(X,X')\ar@{-}[d]^-{\wr}
\\
 H^{q}(X,X';\scO^{(p)})\ar[r]^-{d} & H^{q}(X,X';\scO^{(p+1)}),}
\]
where the vertical \iso s are the ones in Theorem \ref{thdrel2}.
\end{proposition}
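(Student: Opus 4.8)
The plan is to deduce the proposition from two commutative squares that are already available, once it is recalled that the \iso\ of Theorem~\ref{thdrel2} factors through a Stein covering. Fix a Stein covering $\W$ of $X$ and a subcovering $\W'$ of $X'$. The \iso\ of Theorem~\ref{thdrel2} is, by the construction through Theorem~\ref{3.2rel}, the composite
\[
H^{p,q}_{\bar\vt}(X,X')\;\xrightarrow{\ \varphi\ }\;H^{p,q}_{\bar\vt}(\W,\W')\;\xleftarrow{\ \sim\ }\;H^{q}(\W,\W';\scO^{(p)})\;\xrightarrow{\ \sim\ }\;H^{q}(X,X';\scO^{(p)}),
\]
where $\varphi$ is the morphism of Theorem~\ref{3.2rel}, the middle \iso\ is the first one of Theorem~\ref{thsummary}.\,2 (induced by inclusion of complexes $C^{\bullet}(\W,\W';\scO^{(p)})\hra\scE^{(p,\bullet)}(\W,\W')$), and the last is the relative Leray \iso\ of Theorem~\ref{thleray}. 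It therefore suffices to intertwine $\partial$, resp.\ $d$, along each of these three \iso s; stacking the resulting three squares and unwinding the composite then gives Proposition~\ref{propdcomp2}.

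The first step is to check that $\varphi$ is a morphism of complexes for $\partial$ as well as for $\bar\vt$, i.e.\ that $\varphi\circ\partial=\partial\circ\varphi$ at the cochain level. With $\partial$ on $\scE^{(p,\bullet)}(\V^{\star},\V')$ given by $(\xi_{1},\xi_{01})\mapsto(-1)^{q}(\partial\xi_{1},-\partial\xi_{01})$ and $\partial$ on $\scE^{(p,\bullet)}(\W,\W')$ given by $(\partial\eta)^{q_{1}}=(-1)^{q-q_{1}}\partial\eta^{q_{1}}$, this is a short sign check against the explicit formula for $\varphi$: on the degree-zero slot the sign $(-1)^{q}$ on the source matches $(-1)^{q-0}$ on the target, and on the degree-one slot the sign $-(-1)^{q}$ (the $-$ of $-\partial\xi_{01}$ absorbed) matches $(-1)^{q-1}$, consistently with the $\pm$ built into $\varphi(\xi)_{\a\b}$. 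Passing to cohomology gives the square relating $\partial$ on $H^{p,q}_{\bar\vt}(X,X')$ to $\partial$ on $H^{p,q}_{\bar\vt}(\W,\W')$ along $\varphi$.

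The second square is Proposition~\ref{propdcomp} itself: for Stein $\W$ it intertwines $\partial$ on $H^{p,q}_{\bar\vt}(\W,\W')$ with $d$ on $H^{q}(\W,\W';\scO^{(p)})$ via the first \iso\ of Theorem~\ref{thsummary}.\,2. The third square says that the relative Leray \iso\ $H^{q}(\W,\W';\scO^{(p)})\simeq H^{q}(X,X';\scO^{(p)})$ is compatible with $d:\scO^{(p)}\to\scO^{(p+1)}$; this is immediate from the naturality of that \iso\ in the coefficient sheaf, and is exactly the compatibility recorded just before Proposition~\ref{propdcomp}. Concatenating the three squares and reading off the composite of Theorem~\ref{thdrel2} proves Proposition~\ref{propdcomp2}.

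I expect the one genuinely delicate point to be the first step, namely making sure that the operator $\partial$ on $H^{p,q}_{\bar\vt}(X,X')$ — presented by the ad hoc formula $(\xi_{1},\xi_{01})\mapsto(-1)^{q}(\partial\xi_{1},-\partial\xi_{01})$ — is genuinely the transport along $\varphi$ of the general $\partial$ on $\scE^{(p,\bullet)}(\W,\W')$, so that $\varphi$ simultaneously respects $\bar\vt$ and $\partial$. It does, and precisely because the minus signs built into $\varphi$ (the sign on $\varphi(\xi)_{\a\b}$ for $\a\notin I'$, $\b\in I'$) are the ones required; beyond this sign bookkeeping, the proposition is pure naturality and needs no analytic input beyond Theorems~\ref{thdrel2}, \ref{3.2rel} and Proposition~\ref{propdcomp}.
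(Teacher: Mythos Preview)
Your proposal is correct and follows the same route the paper intends: the paper's proof is the single phrase ``From Proposition~\ref{propdcomp}, we have'', and your argument is precisely the unpacking of that phrase---factor the \iso\ of Theorem~\ref{thdrel2} through a Stein covering via $\varphi$ (Theorem~\ref{3.2rel}), apply Proposition~\ref{propdcomp} there, and use naturality of the Leray \iso. Your sign check that $\varphi$ intertwines the two $\partial$'s at the cochain level is the only nontrivial detail, and it is carried out correctly.
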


\lsection{Relation with the case of de~Rham complex}\label{secdRD}

\subsection{Relative de~Rham cohomology}\label{ssrdr}

We refer to \cite{BT} and  \cite{Su2} for  details on  the \v{C}ech-de~Rham cohomology. For the relative de~Rham
cohomology and the Thom class in this context, see \cite{Su2}. 

In this subsection, we let $X$ denote a $C^{\infty}$ \mfd\ of dimension $m$ with a countable basis.
We assume that the coverings  we consider are  locally finite. We denote
by $\scE^{(q)}_{X}$  the sheaf of  $C^\infty$ $q$-forms on $X$. 
Recall that, by the Poincar\'e lemma, $\scE^{(\bullet)}_{X}$ gives a fine resolution of the constant
sheaf $\C_{X}$\,:
\[
0\lra\C\lra\scE^{(0)}\overset{d}\lra\scE^{(1)}\overset{d}\lra\cdots\overset{d}\lra 
\scE^{(m)}\lra 0.
\]

\paragraph{de~Rham cohomology\,:}
The  de~Rham cohomology
$H^{q}_{d}(X)$ is the  cohomology of the complex $(\scE^{(\bullet)}(X),d)$. The de~Rham theorem says that there
is an \iso
\[
H^{q}_{d}(X)\simeq H^{q}(X;\C_{X}).
\]
Note that among the \iso s, there is a canonical one (cf. \cite{Su11}).

\paragraph{\v{C}ech-de~Rham cohomology\,:}
Let $X'$ be an open set in $X$ and 
$(\W, \W')$
a pair of  coverings of $(X, X')$. The  \v{C}ech-de~Rham cohomology 
$H^{q}_{D}(\W,\W')$ on $(\W,\W')$
is 
 the cohomology of the single complex 
$(\scE^{(\bullet)}(\W,\W'),D)$ associated with
the double complex $(C^{\bullet}(\W,\W';\scE^{(\bullet})),\check\delta,(-1)^{\bullet}d)$, i.e.,
\[
\scE^{(q)}(\W,\W')=\bigoplus_{q_{1}+q_{2}=q}C^{q_{1}}(\W,\W';\scE^{(q_{2})}),\qquad  D=\check\delta+(-1)^{q_{1}}d.
\]

We say that $\W$ is {\em good} if every non-empty finite intersection $W_{\a_{0}\dots\a_{q}}$ is diffeomorphic with
$\R^{m}$. Note that every $C^{\infty}$ \mfd\ $X$
admits a good covering and that the good coverings are cofinal in the set of coverings of $X$.
By the Poincar\'e lemma, we see that a good covering is good for $\scE^{(\bullet)}$ in the sense
of \cite[Section 3]{Su11}.
Thus we have the following canonical \iso s\,:
\begin{enumerate}
\item[(1)] For any covering $\W$, $H^{q}_{d}(X)\overset\sim\ra H^{q}_{D}(\W)$.

\item[(2)] For a good covering $\W$,
\[
H^{q}_{D}(\W,\W')\overset\sim\longleftarrow H^{q}(\W,\W';\C)
%
\simeq H^{q}(X,X';\C).
\]
\end{enumerate}


\paragraph{Relative de~Rham cohomology\,:} We may also
define the relative de~Rham cohomology as in the case of relative
Dolbeault cohomology. Thus
let $S$ be a closed set in $X$. Letting $V_{0}=X\ssm S$ and $V_{1}$  a \nbd\ of $S$ in $X$, we  consider the 
coverings $\V=\{V_{0},V_{1}\}$ and $\V'=\{V_{0}\}$ of $X$  and $X\ssm S$, as before. We set
\[
\scE^{(q)}(\V,\V')=\scE^{(q)}(V_{1})\oplus\scE^{(q-1)}(V_{01})
\]
and define 
\[
D:\scE^{(q)}(\V,\V')\lra \scE^{(q+1)}(\V,\V')\quad\text{by}\quad 
D (\sigma_{1},\sigma_{01})=(d\sigma_{1},\sigma_{1}-d\sigma_{01}).
\]
\begin{definition} 
The $q$-th {\em  relative de~Rham cohomology} $H_{D}^{q}(\V,\V')$   is the 
$q$-th cohomology of the complex $(\scE^{(\bullet)}(\V,\V'),D)$. 
\end{definition}

As in the case of  Dolbeault complex, we may show that it does not
depend on the choice of $V_{1}$ and we denote it by $H_{D}^{q}(X,X\ssm S)$.
We have the relative de~Rham theorem which says that there is a canonical \iso\ (cf.  \cite{Su7}, \cite{Su11})\,:
\begin{equationth}\label{reldR}
H^{q}_{D}(X,X\ssm S)\simeq H^{q}(X,X\ssm S;\C_{X}).
\end{equationth}
\begin{remark} {\bf 1.} The sheaf cohomology
$H^{q}(X;\Z_{X})$ is canonically isomorphic with the singular 
cohomology 
$H^{q}(X;\Z)$ of $X$ with $\Z$-coefficients on finite chains and the relative sheaf cohomology $H^{q}(X,X\ssm S;\Z_{X})$ is isomorphic with the relative
singular cohomology $H^{q}(X,X\ssm S;\Z)$.
\smallskip



\noindent
{\bf 2.} In \cite{BT}, the relative de~Rham cohomology is introduced somewhat in a different way
(cf. Remark \ref{remBT}).
\end{remark}

\paragraph{Thom class\,:} Let $\pi:E\ra M$ be a $C^{\infty}$ real \vb\ of rank $l$ on a 
$C^{\infty}$ \mfd\ $M$. We identify $M$ with the image of the zero section. Suppose it is orientable as a bundle and is specified with an orientation, i.e., oriented. Then we have the Thom \iso
\[
T:H^{q-l}(M;\Z)\overset\sim\lra H^{q}(E,E\ssm M;\Z).
\]
The {\em Thom class} $\vP_{E}\in H^{l}(E,E\ssm M;\Z)$ of $E$ is the image of $[1]\in H^{0}(M;\Z)$ by $T$.



The Thom \iso\ 
with $\C$-coefficients is expressed in terms of the de~Rham and  relative
de~Rham cohomologies\,:
\[
T:H^{q-l}_{d}(M)\overset\sim\lra H^{q}_{D}(E,E\ssm M).
\]
Its inverse is given by the integration along the fibers of $\pi$ (cf. \cite[Ch.II, Theorem~5.3]{Su2}).
Let $W_{0}=E\ssm M$ and $W_{1}=E$ and consider the coverings $\W=\{W_{0},W_{1}\}$ and $\W'=\{W_{0}\}$ of  $E$ and $E\ssm M$.
Then, $H^{q}_{D}(E,E\ssm M)=H^{q}_{D}(\W,\W')$ and  
we have\,:

\begin{proposition}\label{thtrivial}
For the trivial bundle $E=\R^{l}\times M$, $\varPsi_{E}$ is represented by the cocycle
\[
(0,-\psi_{l})\qquad\text{in}\ \ \scE^{(l)}(\W,\W'),
\]
where $\psi_{l}$ is the angular form on $\R^{l}$.
\end{proposition}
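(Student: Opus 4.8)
The plan is to make everything explicit using the product structure $E=\R^l\times M$ and the description of the relative de~Rham cochains in the two-set case. On $\W=\{W_0,W_1\}$ with $W_0=E\ssm M$ and $W_1=E$, we have $\scE^{(q)}(\W,\W')=\scE^{(q)}(E)\oplus\scE^{(q-1)}(E\ssm M)$, and a cocycle is a pair $(\sigma_1,\sigma_{01})$ with $d\sigma_1=0$ and $\sigma_1=d\sigma_{01}$ on $E\ssm M$. The Thom class $\vP_E$ is by definition $T([1])$, and by the quoted result \cite[Ch.II, Theorem~5.3]{Su2} the inverse of $T$ is integration along the fibers of $\pi$; so it suffices to exhibit a cocycle representing a class that (a) restricts to $0$ in $H^l_d(E)$ in the appropriate sense and (b) integrates along the fiber to $1\in H^0_d(M)$. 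First I would recall that the angular form $\psi_l$ on $\R^l\ssm\{0\}$ is the normalized generator of $H^{l-1}_d(\R^l\ssm\{0\})$ with $\int_{S^{l-1}}\psi_l=1$, and that it is closed but not exact on $\R^l\ssm\{0\}$; pulled back to $(\R^l\ssm\{0\})\times M=E\ssm M$ via the projection, it is a closed $(l-1)$-form there, so $(0,-\psi_l)$ satisfies $d(0)=0$ on $E$ and $0=d(-\psi_l)$ on $E\ssm M$, hence is a genuine cocycle in $\scE^{(l)}(\W,\W')$.

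Next I would identify the class of $(0,-\psi_l)$ under the long exact sequence of the pair. The connecting morphism $\delta\colon H^{l-1}_d(E\ssm M)\to H^l_D(\W,\W')$ sends the class of a closed form $\xi_0$ on $W_0$ to the class of $(0,-\xi_0)$ (this is the de~Rham analogue of the formula recorded after \eqref{lexactreld} and in case~II of Section~\ref{secCD}); therefore $[(0,-\psi_l)]=\delta([\psi_l])$. So I must check that $\delta([\psi_l])$ is the Thom class. For this I would use the characterization of $\vP_E$ via integration along the fiber together with the commutativity of fiber integration with the exact sequences: integrating $(0,-\psi_l)$ along the fiber of $\pi$ amounts to $\int_{\R^l}$ of the $E$-component (which is $0$) corrected by the boundary term $\int_{S^{l-1}}(-(-\psi_l))=\int_{S^{l-1}}\psi_l=1$, the sign bookkeeping being exactly what makes Stokes' theorem on the fiber pair up $(0,-\psi_l)$ with the constant $1$. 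Concretely, one writes $1=\rho_1\cdot 1$ where $(\rho_0,\rho_1)$ is a partition of unity subordinate to $\W$ with $\rho_1$ supported near $M$, forms the global representative $\rho_1\cdot 0 - d\rho_0\wedge(-\psi_l)=d\rho_0\wedge\psi_l$ on $E$ (via the de~Rham analogue of Proposition~\ref{propinvtwo}), and integrates along the fiber: $\int_{\R^l}d\rho_0\wedge\psi_l=\int_{\R^l}d(\rho_0\psi_l)=\int_{S^{l-1}_\infty}\rho_0\psi_l-\int_{S^{l-1}_0}\rho_0\psi_l=1$ since $\rho_0\equiv 1$ near infinity on each fiber and $\rho_0\equiv 0$ near $0$. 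This shows the fiber integral of $[(0,-\psi_l)]$ is $[1]$, hence $[(0,-\psi_l)]=T([1])=\vP_E$.

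The main obstacle I anticipate is the sign and normalization bookkeeping: pinning down the orientation convention on $E=\R^l\times M$ relative to the orientation of $\R^l$ used to normalize $\psi_l$, making sure the sign in $D(\sigma_1,\sigma_{01})=(d\sigma_1,\sigma_1-d\sigma_{01})$ and in the connecting map $\delta$ are consistently tracked, and confirming that "integration along the fiber" as defined in \cite{Su2} inverts $T$ with the sign that yields $+1$ rather than $\pm 1$. Everything else — that $(0,-\psi_l)$ is a cocycle, that it is not a coboundary (it cannot be, since its fiber integral is $1\neq 0$), and that it lives in the correct bidegree — is immediate from the definitions. So the write-up would be: verify the cocycle condition in one line; invoke Proposition~\ref{propinvtwo}'s de~Rham analogue to pass to the global representative $d\rho_0\wedge\psi_l$; apply Stokes along the fiber to get fiber integral $=1$; and conclude by the quoted theorem that this identifies the class with $\vP_E$.
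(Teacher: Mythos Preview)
Your argument is correct. The paper itself does not supply a proof of this proposition; it is stated as a fact, followed only by a recall of the explicit formula for $\psi_l$ and the two properties that matter, namely $d\psi_l=0$ and $\int_{S^{l-1}}\psi_l=1$. The intended justification is the reference to \cite[Ch.II]{Su2}, where the inverse of the Thom isomorphism is identified with integration along the fiber.

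Your route is exactly the one implicit in that reference: check that $(0,-\psi_l)$ is a cocycle (immediate from $d\psi_l=0$), and then verify that its fiber integral is $1$. Your partition-of-unity computation is fine, but note that the paper's own machinery gives a slightly cleaner version: with the honeycomb system $R_1=\bar B^l_\varepsilon\times M$, $R_0=E\ssm\op{Int}R_1$, $R_{01}=-\partial R_1$, the fiberwise integral of $(0,-\psi_l)$ is
\[
\int_{\bar B^l_\varepsilon}0+\int_{-S^{l-1}_\varepsilon}(-\psi_l)=\int_{S^{l-1}}\psi_l=1,
\]
which avoids the Stokes step entirely. Either way, the sign worries you flag are absorbed by the normalization $\int_{S^{l-1}}\psi_l=1$ stated right after the proposition, together with the orientation convention (fiber orientation followed by base orientation) recorded in the paragraph after \eqref{ang}; so there is nothing further to check.
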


Recall that $\psi_{l}$  is given by
\begin{equationth}\label{ang}
\psi_{l}=C_{l}\frac{\sum_{i=1}^{l}\varPhi_{i}(x)}{\Vert x\Vert^{l}},\qquad\varPhi_{i}(x)=(-1)^{i-1}x_{i}\,dx_{1}\wedge\cdots\wedge\widehat{dx_{i}}\wedge\cdots\wedge dx_{l}
\end{equationth}
and
\[
C_{l}=\begin{cases}\frac {(k-1)!}{2\pi^{k}}&\qquad
l=2k,\\
\frac{(2k)!}{2^{l}\pi^{k}k!}&\qquad l=2k+1.
\end{cases}
\]
The important fact is that it is a closed $(l-1)$-form and $\int_{S^{l-1}}\psi_{l}=1$ for a usually oriented $(l-1)$-sphere in $\R^{l}\ssm\{0\}$.

In the above situation,  if $M$ is orientable,  the total space $E$ is orientable. We endow them with orientations so that the orientation of
the fiber of $\pi$ followed by the orientation of $M$ gives the orientation of $E$.

Let $X$ be a $C^{\infty}$ \mfd\ of dimension $m$ and $M\subset X$ a closed sub\mfd\ of dimension $n$. 
Set $l=m-n$. If we denote by $T_{M}X$ the normal bundle of $M$ in $X$, by the tubular \nbd\ theorem and excision,
we have a canonical \iso
\[
H^{q}(X,X\ssm M;\Z)\simeq H^{q}(T_{M}X,T_{M}X\ssm M;\Z).
\]
Note that if $X$ and $M$ are orientable, the bundle  $T_{M}X$ is orientable and thus the total space is also
orientable. We endow them with orientations according to the above rule.
In this case the Thom class $\vP_{M}\in H^{l}(X,X\ssm M;\Z)$ of $M$ in $X$ is defined to be the class corresponding to the Thom class of $T_{M}X$ under the above \iso\ for $q=l$.
We also have the Thom \iso
\begin{equationth}\label{thom}
T:H^{q-l}(M;\Z)\overset\sim\lra H^{q}(X,X\ssm M;\Z).
\end{equationth}

\subsection{Relative de~Rham and relative Dolbeault cohomologies}

Let $X$ be a complex \mfd\ of dimension $n$. We consider the following two cases where 
there is a natural relation between the two cohomology theories.
\vv

\noindent
{\bf (I)} Noting that, for any $(n,q)$-form $\o$, $\bp\o=d\o$,
there is a natural morphism
\begin{equationth}\label{DdR}
H^{n,q}_{\bar\vt}(\W,\W')\lra H^{n+q}_{D}(\W,\W').
\end{equationth}

In particular, this is used to define the integration on the \v{C}ech-Dolbeault cohomology in the subsequent section.
\vv

\noindent
{\bf (II)} 
We define
$\rho^{q}:\scE^{(q)}\lra \scE^{(0,q)}$
by assigning to a $q$-form $\o$ its $(0,q)$-component $\o^{(0,q)}$. Then $\rho^{q+1}(d\o)=\bp(\rho^{q}\o)$
and we have\,:

\begin{proposition} There is a natural morphism of complexes
\[
\SelectTips{cm}{}
\xymatrix@C=.7cm
@R=.7cm
{ 0\ar[r]& \C\ar[r] \ar[d]^-{\iota}&\scE^{(0)}\ar[r]^-{d}\ar[d]^-{\rho^{0}} &\scE^{(1)}\ar[d]^-{\rho^{1}}\ar[r]^-{d}&\cdots\ar[r]^-{d}&\scE^{(q)}\ar[d]^-{\rho^{q}}\ar[r]^-{d}&\cdots\\
0\ar[r] & \scO \ar[r] & \scE^{(0,0)}\ar[r]^-{\bp}&\scE^{(0,1)}\ar[r]^-{\bp}&\cdots\ar[r]^-{\bp}&\scE^{(0,q)}\ar[r]^-{\bp}&\cdots.}
\]
\end{proposition}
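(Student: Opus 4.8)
The plan is to verify the two claimed identities directly and then assemble the diagram. First I would check that $\rho^{q}:\scE^{(q)}\to\scE^{(0,q)}$ is a well-defined sheaf morphism: a $C^{\infty}$ $q$-form decomposes canonically as $\o=\sum_{r+s=q}\o^{(r,s)}$ into its bidegree components, and $\o\mapsto\o^{(0,q)}$ is $\C$-linear and compatible with restrictions, hence a morphism of sheaves. Next I would establish the key commutation relation $\rho^{q+1}(d\o)=\bp(\rho^{q}\o)$. Writing $d=\partial+\bp$ on each component $\o^{(r,s)}$, one has $\partial\o^{(r,s)}\in\scE^{(r+1,s)}$ and $\bp\o^{(r,s)}\in\scE^{(r,s+1)}$; so the $(0,q+1)$-component of $d\o$ receives a contribution only from terms of the form $\bp\o^{(0,q)}$ (the operator $\partial$ can never produce bidegree $(0,\cdot)$ from a nonzero form, and $\bp\o^{(r,s)}$ has $r\ge 1$ unless $r=0$). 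Hence $(d\o)^{(0,q+1)}=\bp(\o^{(0,q)})$, which is exactly $\rho^{q+1}(d\o)=\bp(\rho^{q}\o)$.

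Then I would treat the left-hand square separately. The vertical arrow $\iota:\C\hra\scO$ is the tautological inclusion of locally constant functions into holomorphic functions, and $\rho^{0}:\scE^{(0)}\to\scE^{(0,0)}$ is the identity on $C^{\infty}$ functions (a $0$-form has only a $(0,0)$-component). Commutativity of the square
\[
\xymatrix@C=.7cm@R=.7cm{\C\ar[r]\ar[d]^-{\iota}&\scE^{(0)}\ar[d]^-{\rho^{0}}\\ \scO\ar[r]&\scE^{(0,0)}}
\]
is then immediate, since both composites send a local constant $c$ to the constant function $c$ viewed in $\scE^{(0,0)}$. Together with the commutation relation above, every square of the diagram commutes, which is precisely the assertion that $(\iota,\rho^{\bullet})$ is a morphism of complexes from the Poincar\'e resolution $(\scE^{(\bullet)},d)$ of $\C$ to the Dolbeault resolution $(\scE^{(0,\bullet)},\bp)$ of $\scO$.

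I do not expect any genuine obstacle here; the statement is essentially the observation that $\bp$ is the $(0,\cdot)$-part of $d$. The only point requiring a little care is the bidegree bookkeeping in the identity $\rho^{q+1}(d\o)=\bp(\rho^{q}\o)$, namely confirming that no cross-term $\partial\o^{(1,q-1)}$ or similar can land in bidegree $(0,q+1)$; this is clear once one notes $\partial$ strictly raises the holomorphic degree, so it is harmless. Thus the proof amounts to recording these two routine verifications and observing that the resolutions in the top and bottom rows are the Poincar\'e and Dolbeault--Grothendieck resolutions already recalled in the paper.

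\proofend
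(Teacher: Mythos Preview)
Your proposal is correct and matches the paper's approach: the paper simply records the identity $\rho^{q+1}(d\o)=\bp(\rho^{q}\o)$ immediately before the proposition and treats the result as evident, while you have written out the routine bidegree verification and the check of the leftmost square. There is nothing more to it.
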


\begin{corollary}\label{cordRD} There is a  natural morphism  
$\rho^{q}:H^{q}_{D}(X,X')\ra H^{0,q}_{\bar\vt}(X,X')$ that makes
the following  diagram commutative\,{\rm :}
\[
\SelectTips{cm}{}
\xymatrix@R=.4cm
@C=.6cm
{ H^{q}_{D}(X,X')\ar[r]^-{\rho^{q}} \ar@{-}[d]^-{\wr}& H^{0,q}_{\bar\vt}(X,X')\ar@{-}[d]^-{\wr}\\
 H^{q}(X,X';\C)\ar[r]^-{\iota} &H^{q}(X,X';\scO) .}
\]
\end{corollary}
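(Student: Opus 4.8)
The plan is to deduce Corollary~\ref{cordRD} from the morphism of complexes in the preceding proposition by applying the general machinery that has already been set up for \v{C}ech-Dolbeault and \v{C}ech-de~Rham cohomology. First I would recall that, by Proposition~\ref{propuni} (and its de~Rham analogue discussed in Subsection~\ref{ssrdr}), for a closed set $S=X\ssm X'$ we may compute both $H^{q}_{D}(X,X')$ and $H^{0,q}_{\bar\vt}(X,X')$ using a two-element covering $\V=\{V_{0},V_{1}\}$ with $V_{0}=X'$ and $V_{1}$ a \nbd\ of $S$; concretely $\scE^{(q)}(\V,\V')=\scE^{(q)}(V_{1})\oplus\scE^{(q-1)}(V_{01})$ and $\scE^{(0,q)}(\V,\V')=\scE^{(0,q)}(V_{1})\oplus\scE^{(0,q-1)}(V_{01})$, with differentials $D(\sigma_{1},\sigma_{01})=(d\sigma_{1},\sigma_{1}-d\sigma_{01})$ and $\bar\vt(\xi_{1},\xi_{01})=(\bp\xi_{1},\xi_{1}-\bp\xi_{01})$.

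Next I would define $\rho^{q}$ on these relative complexes componentwise by $(\sigma_{1},\sigma_{01})\mapsto(\rho^{q}\sigma_{1},\rho^{q-1}\sigma_{01})$, using the sheaf-level maps $\rho^{\bullet}:\scE^{(\bullet)}\to\scE^{(0,\bullet)}$ of the proposition. A one-line check shows this commutes with the differentials: the relation $\rho^{q+1}(d\o)=\bp(\rho^{q}\o)$ gives $\rho^{q+1}(d\sigma_{1})=\bp(\rho^{q}\sigma_{1})$ in the first slot, and in the second slot $\rho^{q}(\sigma_{1}-d\sigma_{01})=\rho^{q}\sigma_{1}-\bp(\rho^{q-1}\sigma_{01})$. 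Hence $\rho$ is a morphism of complexes and induces $\rho^{q}:H^{q}_{D}(\V,\V')\to H^{0,q}_{\bar\vt}(\V,\V')$, which via the identifications of Proposition~\ref{propuni} yields the desired $\rho^{q}:H^{q}_{D}(X,X')\to H^{0,q}_{\bar\vt}(X,X')$.

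For the commutative square, I would invoke the naturality of the canonical \iso s already established. On the one hand, choosing $\V$ to be Stein (equivalently good) when possible, or more simply arguing at the level of the abstract framework of \cite{Su11}, the vertical \iso\ $H^{q}_{D}(X,X')\simeq H^{q}(X,X';\C)$ of \eqref{reldR} and the \iso\ $H^{0,q}_{\bar\vt}(X,X')\simeq H^{q}(X,X';\scO)$ of Theorem~\ref{thdrel2} are both instances of the ``relative de~Rham type theorem'' applied respectively to the fine resolutions $\scE^{(\bullet)}\to\C$ and $\scE^{(0,\bullet)}\to\scO$. The morphism of resolutions in the proposition — namely the commuting ladder with vertical maps $\iota:\C\hra\scO$ and $\rho^{\bullet}$ — is precisely the data that makes these two applications of the theorem compatible, so the induced square on relative cohomology commutes by functoriality of that construction. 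I would then state that the bottom map $\iota$ is the one induced on local cohomology by the sheaf inclusion $\C_{X}\hra\scO_{X}$.

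The main obstacle I anticipate is bookkeeping rather than conceptual: one must make sure that the ``compute with a two-element covering'' reduction is performed in a way that is simultaneously compatible with $d$, $\bp$, and $\rho$, and that the naturality statement for the relative de~Rham type theorem of \cite{Su11} is applicable to a morphism of resolutions covering a nontrivial sheaf map $\iota$. Once that functoriality is granted (it is essentially built into the flabby-resolution definition of $H^{q}(X,X';-)$, since a morphism of sheaves lifts to a morphism of flabby resolutions unique up to homotopy, and a fine resolution maps to a flabby one compatibly), the square commutes automatically and no explicit cocycle chase is needed.
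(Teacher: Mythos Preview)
Your proposal is correct and follows the same approach the paper intends: the corollary is stated without proof as an immediate consequence of the preceding proposition, the point being precisely that the morphism of fine resolutions $\rho^{\bullet}:\scE^{(\bullet)}\to\scE^{(0,\bullet)}$ over $\iota:\C\hra\scO$ induces, by functoriality of the relative de~Rham type theorem in \cite{Su11}, the commutative square. Your componentwise definition of $\rho^{q}$ on $\scE^{(\bullet)}(\V,\V')$ and the appeal to naturality are exactly the details one would supply to make this explicit.
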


Recall that we have the analytic de~Rham complex
\[
0\lra\C\overset{\iota}\lra\scO\overset{d}\lra\scO^{(1)}\overset{d}\lra\cdots\overset{d}\lra\scO^{(n)}\lra 0
\]
and we have an \iso\ of complexes (cf. Proposition \ref{propdcomp2})\,:
\begin{equationth}\label{lcrd}
\SelectTips{cm}{}
\xymatrix@C=.5cm
@R=.4cm
{ 0\ar[r]& H^{q}_{D}(X,X')\ar[r]^-{\rho^{q}} \ar@{-}[d]^-{\wr}&H^{0,q}_{\bar\vt}(X,X')\ar[r]^-{\partial}\ar@{-}[d]^-{\wr} &H^{1,q}_{\bar\vt}(X,X')\ar@{-}[d]^-{\wr}\ar[r]^-{\partial}&\cdots\ar[r]^-{\partial}&H^{n,q}_{\bar\vt}(X,X')\ar[r]\ar@{-}[d]^-{\wr}&0\\
0\ar[r] & H^{q}(X,X';\C) \ar[r]^-{\iota} & H^{q}(X,X';\scO) \ar[r]^-{d}&H^{q}(X,X';\scO^{(1)})\ar[r]^-{d}&\cdots\ar[r]^-{d}&H^{q}(X,X';\scO^{(n)})\ar[r]&0.}
\end{equationth}

Although the following appears to be well-known, we give a proof for the sake of completeness. 

\begin{theorem}\label{thexactpcd} If $H^{q}(X,X';\C)=0$ and $H^{q}(X,X';\scO^{(p)})=0$ for $p\ge 0$ and $q\ne q_{0}$, then 
\[
0\ra H^{q_{0}}(X,X';\C)\overset{\iota}\ra H^{q_{0}}(X,X';\scO)\overset{d}\ra H^{q_{0}}(X,X';\scO^{(1)})\overset{d}\ra\cdots\overset{d}\ra H^{q_{0}}(X,X';\scO^{(n)})\ra 0
\]
is exact.
\end{theorem}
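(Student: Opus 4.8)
The plan is to deduce the statement directly from the isomorphism of complexes displayed in \eqref{lcrd}. That diagram already tells us that the bottom row, the analytic de~Rham complex with coefficients in the relative cohomology groups $H^{q}(X,X';\scO^{(p)})$, is isomorphic (as a complex) to the top row, which is built from the relative Dolbeault groups $H^{p,q}_{\bar\vt}(X,X')$ linked by the differential $\partial$. So it suffices to prove exactness of the top row under the stated vanishing hypotheses; equivalently, one can work entirely on the level of sheaf cohomology and show that the analytic de~Rham complex of the $H^{q_0}(X,X';\scO^{(p)})$ is exact. I would phrase the argument in the latter, more transparent form, using the hypercohomology spectral sequence of the analytic de~Rham resolution $0\to\C\to\scO\to\scO^{(1)}\to\cdots\to\scO^{(n)}\to 0$.

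First I would set up the first spectral sequence of the double complex obtained by applying a flabby resolution to each term of the analytic de~Rham complex (or, equivalently, invoke the standard hypercohomology spectral sequence for the complex $\scO^{(\bullet)}$ on the pair $(X,X')$). Its $E_1$ page has $E_1^{p,q}=H^{q}(X,X';\scO^{(p)})$, with $d_1$ the map induced by $d=\partial$; it converges to the hypercohomology $\mathbb H^{p+q}(X,X';\scO^{(\bullet)})$. Because the analytic de~Rham complex is a resolution of $\C_X$ (Poincaré holomorphic lemma on a complex manifold), this hypercohomology is just $H^{*}(X,X';\C)$. Now invoke the hypotheses: $H^{q}(X,X';\scO^{(p)})=0$ for $q\ne q_0$, so the $E_1$ page is concentrated in the single row $q=q_0$; a spectral sequence concentrated in one row degenerates past $E_2$, giving $E_2^{p,q_0}=\mathbb H^{p+q_0}(X,X';\scO^{(\bullet)})=H^{p+q_0}(X,X';\C)$. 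But by the second hypothesis $H^{n}(X,X';\C)=0$ as well for $n\ne q_0$, i.e. $H^{p+q_0}(X,X';\C)=0$ for $p\ne 0$, and equals $H^{q_0}(X,X';\C)$ for $p=0$. The statement that $E_2^{p,q_0}=0$ for $p\ge 1$ is precisely exactness of the complex $H^{q_0}(X,X';\scO)\overset{d}\to H^{q_0}(X,X';\scO^{(1)})\overset{d}\to\cdots$ in positive degrees, and $E_2^{0,q_0}=H^{q_0}(X,X';\C)$, read against the edge map $\iota:H^{q_0}(X,X';\C)\to H^{q_0}(X,X';\scO)$, gives exactness at the left end. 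Assembling these gives the asserted exact sequence.

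The one point needing care—and the likeliest obstacle—is the bookkeeping at both ends of the complex: one must check that the edge homomorphism of the spectral sequence coincides with the map $\iota$ coming from $\C\hookrightarrow\scO$, and that the complex genuinely terminates at $\scO^{(n)}$ (this is automatic since $\scO^{(p)}=0$ for $p>n$ on an $n$-dimensional manifold, but it should be stated). A cleaner alternative, which avoids spectral-sequence language altogether and matches the style of the paper, is to argue inductively along the exact rows already in place: break the analytic de~Rham complex into short exact sequences of sheaves $0\to\mathcal Z^{(p)}\to\scO^{(p)}\to\mathcal Z^{(p+1)}\to 0$ (with $\mathcal Z^{(0)}=\C$, $\mathcal Z^{(p)}=\ker(d:\scO^{(p)}\to\scO^{(p+1)})$), take the long exact sequence of each on the pair $(X,X')$, and use the vanishing of $H^{q}(X,X';\scO^{(p)})$ for $q\ne q_0$ to get isomorphisms $H^{q_0+k}(X,X';\mathcal Z^{(p)})\cong H^{q_0+k-1}(X,X';\mathcal Z^{(p+1)})$ for $k\ge 1$, together with the four-term exact pieces in degree $q_0$ that splice together into exactly the displayed long sequence; the hypothesis $H^{q}(X,X';\C)=0$ for $q\ne q_0$ then kills the terms that would otherwise obstruct exactness at the two ends. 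I would present this inductive version in the paper, since it is elementary and self-contained given the relative long exact sequence already recorded in the excerpt, and remark that it is just the degenerate case of the hypercohomology spectral sequence.
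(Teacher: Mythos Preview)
Your primary argument via the hypercohomology spectral sequence of $\scO^{(\bullet)}$ is essentially the paper's proof: the paper builds the same double complex of flabby resolutions but includes $\C$ as the column $p=-1$, so that its rows are exact and the second spectral sequence forces the total cohomology to vanish, whereas you leave $\C$ out and instead identify the abutment with $H^{*}(X,X';\C)$---two equivalent bookkeepings of the same degeneration. Your alternative inductive splicing via the kernel sheaves $\mathcal Z^{(p)}=\ker(d:\scO^{(p)}\to\scO^{(p+1)})$ is a genuinely different, more elementary route that also works; the paper does not take it, but it would be a perfectly acceptable substitute and has the virtue of avoiding spectral-sequence language entirely.
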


\begin{proof} 
Let $(\scF^{\bullet,\bullet}, d_{1}, d_{2})$ be a double complex of flabby sheaves \st, in the following diagram, each row
is exact, each column is a flabby resolution and the diagram consisting of the first and second rows is commutative (note
that $d_{1}\circ d_{2}+d_{2}\circ d_{1}=0$).


\[
\SelectTips{cm}{}
\xymatrix@R=.6cm
@C=.8cm{{}& 0\ar[d] &0 \ar[d] & 0\ar[d]& {} &0\ar[d]\\
0 \ar[r] & \C\ar[r]^-{\iota} \ar[d]& \scO \ar[r]^-{d}\ar[d]& \scO^{(1)}\ar[r]^-{d}\ar[d]& \cdots\ar[r]^-{d} &\scO^{(n)}\ar[r]\ar[d]&0\\
0 \ar[r] & \scF^{-1,0} \ar[r]^-{d_{1}}\ar[d]^-{d_{2}}& \scF^{0,0}\ar[r]^-{d_{1}} \ar[d]^-{d_{2}}& 
\scF^{1,0} \ar[r]^-{d_{1}}\ar[d]^-{d_{2}}&\cdots\ar[r]^-{d_{1}} &\scF^{n,0}\ar[r]\ar[d]^-{d_{2}}&0\\
0 \ar[r] & \scF^{-1,1} \ar[r]^-{d_{1}}\ar[d]^-{d_{2}}& \scF^{0,1}\ar[r]^-{d_{1}} \ar[d]^-{d_{2}}& \scF^{1,1}\ar[r]^-{d_{1}}\ar[d]^-{d_{2}}&\cdots\ar[r]^-{d_{1}} &\scF^{n,1}\ar[r]\ar[d]^-{d_{2}}& 0\\
{} &\vdots &\vdots&\vdots&{}&\vdots.}
\]
We have the associated double complex 
$(F^{\bullet,\bullet}, d_{1}, d_{2})$, $F^{p,q}=\scF^{p,q}(X,X')$. 
Denoting by $F^{\bullet}$ the single complex associated with $F^{\bullet,\bullet}$, consider the first
spectral sequence
\[
'\hspace{-.6mm}E^{p,q}_{2}=H^{p}_{d_{1}}H^{q}_{d_{2}}(F^{\bullet,\bullet})\Longrightarrow H^{p+q}(F^{\bullet}).
\]
By assumption $H^{q}_{d_{2}}(F^{p,\bullet})=0$ for $p\ge -1$ and $q\ne q_{0}$ and $H^{q_{0}}_{d_{2}}(F^{p,\bullet})=H^{q_{0}}(X,X';\scO^{(p)})$. Thus $H^{p}_{d_{1}}H^{q_{0}}(X,X';\scO^{(\bullet}))\simeq H^{p+q_{0}}(F^{\bullet})$.
On the other hand, in the second spectral sequence
\[
''\hspace{-.6mm}E^{q,p}_{2}=H^{q}_{d_{2}}H^{p}_{d_{1}}(F^{\bullet,\bullet})\Longrightarrow H^{p+q}(F^{\bullet}),
\]
$H^{p}_{d_{1}}(F^{\bullet,q})=0$, for $p\ge -1$ and $q\ge 0$, so that $H^{r}(F^{\bullet})=0$ for all $r$.
\end{proof}

As an application, combining with (\ref{lcrd}), we have the de~Rham complex for ``hyperforms'' (cf. \eqref{hypdR} below and \cite{HIS}). 


\lsection{Cup product and integration}\label{seccupint}

Let $X$ be a complex \mfd\ of dimension $n$ and $\W=\{W_{\a}\}_{\a\in I}$ a covering of $X$.
\subsection{Cup product}

We have the complex  $\scE^{(p,\bullet)}(\W)$ as considered in Subsection \ref{ssecCD}. We define the ``cup product"
\begin{equationth}\label{cupgen}
\scE^{(p,q)}(\W)\times \scE^{(p',q')}(\W)\lra \scE^{(p+p',q+q')}(\W)
\end{equationth}
by assigning to $\xi$ in $\scE^{(p,q)}(\W)$ and $\eta$ in $\scE^{(p',q')}(\W)$ the cochain
$\xi\smallsmile\eta$ in $\scE^{(p+p',q+q')}(\W)$ given by
\[
(\xi\smallsmile\eta)_{\a_0\dots\a_r}
=\sum_{\nu=0}^r(-1)^{(p+q-\nu)(r-\nu)}\xi_{\a_0\dots\a_{\nu}}\wedge\eta_{\a_{\nu}\dots\a_r}.
\]
Then $\xi\smallsmile\eta$ is bilinear in $(\xi,\eta)$ and we have
\begin{equationth}\label{cupCDcochain}
\bar\vt(\xi\smallsmile\eta)=\bar\vt\xi\smallsmile\eta+(-1)^{p+q}\xi\smallsmile\bar\vt\eta.
\end{equationth}
Thus it induces the cup product
\begin{equationth}\label{cupCD}
H^{p,q}_{\bar\vt}(\W)\times H^{p',q'}_{\bar\vt}(\W)\lra 
H^{p+p',q+q'}_{\bar\vt}(\W)
\end{equationth}
compatible, via the \iso\ of Theorem \ref{thsummary}.\,1,
with the  product in the
Dolbeault cohomology induced by the exterior product of forms. 

If $\W'$ is a subcovering of $\W$, the cup product \eqref{cupgen} induces
\[
\scE^{(p,q)}(\W,\W')\times \scE^{(p',q')}(\W)\lra \scE^{(p+p',q+q')}(\W,\W')
\]
which in turn induces the cup product
\[
H^{p,q}_{\bar\vt}(\W,\W')\times H^{p',q'}_{\bar\vt}(\W)\lra 
H^{p+p',q+q'}_{\bar\vt}(\W,\W').
\]

In the case of a covering $\V=\{V_{0},V_{1}\}$ with two open sets, the cup product
\begin{equationth}\label{cup}
\scE^{(p,q)}(\V)\times \scE^{(p',q')}(\V)\lra \scE^{(p+p',q+q')}(\V),
\end{equationth}%
assigs to $\xi$ in $\scE^{(p,q)}(\V)$ and $\eta$ in $\scE^{(p',q')}(\V)$ the cochain
$\xi\smallsmile\eta$ in $\scE^{(p+p',q+q')}(\V)$ given by
\[
\begin{aligned}
&(\xi\smallsmile\eta)_0=\xi_0\wedge\eta_0,\quad(\xi\smallsmile\eta)_1=\xi_1\wedge\eta_1\quad\text{and}\\
&(\xi\smallsmile\eta)_{01}=(-1)^{p+q}\xi_0\wedge\eta_{01}+\xi_{01}\wedge\eta_1.
\end{aligned}
\]

Suppose $S$ is a closed set in $X$. Let $V_{0}=X\ssm S$ and $V_{1}$ a \nbd\ of $S$ and consider the
covering $\V=\{V_{0},V_{1}\}$. Then we see that \eqref{cup} induces a pairing
\begin{equationth}\label{cuprelcoch}
\scE^{(p,q)}(\V,V_{0})\times \scE^{(p',q')}(V_{1})\lra \scE^{(p+p',q+q')}(\V,V_{0}),
\end{equationth}%
assigning to $\xi=(\xi_{1},\xi_{01})$  and $\eta_{1}$ the cochain
$(\xi_1\wedge\eta_1,\xi_{01}\wedge\eta_1)$.  It induces the pairing
\begin{equationth}\label{cuprel}
H^{p,q}_{\bar\vt}(X,X\ssm S)\times H^{p',q'}_{\bp}(V_{1})\lra H^{p+p',q+q'}_{\bar\vt}(X,X\ssm S).
\end{equationth}

More generally, let $S_{i}$ be a closed set in $X$,  $i=1,2$. Let $V_{0}^{(i)}=X\ssm S_{i}$ and $V_{1}^{(i)}$ a \nbd\ of $S_{i}$ and consider the
covering $\V^{(i)}=\{V_{0}^{(i)},V_{1}^{(i)}\}$ of $X$. We set $S=S_{1}\cap S_{2}$, $V_{0}=X\ssm S$ and 
$V_{1}$ an open \nbd\ of $S$ contained in $V_{1}^{(1)}\cap V_{1}^{(2)}$ and consider the covering 
$\V=\{V_{0},V_{1}\}$ of $X$.
The set $V_{0}$
 is covered by two open sets $V_{0}^{(1)}$ and $V_{0}^{(2)}$.
Let $\{\rho_{1},\rho_{2}\}$ be a partition of unity subordinate to the covering.
We define a paring
\begin{equationth}\label{cuprelcochgen}
\scE^{(p,q)}(\V^{(1)},V_{0}^{(1)})\times \scE^{(p',q')}(\V^{(2)},V_{0}^{(2)})\lra \scE^{(p+p',q+q')}(\V,V_{0})
\end{equationth}
by
\[
(\xi_{1},\xi_{01})\ssl (\eta_{1},\eta_{01})
=(\xi_{1}\wedge\eta_{1},\rho_{1}\xi_{01}\wedge\eta_{1}+(-1)^{p+q}(\rho_{2}\xi_{1}\wedge\eta_{01}-\bp\rho_{1}\wedge\xi_{01}\wedge\eta_{01}).
\]
Then we see that the equality \eqref{cupCDcochain} also holds and we have the product
\begin{equationth}\label{cuprelgen}
H^{p,q}_{\bar\vt}(X,X\ssm S_{1})\times H^{p',q'}_{\bar\vt}(X,X\ssm S_{2})\lra H^{p+p',q+q'}_{\bar\vt}(X,X\ssm S).
\end{equationth}
It is not difficule to see that (\ref{cuprelgen}) does not depend on the choice of the partition of unity 
$\{\rho_{1},\rho_{2}\}$.

In particular, if $S_{2}=X$, we may set $\rho_{1}\equiv 1$ and $\rho_{2}\equiv 0$ and (\ref{cuprelcochgen})
reduces to (\ref{cuprelcoch}).

The above may be used to define, for two pairs $(X,S)$ and $(Y,T)$ the product
\[
H^{p,q}_{\bar\vt}(X,X\ssm S)\times H^{p',q'}_{\bar\vt}(Y,Y\ssm T)\lra H^{p+p',q+q'}_{\bar\vt}(X\times Y,X\times Y\ssm S\times T).
\]

\subsection{Integration}

Recall that there is a natural morphism
$H^{n,q}_{\bar\vt}(\W)\ra H^{n+q}_{D}(\W)$ (cf. (\ref{DdR})).
Thus the integration
 on $H^{2n}_{D}(\W)$ carries directly on to  $H^{n,n}_{\bar\vt}(\W)$.
We briefly recall the integration theory on the \v{C}ech-de Rham cohomology and refer to \cite{Leh2} and \cite{Su2} for details.

Let $X$ be a $C^{\infty}$ \mfd\ of dimension $m$
and $\W=\{W_{\a}\}_{\a\in I}$  a covering of $X$. 
We assume that  $I$ is an ordered 
set such that if $W_{\a_0\dots\a_q}\ne\emptyset$, the induced order on the subset
$\{\a_0,\dots,\a_q\}$ is total. We set
\[
I^{(q)}=\{\,(\a_0,\dots,\a_q)\in I^{q+1}\mid \a_0<\cdots<\a_q\,\}.
\]

A {\em honeycomb system} adapted to $\W$ is a collection $\{R_\a\}_{\a\in I}$ 
\st
\begin{enumerate}
\item[(1)] each $R_\a$ is an $m$-dimensional \mfd\ with piecewise $C^{\infty}$ boundary in $W_\a$ and $X=\bigcup_\a R_\a$,
\item[(2)]  if $\a\ne\beta$, then $\op{Int}R_\a\cap\op{Int}R_\beta=\emptyset$,
\item[(3)]  if $W_{\a_0\dots \a_q}\ne\emptyset$ and if the $\a_\nu$'s are distinct, then $R_{\a_0\dots \a_q}$
 is an $(m-q)$-dimensional \mfd\ with piecewise $C^{\infty}$  boundary,
\item[(4)]  if the set $\{\a_0,\dots,\a_q\}$ is maximal, then $R_{\a_0\dots \a_q}$ has no boundary.
\end{enumerate}

In the above, we denote by $\op{Int}R$  the interior of a subset $R$ in $X$ and we set 
$R_{\a_0\dots \a_q}=\bigcap_{\nu=0}^q R_{\a_\nu}$,
which is equal to $\bigcap_{\nu=0}^p\partial R_{\a_\nu}$ by (2) above.  Also, $\{\a_0,\dots,\a_q\}$
being maximal means that if $W_{\a,\a_0,\dots,\a_q}\ne\emptyset$, then $\a$ is in $\{\a_0,\dots,\a_q\}$.


Suppose $X$ is oriented. Let $R$ be an $m$-dimensional \mfd\ with $C^{\infty}$ boundary $\partial R$ in $X$. Then 
$R$ is oriented so that it has the same orientation as $X$. In this case, the  boundary $\partial R$ is orientable and is oriented as follows.
Let $p$ be a point in $\partial R$. There exist a \nbd\ $U$ of $p$ and a $C^{\infty}$ coordinate system $(x_{1},\dots,x_{m})$
on $U$ \st\ $R\cap U=\{\,x\in U\mid x_{1}\le 0\,\}$. We orient $\partial R$ so that if $(x_{1},\dots,x_{m})$ is
a positive coordinate system on $X$, $(x_{2},\dots,x_{m})$ is a positive coordinate system on $\partial R$.
A \mfd\ with piecewise $C^{\infty}$  boundary is oriented similarly.

If $\{R_{\a}\}$ is a honeycomb system, we orient $R_{\a_0\dots \a_q}$ by the following rules\,:
\begin{enumerate}

\item[(1)] each $R_\a$  and its boundary are oriented as above,
\item[(2)] for $(\a_0,\dots,\a_q)$ in $I^{(q)}$, $q\ge 1$, $R_{\a_0\dots \a_q}$ is oriented as a part of 
$\partial R_{\a_0\dots \a_{q-1}}$,
\item[(3)]  for  $(\a_0,\dots,\a_q)$ in $I^{q+1}$, we set
\[
R_{\a_0\dots\a_q}=\begin{cases} \sgn\rho\cdot R_{\a_{\rho(0)}\dots\a_{\rho(p)}}&\quad\text{if}\ W_{\a_0\dots\a_q}\ne\emptyset\ \text{and the}\ \a_i\text{'s are distnct},\\
\emptyset&\quad\text{otherwise},
\end{cases}
\]
where $\rho$ is the permutation \st\ $\a_{\rho(0)}<\cdots <\a_{\rho(q)}$.  
\end{enumerate}
With the above convention, we may write\,:
\begin{equationth}\label{bdryhoney}
\partial R_{\a_0\dots \a_q}=\sum_{\a\in I} R_{\a_0\dots \a_q\a}.
\end{equationth}

Suppose moreover that $X$ is compact, then each $R_\a$ is 
compact and we may define the integration
\[
\int_X:\scE^{(m)}(\W)\lra \C
\]
by setting
\[
\int_X\xi=\sum_{q=0}^m\Big(\sum_{(\a_0,\dots,\a_q)\in
I^{(q)}}\int_{R_{\a_0\dots\a_q}}\xi_{\a_0\dots\a_q}\Big)\qquad\text{for}\ \ \xi\in \scE^{(m)}(\W).
\]
Then  we see that 
it induces the integration on the cohomology 
\[
\int_X:H^{m}_{D}(\W)\lra \C,
\]
which is compatible
with the usual
integration on the de~Rham cohomology $H^{m}_{d}(X)$. 

\

Now let $X$ be a complex \mfd\ of dimension $n$ and $\W$ a covering of $X$. As a real \mfd, $X$ is always orientable and we specify an orientation in the sequel. However we note that the orientation we consider is not necessarily the ``usual
one''. Here we say an orientation of $X$ is usual if $(x_{1},y_{1},\dots,x_{n},y_{n})$ is a positive coordinate
system when  $(z_{1},\dots,z_{n})$, $z_{i}=x_{i}+\sqrt{-1}y_{i}$, is a coordinate system on $X$.

Using the natural morphism $H^{n,q}_{\bar\vt}(\W)\ra H^{n+q}_{D}(\W)$,
if $X$ is compact,
we may define the integration on $H^{n,n}_{\bar\vt}(\W)$ as the composition
\begin{equationth}\label{intCD}
H^{n,n}_{\bar\vt}(\W)\lra H^{2n}_{D}(\W)\overset{\int_{X}}\lra\C.
\end{equationth}

Let $K$ be a
compact  set in $X$ ($X$ may not be compact). Letting $V_{0}=X\ssm K$ and $V_{1}$ a \nbd\ of $K$, we consider the coverings 
$\V=\{V_{0},V_{1}\}$ and $\V'=\{V_{0}\}$. Let $\{R_0,R_1\}$ be a  honeycomb system adapted to $\V$. 
In this case we may take as $R_{1}$ a compact $2n$-dimensional  \mfd\ with $C^{\infty}$ boundary  in $V_{1}$ containing $K$
in its interior and set $R_{0}=X\ssm \op{Int}R_{1}$.
Then $R_{01}=-\partial R_{1}$ (cf. \eqref{bdryhoney}) and we 
have the integration
on $\scE^{(n,n)}(\V,\V')$ given by, for $\xi=(\xi_{1},\xi_{01})$,
\[
\int_X\xi=\int_{R_1}\xi_1+\int_{R_{01}}\xi_{01}.
\]
This again induces the integration on the cohomology
\begin{equationth}\label{intDrel}
\int_X:H^{n,n}_{\bar\vt}(X,X\ssm K)\lra \C.
\end{equationth}

\lsection{Local duality morphism}\label{secld}


Let $X$ be a complex \mfd\ of dimension $n$.

First, if $X$ is compact, the bilinear pairing
\[
 H^{p,q}_{\bar\vt}(\W)\times H^{n-p,n-q}_{\bar\vt}(\W)\overset{\smallsmile}\lra H^{n,n}_{\bar\vt}(\W)
 \overset{\int_{X}}\lra \C
\]
given
as the composition of the cup product \eqref{cupCD} and the integration \eqref{intCD} induces the Kodaira-Serre duality
\begin{equationth}\label{3.4}
KS_{X}:H^{p,q}_{\bp}(X)\simeq H^{p,q}_{\bar\vt}(\W)\overset{\sim}{\ra}
H^{n-p,n-q}_{\bar\vt}(\W)^*\simeq H^{n-p,n-q}_{\bp}(X)^*,
\end{equationth}
where, for a complex vector space $\sV$, $\sV^{*}$ denotes its algebraic dual.

Now we consider the case where $X$ may not be compact. Let $\scE^{(p,q)}_{c}(X)$ denote the space of $(p,q)$-forms
with compact support on $X$. The $q$-th cohomology of the complex $(\scE^{(p,\bullet)}_{c}(X),\bp)$ will be denoted
by $H^{p,q}_{\bp, c}(X)$.
The bilinear pairing
\[
 \scE^{(p,q)}(X)\times \scE^{(n-p,n-q)}_{c}(X)\overset\wedge\lra \scE^{(n,n)}_{c}(X)\overset{\int_{X}}\lra \C
\]
induces the Serre morphism
\[
S_{X}:H^{p,q}_{\bp}(X)\lra H^{n-p,n-q}_{\bp, c}(X)^{*}.
\]
Let $K$ be a compact set in $X$.
The cup product \eqref{cuprel}
followed by the integration (\ref{intDrel}) gives a bilinear pairing
\[
H^{p,q}_{\bar\vt}(X,X\ssm K)\times H^{n-p,n-q}_{\bp}(V_1)\overset{\smallsmile}\lra H^{p,q}_{\bar\vt}(X,X\ssm K)\overset{\int_{X}}\lra\C.
\]
Setting
\[
H^{n-p,n-q}_{\bp}[K]
=\lim_{\underset{V_{1}\supset K}\lra}H^{n-p,n-q}_{\bp}(V_1),
\]
where $V_{1}$ runs through open \nbd s of $K$,
this induces a morphism
\begin{equationth}\label{3.5}
\bar A_{X,K}:H^{p,q}_{\bar\vt}(X,X\ssm K)\lra H^{n-p,n-q}_{\bp}[K]^*
\end{equationth}
which we call the {\em complex analytic Alexander morphism}, or the {\em $\bp$-Alexander morphism} for short.
We have the following commutative diagram\,:
\[
\SelectTips{cm}{}
\xymatrix
{H^{p.q}_{\bar\vt}(X,X\ssm K)\ar[r]^-{j^*}\ar[d]^{\bar A_{X,K}} &H^{p.q}_{\bp}(X)\ar[d]^{S_{X}}\\
H^{n-p,n-q}_{\bp}[K]^{*} \ar[r]^-{j_*}&\ H^{n-p,n-q}_{\bp,c}(X)^{*},}
\]
which will be extended to a commutative diagram of long exact sequences (cf. Theorem~\ref{thlongdual} below).

\paragraph{An exact sequence\,:}

Let $S$ be a closed set in $X$. Since a differential form on $X\ssm S$ with compact support may naturally be
 thought of as a form on $X$ with compact support, there is a natural morphism
\[
i^{*}:H^{p,q}_{\bp,c}(X\ssm S)\lra H^{p,q}_{\bp,c}(X).
\]

We also have a natural morphism
\[
j^{*}:H^{p,q}_{\bp,c}(X)\lra H^{p,q}_{\bp}[S]=\lim_{\underset{V_{1}\supset S}\lra} H^{p,q}_{\bp}(V_{1})
\]
as the composition 
\[
H^{p,q}_{\bp,c}(X)\lra H^{p,q}_{\bp}(X)\lra H^{p,q}_{\bp}[S].
\]

Let $K$ be a compact set in $X$. 
We define a morphism
\begin{equationth}\label{homgam}
\gamma^{*}:H^{p,q}_{\bp}[K]\lra H^{p,q+1}_{\bp,c}(X\ssm K)
\end{equationth}
as follows. Take an element $a$ in $H^{p,q}_{\bp}[K]$. Then it is represented by $[\eta]$ in $H^{p,q}_{\bp}(V_{1})$
for some \nbd\ $V_{1}$ of $K$, which may be assumed to be relatively compact. Let $V_{0}=X\ssm K$ and consider the 
covering $\V=\{V_{0},V_{1}\}$ of $X$. Let $\{\rho_{0},\rho_{1}\}$ be a $C^{\infty}$ partition of unity subordinate to $\V$. Then, noting that the support of  $\bp\rho_{1}$ is in $V_{01}=V_{1}\ssm K$, we see that
$\eta\wedge\bp\rho_{1}$ is a $\bp$-closed $(p,q+1)$-form with
compact support in $X\ssm K$. 

\begin{lemma} The class of $\eta\wedge\bp\rho_{1}$ in $H^{p,q+1}_{\bp,c}(X\ssm K)$ is uniquely determined by
$a$.
\end{lemma}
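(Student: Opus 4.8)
The plan is to show that the Dolbeault class of $\eta\wedge\bp\rho_1$ in $H^{p,q+1}_{\bp,c}(X\ssm K)$ does not change if we replace (a) the representative $\eta$ by a cohomologous one on a possibly smaller neighborhood, (b) the neighborhood $V_1$ by a smaller one, or (c) the partition of unity $\{\rho_0,\rho_1\}$ by another one subordinate to $\V=\{V_0,V_1\}$. Since an element $a\in H^{p,q}_{\bp}[K]$ is by definition an equivalence class under exactly these moves in the direct limit, establishing invariance under (a)--(c) will prove that $[\eta\wedge\bp\rho_1]$ depends only on $a$. Throughout, the key observation (already used to make sense of the statement) is that $\bp\rho_1=-\bp\rho_0$ is supported in $V_{01}=V_1\ssm K$, so all the forms in question have compact support in $X\ssm K$; and that $\bp(\eta\wedge\bp\rho_1)=\pm\bp\eta\wedge\bp\rho_1=0$ since $\eta$ is $\bp$-closed.

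First I would handle (c). Let $\{\rho_0,\rho_1\}$ and $\{\rho_0',\rho_1'\}$ be two partitions of unity subordinate to $\V$. Then $\rho_1-\rho_1'=\rho_0'-\rho_0$ is a $C^\infty$ function supported in $V_{01}=V_1\ssm K$ (it vanishes near $K$ because both $\rho_1,\rho_1'$ are $\equiv 1$ there, and vanishes outside $V_1$ because both are $\equiv 0$ there). Hence $\eta\wedge(\rho_1-\rho_1')$ is a $(p,q)$-form with compact support in $X\ssm K$, and
\[
\bp\big(\eta\wedge(\rho_1-\rho_1')\big)=(-1)^{?}\,\eta\wedge\bp(\rho_1-\rho_1')=\pm\,\eta\wedge\bp\rho_1\mp\eta\wedge\bp\rho_1',
\]
using $\bp\eta=0$. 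Thus $\eta\wedge\bp\rho_1$ and $\eta\wedge\bp\rho_1'$ differ by a $\bp$-exact form with compact support in $X\ssm K$, so they define the same class in $H^{p,q+1}_{\bp,c}(X\ssm K)$. (One must track the sign coming from $\bp(\eta\wedge f)$ for a function $f$, but it is a fixed sign depending only on $(p,q)$ and does not affect the argument.)

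Next, for (b), if $V_1'\subset V_1$ is a smaller relatively compact neighborhood of $K$ and we use the restriction $\eta|_{V_1'}$, choose a partition of unity $\{\rho_0',\rho_1'\}$ subordinate to $\{X\ssm K,V_1'\}$. Since $\rho_1'$ is also admissible for the larger covering $\{X\ssm K,V_1\}$ (it is supported in $V_1'\subset V_1$), part (c) already shows that using $\rho_1'$ with $V_1$ gives the same class as using any partition subordinate to $\{X\ssm K,V_1\}$; and $\eta\wedge\bp\rho_1'$ is literally the same form whether we view $\eta$ on $V_1$ or on $V_1'$ near the support of $\bp\rho_1'$. So the class is unchanged. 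Finally, for (a), suppose $\eta'=\eta+\bp\zeta$ on $V_1'$ for some $\zeta\in\scE^{(p,q-1)}(V_1')$; with a fixed partition $\{\rho_0',\rho_1'\}$ subordinate to $\{X\ssm K,V_1'\}$ we get $\eta'\wedge\bp\rho_1'-\eta\wedge\bp\rho_1'=\bp\zeta\wedge\bp\rho_1'=\pm\bp(\zeta\wedge\bp\rho_1')$, again a $\bp$-exact form with compact support in $X\ssm K$ (note $\zeta\wedge\bp\rho_1'$ is supported in $V_1'\ssm K$, hence has compact support there since $V_1'$ is relatively compact). Combining (a), (b), (c), the class $[\eta\wedge\bp\rho_1]\in H^{p,q+1}_{\bp,c}(X\ssm K)$ depends only on $a\in H^{p,q}_{\bp}[K]$, which is the assertion.

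The only mildly delicate point — and the one I would be most careful about — is bookkeeping: making sure in each case that the auxiliary form (whether $\eta\wedge(\rho_1-\rho_1')$, $\zeta\wedge\bp\rho_1'$, etc.) genuinely has \emph{compact} support in $X\ssm K$, not merely closed support. This is where relative compactness of $V_1$ (and of $V_1'$) is used: a form supported in the relatively compact set $V_1\ssm K$ and vanishing near $K$ automatically has compact support in the open set $X\ssm K$. Once that is in place, the rest is the routine Leibniz-rule-plus-$\bp\eta=0$ computation, and the sign conventions play no essential role.
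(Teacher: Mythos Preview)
Your proof is correct and follows essentially the same approach as the paper. The only difference is organizational: where you separate the invariance into three independent moves (change of partition, of neighborhood, of representative), the paper compares two arbitrary representatives $(\eta,V_1,\rho_1)$ and $(\eta',V_1',\rho_1')$ with $V_1'\subset V_1$ in one stroke, writing
\[
\eta'\wedge\bp\rho_1' - \eta\wedge\bp\rho_1 = (\eta'-\eta)\wedge\bp\rho_1' + \eta\wedge(\bp\rho_1'-\bp\rho_1)
\]
and showing each summand is $\bp$-exact with compact support in $X\ssm K$ via exactly the same two computations you use in (a) and (c).
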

\begin{proof}
Suppose $a$ is also  represented by $[\eta']$ in $H^{p,q}_{\bp}(V'_{1})$, $V'_{1}\subset V_{1}$. Then there exists a $(p,q-1)$-form $\xi$
on $V'_{1}$ \st\ $\eta'-\eta=\bp\xi$ on $V'_{1}$. We compute
\[
\eta'\wedge\bp\rho'_{1}-\eta\wedge\bp\rho_{1}=(\eta'-\eta)\wedge\bp\rho'_{1}+\eta\wedge(\bp\rho'_{1}-\bp\rho_{1}).
\]
The form $\xi\wedge\bp\rho'_{1}$ is a $(p,q)$-form with compact support in $X\ssm K$ and $(\eta'-\eta)\wedge\bp\rho'_{1}=\bp(\xi\wedge\bp\rho'_{1})$. Also the support of $\rho'_{1}-\rho_{1}$ is in $V_{01}$
and $\eta\wedge(\bp\rho'_{1}-\bp\rho_{1})=(-1)^{p+q}\bp((\rho'_{1}-\rho_{1})\eta)$. 
\end{proof}

Thus we define the morphism (\ref{homgam}) by $\gamma^{*}(a)=[\eta\wedge\bp\rho_{1}]$.

\begin{theorem}\label{thexactloc} The following sequence is exact\,{\rm :}
\[
\cdots\lra  H^{p,q}_{\bp,c}(X\ssm K)\overset{i^{*}}\lra H^{p,q}_{\bp,c}(X)\overset{j^{*}}\lra H^{p,q}_{\bp}[K]\overset{\gamma^{*}}\lra H^{p,q+1}_{\bp,c}(X\ssm K)
\overset{i^{*}}\lra\cdots.
\]
\end{theorem}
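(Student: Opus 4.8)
The plan is to identify the claimed sequence as the long exact cohomology sequence of a short exact sequence of complexes of $c$-supported forms, built from the honeycomb/covering picture used throughout the paper. Fix a compact set $K$, put $V_{0}=X\ssm K$, choose a relatively compact open neighborhood $V_{1}$ of $K$, and work with the covering $\V=\{V_{0},V_{1}\}$. The three terms of the sequence should be read, respectively, as the cohomology of $(\scE^{(p,\bullet)}_{c}(V_{0}),\bp)$, of $(\scE^{(p,\bullet)}_{c}(X),\bp)$, and of the direct limit over $V_{1}\supset K$ of $(\scE^{(p,\bullet)}(V_{1}),\bp)$. The first step is to show that, at the level of (pre-)complexes and after passing to the limit over shrinking $V_{1}$, one has a short exact sequence
\[
0\lra \scE^{(p,\bullet)}_{c}(X\ssm K)\overset{i^{*}}\lra \scE^{(p,\bullet)}_{c}(X)\overset{r}\lra \varinjlim_{V_{1}\supset K}\scE^{(p,\bullet)}(V_{1})\lra 0 ,
\]
where $i^{*}$ is extension by zero and $r$ is germ-along-$K$ of the restriction; exactness of $\varinjlim$ preserves exactness of complexes, so this will be genuinely exact.

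Exactness in the middle and on the left is essentially formal: $i^{*}$ is injective because a form on $X\ssm K$ that vanishes as a form on $X$ already vanished; and $r\circ i^{*}=0$ since a form supported away from $K$ has zero germ at $K$. Conversely, if $\omega\in\scE^{(p,\bullet)}_{c}(X)$ has zero germ along $K$, then $\omega$ vanishes on some neighborhood of $K$, hence is the extension by zero of its restriction to $X\ssm K$, which still has compact support — this gives $\ker r\subset\operatorname{im}i^{*}$. The one point needing care is surjectivity of $r$: given $\eta\in\scE^{(p,\bullet)}(V_{1})$, pick the partition of unity $\{\rho_{0},\rho_{1}\}$ subordinate to $\V$ as in the text and take $\rho_{1}\eta$, extended by zero; this has compact support in $X$ (because $V_{1}$ is relatively compact and $\operatorname{supp}\rho_{1}$ is closed in $X$ inside $V_{1}$) and agrees with $\eta$ on a neighborhood of $K$, so its germ is the class of $\eta$. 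Thus $r$ is onto. I would then invoke the standard long exact sequence in cohomology of a short exact sequence of complexes; the outgoing terms are exactly $H^{p,q}_{\bp,c}(X\ssm K)$, $H^{p,q}_{\bp,c}(X)$, and $H^{p,q}_{\bp}[K]=\varinjlim H^{p,q}_{\bp}(V_{1})$ (direct limit commutes with taking cohomology).

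The real content is to check that the connecting homomorphism coming out of the snake lemma coincides with the morphism $\gamma^{*}$ defined in the text via $a=[\eta]\mapsto[\eta\wedge\bp\rho_{1}]$; this is the step I expect to be the main obstacle, though it is a bookkeeping obstacle rather than a conceptual one. Unwinding the snake construction: represent $a\in H^{p,q}_{\bp}[K]$ by a $\bp$-closed $\eta$ on $V_{1}$, lift it through $r$ to $\rho_{1}\eta\in\scE^{(p,q)}_{c}(X)$ (extended by zero), apply $\bp$ to get $\bp(\rho_{1}\eta)=\bp\rho_{1}\wedge\eta$ since $\bp\eta=0$ on $V_{1}$ (here $\bp\rho_{1}$ is supported in $V_{01}=V_{1}\ssm K$, so this form is supported away from $K$), and observe that $\bp\rho_{1}\wedge\eta=\pm\,\eta\wedge\bp\rho_{1}$ is in the image of $i^{*}$, namely it is the extension by zero of a compactly supported $\bp$-closed $(p,q+1)$-form on $X\ssm K$. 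That class is exactly $\gamma^{*}(a)$ up to the sign $(-1)^{p+q}$ absorbed into the identification, and independence of all choices was already proved in the Lemma preceding the theorem. Hence the snake-lemma sequence is literally the asserted one, and exactness is immediate. I would close by remarking that independence of $V_{1}$ (hence that the limit is the right object) and of the partition of unity are exactly the content of the Lemma, so nothing further is needed.
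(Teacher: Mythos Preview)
Your argument is correct and takes a genuinely different route from the paper. The paper verifies exactness at each of the three spots by hand: for every inclusion $\op{Im}\subset\op{Ker}$ and its converse it writes down explicit forms built from $\rho_{0},\rho_{1}$ and checks the relevant identities (six short computations in all). You instead package everything into the short exact sequence of complexes
\[
0\lra \scE^{(p,\bullet)}_{c}(X\ssm K)\lra \scE^{(p,\bullet)}_{c}(X)\lra \varinjlim_{V_{1}\supset K}\scE^{(p,\bullet)}(V_{1})\lra 0
\]
and read off the long exact sequence at once, then identify the snake connecting map with $\gamma^{*}$. Your approach is cleaner and makes the structural reason for exactness transparent; it also explains the remark following the theorem (that this is the Godement/Komatsu sequence $H^{q}_{c}(X\ssm K)\ra H^{q}_{c}(X)\ra H^{q}(K)\ra\cdots$ in Dolbeault dress). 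The paper's approach, by contrast, avoids the small technical checks you had to make---that $\operatorname{supp}\rho_{1}$ is compact when $V_{1}$ is relatively compact, and that filtered colimits commute with cohomology---and keeps everything at the level of concrete representatives, which matches the style of the surrounding computations (e.g.\ Theorem~\ref{thlongdual}). One cosmetic point: your phrase ``up to the sign $(-1)^{p+q}$ absorbed into the identification'' is a bit loose; it would be cleaner simply to say that the snake map is $(-1)^{p+q}\gamma^{*}$, and replacing a map in a long exact sequence by a nonzero scalar multiple does not affect exactness.
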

\begin{proof} To show $\op{Im}i^{*}\subset\op{Ker}j^{*}$, let $[\o_{0}]$ be a class in $H^{p,q}_{\bp,c}(X\ssm K)$. Take $V_{1}$ so that it avoids the support of $\o_{0}$. Then the class $i^{*}[\o_{0}]$ is mapped to zero
by $H^{p,q}_{\bp,c}(X)\ra H^{p,q}_{\bp}(X)\ra H^{p,q}_{\bp}(V_{1})$. To show $\op{Ker}j^{*}\subset\op{Im}i^{*}$,
let $[\o]$ be a class in $H^{p,q}_{\bp,c}(X)$ \st\ $j^{*}([\o])=0$. Then there exist a \nbd\ $V_{1}$ of $K$ and 
a $(p,q-1)$-form $\t$ on $V_{1}$ \st\ $\o=\bp\t$ on $V_{1}$. The form $\rho_{1}\t$ is a $(p,q-1)$-form on $X$
and $\o'=\o-\bp(\rho_{1}\t)$ is a $\bp$-closed $(p,q)$-form with compact support in $X\ssm K$. Thus $[\o]=i^{*}[\o']$.

To show $\op{Im}j^{*}\subset\op{Ker}\gamma^{*}$, let $[\o]$ be a class in $H^{p,q}_{\bp,c}(X)$. Then $\gamma^{*}j^{*}[\o]$ is by definition, the class of $\o\wedge\bp\rho_{1}$. The form $\rho_{0}\o$ is a $(p,q)$-form with
compact support in $X\ssm K$ and $\o\wedge\bp\rho_{1}=(-1)^{p+q+1}\bp(\rho_{0}\o)$. To show $\op{Ker}\gamma^{*}\subset\op{Im}j^{*}$, let $a$ be a class in $H^{p,q}_{\bp}[K]$ represented by $(V_{1},\eta)$. If $\gamma^{*}a=0$, there
exists a $(p,q)$-form $\xi$ on $X\ssm K$ with compact support \st\ $\eta\wedge\bp\rho_{1}=\bp\xi$ on $X\ssm K$.
We may think of $\xi$ as a $(p,q)$-form on $X$ and the equality holds on $X$, the both sides being $0$ near $K$. The form $\rho_{1}\eta$ is a $(p,q)$-form on $X$ and 
$\o=\xi+(-1)^{p+q+1}\rho_{1}\eta$ is
a $(p,q)$-form on $X$ extending $\eta$ (restricted to a \nbd\ of $K$) and we have 
$\bp\o=\bp\xi-\eta\wedge\bp\rho_{1}=0$.

To show $\op{Im}\gamma^{*}\subset\op{Ker}i^{*}$, let $a$ be a class in $H^{p,q}_{\bp}[K]$ represented by $(V_{1},\eta)$. By definition, $\gamma^{*}a$ is the class of $\eta\wedge\bp\rho_{1}$, which may be written as
$(-1)^{p+q}\bp(\rho_{1}\eta)$ and $\rho_{1}\eta$ is a $(p,q)$-form on $X$ with compact support.
To show $\op{Ker}i^{*}\subset\op{Im}\gamma^{*}$, let $[\a]$ be a class in $H^{p,q+1}_{\bp,c}(X\ssm K)$.
If $i^{*}[\a]=0$, there exists a $(p,q)$-form $\b$ with compact support in $X$ \st\ $\a=\bp\b$. Take $\rho_{0}$ so that $\rho_{0}\equiv 1$ on the support of $\a$. Then  $\bp(\rho_{0}\b)=\bp\rho_{0}\wedge\b+\rho_{0}\bp\b$,
$\rho_{0}\b$ is a $(p,q)$-form with compact support in $X\ssm K$ and $\rho_{0}\bp\b=\a$. Thus 
$\a=(-1)^{p+q}\b\wedge\bp\rho_{1}+\bp(\rho_{0}\b)$.
\end{proof}

\begin{remark} The above is an expression of the following exact sequence (cf. \cite{G}, \cite{K}) in our framework\,:
\[
\cdots\lra  H^{q}_{c}(X\ssm K;\scO^{(p)})\overset{i^{*}}\lra H^{q}_{c}(X;\scO^{(p)})\overset{j^{*}}\lra H^{q}(K;\scO^{(p)})\overset{\gamma^{*}}\lra H^{q+1}_{c}(X\ssm K;\scO^{(p)})
\overset{i^{*}}\lra\cdots.
\]
\end{remark}

\begin{theorem}\label{thlongdual} In the above situation, we have a commutative diagram with exact rows\,{\rm :}
\[
\SelectTips{cm}{}
\xymatrix@C=.26cm
@R=.7cm{\cdots\ar[r]& H^{p,q-1}_{\bp}(X \ssm K)\ar[r]^-{\delta} \ar[d]^{S_{X\ssm K}}&H^{p,q}_{\bar\vt}(X,X\ssm K)\ar[r]^-{j^*}\ar[d]^{\bar A} &H^{p,q}_{\bp}(X)\ar[d]^{S_{X}}\ar[r]^-{i^{*}}&H^{p,q}_{\bp}(X \ssm K)\ar[r]\ar[d]^{S_{X\ssm K}}&\cdots\\
\cdots\ar[r] & H^{n-p,n-q+1}_{\bp,c}(X\ssm K)^{*} \ar[r]^-{\gamma_*} & H^{n-p,n-q}_{\bp}[K]^{*} \ar[r]^-{j_*}&\ H^{n-p,n-q}_{\bp,c}(X)^{*}\ar[r]^-{i_{*}}&H^{n-p,n-q}_{\bp,c}(X\ssm K)^{*}\ar[r]&\cdots,}
\]
where the second row is the sequence dual to the one in Theorem \ref{thexactloc}.
\end{theorem}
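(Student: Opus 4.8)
The plan is to quote exactness of the two rows and then to check that each displayed square commutes; since both long exact sequences are $3$‑periodic, the three displayed squares are all that need to be treated. The top row is nothing but the exact sequence \eqref{lexactrelD2} for the pair $(X,X\ssm K)$, so nothing is to be proved there. The bottom row is obtained from the exact sequence of Theorem~\ref{thexactloc}, taken in bidegree $(n-p,n-q)$, by applying the algebraic‑dual functor $\sV\mapsto\sV^{*}$; because short exact sequences of vector spaces split, this functor is exact, so the bottom row is exact as well.

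Two of the squares will cost essentially nothing. The middle one, with $j^{*}$ on top, $j_{*}$ on the bottom and $\bar A$, $S_{X}$ on the sides, is exactly the commutative square already recorded just before Theorem~\ref{thexactloc}. The square with $i^{*}$ on top and $i_{*}$ on the bottom commutes already at the level of forms: for a $\bp$‑closed $(p,q)$‑form $\o$ on $X$ and a $\bp$‑closed $(n-p,n-q)$‑form $\b$ with compact support in $X\ssm K$, both $\langle i_{*}S_{X}[\o],[\b]\rangle$ and $\langle S_{X\ssm K}(i^{*}[\o]),[\b]\rangle$ equal $\int_{X}\o\wedge\b$, since $\o$ wedged with the extension by zero of $\b$ is supported in $X\ssm K$; no sign intervenes.

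The real content is the square with $\delta$ on top and $\gamma_{*}$ on the bottom. Given $[\t]\in H^{p,q-1}_{\bp}(X\ssm K)$ and $a\in H^{n-p,n-q}_{\bp}[K]$ represented by a $\bp$‑closed form $\eta$ on a relatively compact \nbd\ $V_{1}$ of $K$, I would represent $\delta[\t]$ by the cocycle $(0,-\t)$ (cf.\ \eqref{lexactrelD2}) and use Proposition~\ref{propuni} to evaluate $\bar A$ on it via the covering $\V=\{X\ssm K,V_{1}\}$. Taking a honeycomb system $\{R_{0},R_{1}\}$ adapted to $\V$ with $R_{1}$ a compact $2n$‑\mfd\ with boundary containing $K$ in its interior (so $R_{01}=-\partial R_{1}$), the cup product \eqref{cuprelcoch} followed by the integration \eqref{intDrel} gives $\langle\bar A(\delta[\t]),a\rangle=\int_{\partial R_{1}}\t\wedge\eta$. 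On the other hand, by the definitions of $\gamma^{*}$ and of the Serre morphism, $\langle\gamma_{*}(S_{X\ssm K}[\t]),a\rangle=\langle S_{X\ssm K}[\t],\gamma^{*}a\rangle=\int_{X\ssm K}\t\wedge\eta\wedge\bp\rho_{1}$, where $\{\rho_{0},\rho_{1}\}$ is a partition of unity subordinate to $\V$. To match the two, I would use that $\t\wedge\eta$ is a $\bp$‑closed $(n,n-1)$‑form on $V_{01}$, so that $\t\wedge\eta\wedge\bp\rho_{1}=-d(\rho_{1}\,\t\wedge\eta)$ there, a form with compact support in $V_{01}$; choosing $\rho_{1}$ and $R_{1}$ compatibly ($\rho_{1}\equiv 1$ on a \nbd\ of $R_{1}$) and a slightly larger compact $2n$‑\mfd\ with boundary $R_{1}'\subset V_{1}$ with $\op{supp}\rho_{1}\subset\op{Int}R_{1}'$, Stokes' theorem on $R_{1}'\ssm\op{Int}R_{1}$ collapses the right‑hand integral to $\int_{\partial R_{1}}\t\wedge\eta$ (the $\partial R_{1}'$ boundary term drops out because $\rho_{1}=0$ there, and $\rho_{1}=1$ on $\partial R_{1}$). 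This gives commutativity of the last square, and assembling the three squares yields the theorem.

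The hard part will be precisely this last square: $\delta$ and $\gamma^{*}$ are manufactured by genuinely different recipes — $\delta$ through the relative (co‑mapping‑cone) cochain complex, $\gamma^{*}$ through multiplication by $\bp\rho_{1}$ — so their agreement is not formal, and one must carry along all the sign and orientation conventions that enter ($R_{01}=-\partial R_{1}$, the signs in \eqref{cuprelcoch} and in the definitions of $\delta$ and $\gamma^{*}$) in order to get the square to commute on the nose rather than merely up to sign. Everything else reduces to naturality of the Serre pairing and Stokes' theorem.
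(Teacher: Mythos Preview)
Your argument is correct and matches the paper's approach almost exactly: the paper handles the $\delta$/$\gamma_{*}$ square by the same Stokes computation applied to $\rho_{1}\,\t\wedge\eta$ (integrating over $R_{0}$ rather than your annulus $R_{1}'\ssm\Int R_{1}$, which is equivalent since the integrand is supported in their overlap), and dismisses the $i^{*}$/$i_{*}$ square as immediate from the definitions. One caveat: the $j^{*}$/$j_{*}$ square you quote as ``already recorded'' was only \emph{stated} earlier in the paper, with a forward reference to this very theorem for its justification; the paper actually proves it here via another short Stokes argument (applied to $\rho_{0}\sigma_{01}\wedge\o$ on $R_{1}$), so you should supply that two-line step rather than cite it.
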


\begin{proof} For the first rectangle, it amounts to showing that
\[
\int_{X\ssm K}\t\wedge\eta\wedge\bp\rho_{1}=-\int_{R_{01}}\t\wedge\eta,
\]
where $\t$ is a $\bp$-closed $(p,q-1)$-form on $X\ssm K$, $\eta$ is a $\bp$-closed $(n-p,n-q)$-form on $V_{1}$,
$\{R_{0},R_{1}\}$ is a honeycomb system adapted to $\V$ and
$\{\rho_{0},\rho_{1}\}$ is a partition of unity subordinate to $\V$. We may assume that $\rho_{1}\equiv 1$ on $R_{1}$, thus in particular the support of $\bp\rho_{1}$ is in $R_{0}$. Since $\rho_{1}\eta$ is a $(n-p,n-q)$-form on $X$,
$\t\wedge\rho_{1}\eta$ is an $(n,n-1)$-form on $X\ssm K$. Since it is $\partial$-closed, we have
\[
d(\t\wedge\rho_{1}\eta)=\bp(\t\wedge\rho_{1}\eta)=-\t\wedge\eta\wedge\bp\rho_{1}
\]
and by the Stokes theorem
\[
\int_{R_{0}}\t\wedge\eta\wedge\bp\rho_{1}=-\int_{R_{01}}\t\wedge\rho_{1}\eta
=-\int_{R_{01}}\t\wedge\eta.
\]

For the second rectangle, it amounts to showing that
\begin{equationth}\label{eq2}
\int_{X}(\rho_{1}\sigma_{1}-\bp\rho_{0}\wedge\sigma_{01})\wedge\o
=\int_{R_{1}}\sigma_{1}\wedge\o+\int_{R_{01}}\sigma_{01}\wedge\o,
\end{equationth}
where $(\sigma_{1},\sigma_{01})$ is a cocycle representing a class in $H^{p.q}_{\bar\vt}(X,X\ssm K)$ and $\o$
a $\bp$-closed $(n-p,n-q)$-form on $X$ with compact support. We take $\{\rho_{0}, \rho_{1}\}$ so that
the support of $\rho_{1}$ is contained in $R_{1}$. Then the left hand side is the integral on $R_{1}$ and is written as
\[
\int_{R_{1}}\sigma_{1}\wedge\o-\int_{R_{1}}(\rho_{0}\sigma_{1}+\bp\rho_{0}\wedge\sigma_{01})\wedge\o.
\]
The form
$\rho_{0}\sigma_{01}$ is defined on $V_{1}$ and $\rho_{0}\sigma_{01}\wedge\o$ is an $(n,n-1)$-form defined
on $V_{1}$. We have
\[
d(\rho_{0}\sigma_{01}\wedge\o)=\bp(\rho_{0}\sigma_{01}\wedge\o)=(\rho_{0}\sigma_{1}+\bp\rho_{0}\wedge\sigma_{01})\wedge\o
\]
and we have (\ref{eq2}) by the Stokes theorem.

The commutativity of the third rectangle follows directly from the definition.
\end{proof}

An interesting problem would be to see when  $\bar A$ is an \iso. For this, we need to consider topological duals instead of 
algebraic duals and we briefly recall the theory of topological vector spaces and the Serre duality
(cf. \cite{K}, \cite{Se}, \cite{T}).  
In the sequel, for a locally convex topological vector space $\sV$, we denote 
by $\sV'$ its strong topological dual.

A Fr\'echet-Schwartz space, an FS space for short,  is a locally convex space $\sV$ that can be expressed as the inverse limit 
$\sV=\underset\lla\lim\sV_{i}$ of a descending
sequence of Banach spaces $(\sV_{i},u_{i,i+1})$ with each $u_{i,i+1}:\sV_{i+1}\ra \sV_{i}$ a compact linear map.  
A closed subspace $\sW$ of
an FS space $\sV$ is FS. 
The quotient $\sV/\sW$ is also FS.
A dual Fr\'echet-Schwartz space, a DFS space for short,  is a locally convex space $\sV$ that can be expressed as the direct limit 
$\sV=\underset\lra\lim\sV_{i}$ of an ascending
sequence of Banach spaces $(\sV_{i},u_{i+1,i})$ with each $u_{i+1,i}:\sV_{i}\ra \sV_{i+1}$ an injective compact linear map. A closed subspace $\sW$ of
a DFS space $\sV$ is DFS. The quotient $\sV/\sW$ is also DFS.

If $\sV=\underset\lla\lim \sV_{i}$ is   FS,
$\sV'$  is a DFS space, which may be written as $\sV'=\underset\lra\lim \sV_{i}'$.
Also, if  $\sV=\underset\lra\lim \sV_{i}$ is  DFS,
 $X'$ is an FS space, which may be written as $\sV'=\underset\lla\lim \sV_{i}'$.
In either case we have $(\sV')'=\sV$.
Let $T:\sV_{1}\ra \sV_{2}$ be a continuous linear map of FS spaces and 
${}^{t}T:\sV_{2}'\ra \sV_{1}'$ its transpose, which is continuous.
 In this situation, $\op{Im} T$ is closed \iff\ $\op{Im} {}^{t}T$ is (the closed range theorem). 
 Note that $\op{Ker}T$ and $\op{Ker}{}^{t}T$ are always closed.
 
 Let
\[
\sV_{1}\overset{T}\lra \sV_{2}\overset{S}\lra \sV_{3}
\]
be a sequence of continuous linear maps of FS spaces \st\ $S\circ T=0$. We set $H=\op{Ker} S/\op{Im} T$
and $H^{D}=\op{Ker} {}^{t}T/\op{Im} {}^{t}\hspace{-.5mm}S$.




\begin{lemma}[Serre]  In the above situation, suppose $\op{Im} T$ and $\op{Im} S$ are closed so that $H$ is FS and $H^{D}$ is DFS.  
In this case, 
$H^{D}$ is isomorphic with $H'$.
\label{lserre}
\end{lemma}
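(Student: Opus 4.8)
The plan is to exploit the fact that everything in sight is built from FS spaces and their strong duals, where duality is exact. First I would record the standard facts that make this work: for a continuous linear map $T:\sV_1\to\sV_2$ of FS spaces, the transpose ${}^{t}T:\sV_2'\to\sV_1'$ is continuous, $\op{Ker}{}^{t}T=(\op{Im}T)^{\perp}$ (the annihilator), and — since $\op{Im}T$ is assumed closed — the closed range theorem gives $\op{Im}{}^{t}T=(\op{Ker}T)^{\perp}$, where annihilators are taken in the strong dual. I would also use that a closed subspace $\sW\subset\sV$ of an FS space induces a topological isomorphism $(\sV/\sW)'\cong\sW^{\perp}$ and $\sW'\cong\sV'/\sW^{\perp}$, and that the analogous statements hold for DFS spaces; these are exactly the reflexivity/exactness properties quoted in the excerpt.

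The main step is then a diagram chase at the level of annihilators. Write $Z=\op{Ker}S$, $B=\op{Im}T$, so $H=Z/B$ with $B\subset Z$ closed subspaces of the FS space $\sV_2$, hence $H$ is FS. On the dual side, $\op{Ker}{}^{t}T=B^{\perp}$ and $\op{Im}{}^{t}\hspace{-.5mm}S=Z^{\perp}$ (using closedness of $\op{Im}S$ for the second, via the closed range theorem applied to $S$), so $H^{D}=B^{\perp}/Z^{\perp}$, which is DFS. Now I would identify $H'=(Z/B)'$. Applying the duality facts twice: $(Z/B)'\cong \{$functionals on $Z$ vanishing on $B\}=$ the image of $B^{\perp}\subset\sV_2'$ in $Z'=\sV_2'/Z^{\perp}$. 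That image is precisely $B^{\perp}/(B^{\perp}\cap Z^{\perp})=B^{\perp}/Z^{\perp}$ since $Z^{\perp}\subset B^{\perp}$ (because $B\subset Z$). Hence $H'\cong B^{\perp}/Z^{\perp}=H^{D}$, and one checks the identification is a topological isomorphism because each elementary step is.

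I would close by verifying that all the quotients and subspaces invoked are genuinely closed, so that the structural statements about FS/DFS spaces apply: $\op{Ker}S$ is closed since $S$ is continuous; $\op{Im}T$ is closed by hypothesis; $\op{Ker}{}^{t}T$ and $\op{Im}{}^{t}\hspace{-.5mm}S$ are closed (the former always, the latter by the closed range theorem, invoked for $S$ using that $\op{Im}S$ is closed). The hypothesis that both $\op{Im}T$ and $\op{Im}S$ are closed is what makes $H$ Hausdorff and $H^{D}$ well-behaved simultaneously, and it is exactly what licenses the two applications of the closed range theorem. The one point demanding care — the expected main obstacle — is checking that the algebraic isomorphism $H'\cong H^{D}$ is a homeomorphism for the strong topologies, rather than merely a bijection; this follows by tracking that each of the three elementary dualities used ($(\sV/\sW)'\cong\sW^{\perp}$, $\sW'\cong\sV'/\sW^{\perp}$, and the closed range identity) is itself a topological isomorphism in the FS/DFS category, but it is the step where one must resist hand-waving.
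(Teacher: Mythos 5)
The paper states this lemma without proof, quoting it from Serre and the references cited there; your argument is the standard one from those sources and is correct. The identifications $\op{Ker}{}^{t}T=(\op{Im}T)^{\perp}$, $\op{Im}{}^{t}S=(\op{Ker}S)^{\perp}$ (closed range theorem applied to $S$, using that $\op{Im}S$ is closed), together with the Hahn--Banach dualities $(\sV/\sW)'\cong\sW^{\perp}$ and $\sW'\cong\sV'/\sW^{\perp}$, give $H'\cong B^{\perp}/Z^{\perp}=H^{D}$ exactly as you describe, and you correctly isolate the one delicate point: that these identifications are homeomorphisms for the strong topologies, which holds because the duality functor is exact on the category of FS spaces (closed subspaces and Hausdorff quotients of FS spaces are again FS and dualize to quotients and subspaces of the DFS dual) --- the structural facts the paper itself records as background.
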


 


The space $\scE^{(p,q)}(X)$ has a natural \str\ of
FS space. In the sequence
\begin{equationth}\label{seqdb}
\scE^{(p,q-1)}(X)\overset{\bp^{p,q-1}}\lra \scE^{(p,q)}(X)\overset{\bp^{p,q}}\lra \scE^{(p,q+1)}(X),
\end{equationth}
$\op{Ker}\bp^{p,q}$ is always closed. Thus if $\op{Im}\bp^{p,q-1}$ is closed, $H_{\bp}^{p,q}(X)$ has a natural
\str\ of FS space.
The strong dual $\scE^{(p,q)}(X)'$ of $\scE^{(p,q)}(X)$  is isomorphic  with the space $\scD_{c}^{(n-p,n-q)}(X)$ of the DFS space of $(n-p,n-q)$-currents with compact support in $X$, the \iso\ is given by assigning to $T$ in $\scD_{c}^{(n-p,n-q)}(X)$ the
linear functional 
\[
\o\mapsto\int_{X}\o\wedge T.
\]
on $\scE^{(p,q)}(X)$ (cf. \cite{K}, \cite{Se}).  The transpose of (\ref{seqdb}) is isomorphic with
\[
\scD_{c}^{(n-p,n-q+1)}(X)\overset{(-1)^{p+q}\bp}\lla \scD_{c}^{(n-p,n-q)}(X)\overset{(-1)^{p+q+1}\bp}\lla \scD_{c}^{(n-p,n-q-1)}(X).
\]
Thus if $\op{Im}\bp^{p,q}$ in (\ref{seqdb}) is closed, $H^{n-q}(\scD_{c}^{(n-p,\bullet)}(X))$ has a natural \str\ of DSF space.
By Lemma \ref{lserre}, we have\,:

\begin{theorem}[Serre] If both $\op{Im}\bp^{p,q-1}$ and $\op{Im}\bp^{p,q}$ in {\rm \eqref{seqdb}} are closed, 
there is a natural \iso.\:
\[
H^{n-q}(\scD_{c}^{(n-p,\bullet)}(X))\simeq H^{p,q}_{\bp}(X)'.
\]\label{thserre}
\end{theorem}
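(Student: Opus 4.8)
The plan is to deduce Theorem \ref{thserre} by applying Serre's Lemma (Lemma \ref{lserre}) to the relevant segment of the Dolbeault complex, after verifying that the transposed segment of the complex of $C^\infty$ forms is the indicated segment of the complex of compactly supported currents. First I would fix the pairing
\[
\scE^{(p,q)}(X)\times \scD_{c}^{(n-p,n-q)}(X)\lra\C,\qquad (\o,T)\mapsto\int_{X}\o\wedge T,
\]
and recall (from \cite{K}, \cite{Se}) that this identifies $\scD_{c}^{(n-p,n-q)}(X)$ with the strong dual $\scE^{(p,q)}(X)'$; I would simply cite this as a known fact about FS and DFS spaces. Next I would compute the transpose of $\bp^{p,q}:\scE^{(p,q)}(X)\ra\scE^{(p,q+1)}(X)$ under this identification: for $T\in\scD_{c}^{(n-p,n-q-1)}(X)$ and $\o\in\scE^{(p,q)}(X)$ one has, by the definition of $\bp$ on currents and Stokes' theorem (no boundary term since $T$ has compact support),
\[
\int_{X}\bp\o\wedge T=(-1)^{p+q+1}\int_{X}\o\wedge\bp T,
\]
so that ${}^{t}(\bp^{p,q})=(-1)^{p+q+1}\bp$ acting $\scD_{c}^{(n-p,n-q-1)}(X)\ra\scD_{c}^{(n-p,n-q)}(X)$, exactly as displayed in the excerpt. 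Thus the transpose of the three-term sequence \eqref{seqdb} is the displayed three-term sequence of compactly supported currents, up to the harmless signs $(-1)^{p+q}$ and $(-1)^{p+q+1}$, which do not affect kernels, images, or cohomology.

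Now I would invoke the closed range theorem for FS spaces, quoted in the excerpt: since $\scE^{(p,q-1)}(X)\overset{\bp^{p,q-1}}\ra\scE^{(p,q)}(X)\overset{\bp^{p,q}}\ra\scE^{(p,q+1)}(X)$ is a sequence of continuous linear maps of FS spaces with $\bp^{p,q}\circ\bp^{p,q-1}=0$, the hypothesis that $\op{Im}\bp^{p,q-1}$ and $\op{Im}\bp^{p,q}$ are closed lets us set $H=\op{Ker}\bp^{p,q}/\op{Im}\bp^{p,q-1}=H^{p,q}_{\bp}(X)$, which is then FS, and $H^{D}=\op{Ker}\,{}^{t}(\bp^{p,q-1})/\op{Im}\,{}^{t}(\bp^{p,q})$, which is DFS. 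Under the identifications above, $H^{D}$ is precisely $H^{n-q}(\scD_{c}^{(n-p,\bullet)}(X))$ — note that the transpose of $\bp^{p,q-1}$ lands in the spot indexed by $(n-p,n-q)$ and the transpose of $\bp^{p,q}$ arrives there, so the quotient computes the $(n-q)$-th cohomology of the current complex at that spot. Lemma \ref{lserre} then yields the natural isomorphism $H^{D}\simeq H'$, i.e.
\[
H^{n-q}(\scD_{c}^{(n-p,\bullet)}(X))\simeq H^{p,q}_{\bp}(X)',
\]
which is the assertion. Naturality of the isomorphism follows from the naturality of the pairing and of the duality in Lemma \ref{lserre}.

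The only real subtlety — and the step I expect to require the most care — is bookkeeping the degree shift and the signs so that the transposed complex is genuinely recognized as the compactly supported current complex in the correct degree, and so that the cohomology $H^{D}$ produced by Serre's Lemma matches $H^{n-q}(\scD_{c}^{(n-p,\bullet)}(X))$ rather than a neighboring degree. Everything else is a direct citation: the FS/DFS structure of $\scE^{(p,q)}(X)$ and its dual, the closed range theorem, and Lemma \ref{lserre} itself. No new analytic input is needed beyond what the excerpt has already set up.
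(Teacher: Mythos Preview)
Your proposal is correct and follows exactly the paper's approach: the paper has already identified the strong dual $\scE^{(p,q)}(X)'$ with $\scD_{c}^{(n-p,n-q)}(X)$ via the pairing $(\o,T)\mapsto\int_X\o\wedge T$, displayed the transposed sequence with the signs $(-1)^{p+q}$ and $(-1)^{p+q+1}$, and then simply states that Theorem~\ref{thserre} follows ``By Lemma~\ref{lserre}.'' Your write-up just makes explicit the bookkeeping that the paper leaves implicit.
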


Note that there is a natural \iso\ $H^{q}(\scD_{c}^{(p,\bullet)}(X))\simeq H^{p,q}_{\bp,c}(X)$ and we may endow
the latter with the DFS \str\ so that the \iso\ is an \iso\ as topological vector space. With this,
under the assumption of the above theorem, we have the Serre duality 
\begin{equationth}\label{sduality}
H^{p,q}_{\bp}(X)\simeq H^{n-p,n-q}_{\bp,c}(X)'.
\end{equationth}

By a lemma of L. Schwartz, if $\dim H^{p,q}_{\bp}(X)$ is finite, $\op{Im}\bp^{p,q-1}$ is closed (cf. \cite{K}, \cite{Se}).
Thus, if $X$ is compact, (\ref{sduality}) reduces to (\ref{3.4}) for all $p$ and $q$. In the case $X$ is Stein, we have
$H^{p,q}_{\bp}(X)=0$ for $p\ge 0$ and $q\ge 1$. Since $\bp^{p,-1}=0$, (\ref{sduality}) holds for all $p$ and $q$.
In particular,  $H^{p,q}_{\bp,c}(X)=0$ for $p\ge 0$ and  $0\le q\le n-1$ and 
$H^{p,n}_{\bp,c}(X)\simeq H^{n-p,0}_{\bp}(X)'$.


\begin{theorem}\label{pbalex}
Suppose $X$ is Stein. Let $q$ be an integer with $q\ge 2$. Suppose that in the sequence
\[
\scE^{(p,q-2)}(X\ssm K)\overset{\bp^{p,q-2}}\lra \scE^{(p,q-1)}(X\ssm K)\overset{\bp^{p,q-1}}\lra \scE^{(p,q)}(X\ssm K),
\]
$\op{Im}\bp^{p,q-2}$ and $\op{Im}\bp^{p,q-1}$ are closed.
Then the groups $H^{p,q}_{\bar\vt}(X,X\ssm K)$ and $H^{n-p,n-q}_{\bp}[K]$ admit natural \str s of FS and DFS spaces, \r, and
\[
\bar A:H^{p,q}_{\bar\vt}(X,X\ssm K)\overset\sim\lra H^{n-p,n-q}_{\bp}[K]'.
\]
\end{theorem}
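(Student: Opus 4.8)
The plan is to combine the long exact sequences of Theorems \ref{thexactloc} and \ref{thlongdual} with the Serre duality machinery (Lemma \ref{lserre}, Theorems \ref{thserre} and \ref{sduality}) by a five-lemma argument in the category of FS/DFS spaces. First I would record the topological inputs: since $X$ is Stein, $H^{n-p,n-q}_{\bp}(X)=0$ for $q\le n$, $q\ge 1$, so the terms $H^{p,q}_{\bp}(X)$ and $H^{p,q-1}_{\bp}(X)$ flanking $H^{p,q}_{\bar\vt}(X,X\ssm K)$ in the exact sequence \eqref{lexactrelD2} vanish (for $q\ge 2$), whence the connecting map gives $H^{p,q}_{\bar\vt}(X,X\ssm K)\simeq H^{p,q-1}_{\bp}(X\ssm K)$. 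The closedness hypotheses on $\op{Im}\bp^{p,q-2}$ and $\op{Im}\bp^{p,q-1}$ over $X\ssm K$ are exactly what is needed, via the argument preceding Theorem \ref{thserre} applied to $X\ssm K$, to endow $H^{p,q-1}_{\bp}(X\ssm K)$ — hence $H^{p,q}_{\bar\vt}(X,X\ssm K)$ — with an FS structure, and to give $H^{n-p,n-q+1}_{\bp,c}(X\ssm K)$ a DFS structure with $H^{n-p,n-q+1}_{\bp,c}(X\ssm K)\simeq H^{p,q-1}_{\bp}(X\ssm K)'$ (Serre duality \eqref{sduality} on $X\ssm K$). Correspondingly $H^{n-p,n-q}_{\bp}[K]$ is the filtered direct limit of the FS spaces $H^{n-p,n-q}_{\bp}(V_1)$ (each finite-dimensional or at worst FS since the $V_1$ can be taken Stein or close to it), so it carries a natural DFS structure, and its strong dual $H^{n-p,n-q}_{\bp}[K]'$ is the inverse limit of the duals.

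The core step is to identify $\bar A$ under these identifications. I would show that, modulo the isomorphism $\delta\colon H^{p,q-1}_{\bp}(X\ssm K)\xrightarrow{\sim}H^{p,q}_{\bar\vt}(X,X\ssm K)$ and the duality $H^{n-p,n-q}_{\bp}[K]^{*}\supset H^{n-p,n-q}_{\bp}[K]'$, the $\bp$-Alexander morphism $\bar A$ agrees up to sign with the transpose ${}^t\gamma^{*}$ of the morphism $\gamma^{*}\colon H^{n-p,n-q}_{\bp}[K]\to H^{n-p,n-q+1}_{\bp,c}(X\ssm K)$ of \eqref{homgam}, once we pre-compose with the Serre isomorphism $H^{n-p,n-q+1}_{\bp,c}(X\ssm K)\simeq H^{p,q-1}_{\bp}(X\ssm K)'$. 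This is a computation with the explicit representatives: $\bar A_{X,K}$ pairs a cocycle $(0,-\t)$ (the image under $\delta$ of a $\bp$-closed $(p,q-1)$-form $\t$ on $X\ssm K$) with a class $[\eta]\in H^{n-p,n-q}_{\bp}(V_1)$ via $\int_{R_{01}}(-\t)\wedge\eta$, while ${}^t\gamma^{*}$ pairs $[\eta]$ with $[\t]$ via $\int_{X\ssm K}\t\wedge\eta\wedge\bp\rho_1$; the equality $\int_{X\ssm K}\t\wedge\eta\wedge\bp\rho_1=-\int_{R_{01}}\t\wedge\eta$ is precisely the Stokes-theorem identity already proved in the first rectangle of Theorem \ref{thlongdual}. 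Hence $\bar A$ is, up to the established isomorphisms and a sign, the transpose of $\gamma^{*}$.

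It then remains to see that ${}^t\gamma^{*}$ is an isomorphism. For this I would invoke Lemma \ref{lserre} (Serre's lemma): the relevant three-term sequence of FS spaces is the piece $H^{p,q-1}_{\bp}(X\ssm K)\to 0\to 0$ or, more precisely, the portion of the exact sequence of Theorem \ref{thexactloc} around $H^{n-p,n-q+1}_{\bp,c}(X\ssm K)$ after dualizing; because $X$ is Stein the neighbouring cohomology-with-compact-support groups vanish in the relevant range (as noted after \eqref{sduality}, $H^{n-p,n-q}_{\bp,c}(X)=0$ for $0\le n-q\le n-1$, i.e. for $1\le q\le n$), so the exact sequence of Theorem \ref{thexactloc} degenerates to an isomorphism $\gamma^{*}\colon H^{n-p,n-q}_{\bp}[K]\xrightarrow{\sim}H^{n-p,n-q+1}_{\bp,c}(X\ssm K)$, and the closed-range hypotheses guarantee we may dualize this topologically (the closed range theorem). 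Transposing the isomorphism $\gamma^{*}$ gives the isomorphism $\bar A$, with the FS/DFS structures as claimed. The main obstacle is the bookkeeping of the various sign conventions and the verification that all the maps in play are continuous with closed range, so that Lemma \ref{lserre} and the closed-range theorem genuinely apply to the limit spaces $H^{n-p,n-q}_{\bp}[K]$ and its dual — the functional-analytic passage to the (co)limit, rather than any new geometric input, is where the care is needed.
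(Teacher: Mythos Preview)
Your proposal is correct and follows essentially the same route as the paper: use Steinness of $X$ to make $\delta\colon H^{p,q-1}_{\bp}(X\ssm K)\to H^{p,q}_{\bar\vt}(X,X\ssm K)$ and $\gamma^{*}\colon H^{n-p,n-q}_{\bp}[K]\to H^{n-p,n-q+1}_{\bp,c}(X\ssm K)$ into isomorphisms, invoke Serre duality on $X\ssm K$ via the closedness hypotheses, and read off $\bar A$ from the first square of Theorem~\ref{thlongdual}. Two small comments: the ``five-lemma'' framing is superfluous, since the flanking terms vanish outright and the argument degenerates to a single commuting square; and the DFS structure on $H^{n-p,n-q}_{\bp}[K]$ is obtained (as the paper does) by transport along the isomorphism $\gamma^{*}$, not from the direct-limit description you sketch, which would require additional justification.
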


\begin{proof}
By assumption, we have the Serre duality for $X$ and, for $q\ge 2$, 
\[
\delta:H^{p,q-1}_{\bp}(X \ssm K)\overset\sim\lra H^{p,q}_{\bar\vt}(X,X\ssm K)\quad\text{and}\quad
\gamma^{*}:H^{n-p,n-q}_{\bp}[K]\overset\sim\lra H^{n-p,n-q+1}_{\bp,c}(X\ssm K).
\]
Also by assumption, $H^{p,q-1}_{\bp}(X \ssm K)$ is FS and 
$H^{n-p,n-q+1}_{\bp,c}(X\ssm K)$ is DFS. Thus if we endow $H^{p,q}_{\bar\vt}(X,X\ssm K)$ and $H^{n-p,n-q}_{\bp}[K]$
with FS and DFS \str s so that $\delta$ and $\gamma^{*}$ become \iso s, we have the duality.
\end{proof}



\lsection{Examples, applications and related topics}\label{secapp}

\subsection*{I. A canonical Dolbeault-\v Cech correspondence}

We consider the covering $\W'=\{W_i\}_{i=1}^n$ of $\C^n\ssm \{0\}$ given by
$W_i=\{z_i\ne 0\}$. 
Here we put ``$\,'\,$''  as we later consider the covering $\W=\{W_{i}\}_{i=1}^{n+1}$ of $\C^{n}$
with  $W_{n+1}=\C^{n}$ (cf. Remark~\ref{remdelta}.\,2 below). In the sequel we denote $\C^n\ssm \{0\}$ by $\C^n\ssm 0$.
We set
\[
\vPhi(z)=dz_{1}\wedge\cdots\wedge dz_{n}\quad\text{and}\quad\vPhi_{i}(z)=(-1)^{i-1}z_{i}\,dz_{1}\wedge\cdots\wedge \widehat{dz_{i}}\wedge\cdots\wedge dz_{n}.
\]
Then on the one hand we have the Bochner-Martinelli form
\[
\beta_n=C_{n}\frac{\sum_{i=1}^{n}\overline{\vPhi_{i}(z)}\wedge\vPhi(z)}{{\Vert z\Vert}^{2n}},
\qquad C_{n}=(-1)^{\frac {n(n-1)} 2}\frac{(n-1)!}{(2\pi\sqrt{-1})^{n}},
\]
which is a $\bp$-closed $(n,n-1)$-form on $\C^n\ssm 0$. On the other hand we have
the Cauchy form in $n$-variables
\[
\kappa_n=\Big(\frac{1}{2\pi\sqrt{-1}}\Big)^{n}\frac{\vPhi(z)}{z_1\cdots z_n},
\]
which may be thought of as a cocycle $c$ in $C^{n-1}(\W';\scO^{(n)})$ given by $c_{1\dots n}=\kappa_{n}$. 

\begin{theorem}\label{BMCauchy}  Under the \iso
\[
H^{n,n-1}_{\bp}(\C^n\ssm 0)\simeq H^{n-1}(\W';\scO^{(n)})
\]
of Corollary \ref{natisos}, the class of $\beta_n$ corresponds to the class of $(-1)^{\frac{n(n-1)} 2}\kappa_n$.
\end{theorem}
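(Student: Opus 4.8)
The plan is to use the explicit description, given in \eqref{dRCcorr}, of when a Dolbeault cocycle and a \v{C}ech cocycle represent corresponding classes under the \iso\ of Corollary \ref{natisos}. Concretely, I must produce \v{C}ech-Dolbeault cochains $\chi^{q_1}\in C^{q_1}(\W';\scE^{(n,n-2-q_1)})$, $0\le q_1\le n-2$, solving
\[
\beta_n=\bp\chi^0,\qquad \check\delta\chi^{q_1-1}+(-1)^{q_1}\bp\chi^{q_1}=0\ \ (1\le q_1\le n-2),\qquad -(-1)^{\frac{n(n-1)}2}\kappa_n=\check\delta\chi^{n-2}.
\]
This is a descent argument on the Stein covering $\W'$ of $\C^n\ssm 0$: one starts from $\beta_n$, writes it as $\bp$ of something on each $W_i$ (this is where the Dolbeault-Grothendieck lemma and the Steinness of the $W_i$ and their intersections enter), compares on overlaps to get a \v{C}ech $1$-cochain of lower $\bp$-degree, and keeps going until at the top one lands on a \v{C}ech $(n-1)$-cocycle with values in $\scO^{(n)}$, which must be identified with a multiple of $\kappa_n$.

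First I would recall (or reconstruct) the classical \emph{explicit} primitives that make this descent transparent. On $W_i=\{z_i\ne 0\}$ one has $\bp\log\lVert z\rVert^2 = \bar\partial$ of a function, and more to the point the Bochner--Martinelli kernel is built precisely so that iterated primitives are the partial kernels: the standard fact (see e.g. \cite{GH}, \cite{H2}) is that $\beta_n$ on $\bigcap_{j}W_{i_j}$ admits primitives expressible through the forms $\frac{\bar z_{i}\,dz\text{-monomials}}{(\text{products of }\bar z_i)}$ paired against $\vPhi(z)$, and the descent terminates with the \v{C}ech cocycle
\[
c_{1\dots n}=\Bigl(\tfrac{1}{2\pi\sqrt{-1}}\Bigr)^n\frac{\vPhi(z)}{z_1\cdots z_n}=\kappa_n,
\]
up to an explicit constant and sign. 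So the second step is to run this descent carefully, tracking: (a) the combinatorial signs $(-1)^{q_1}$ built into $\bar\vt$ and into $\check\delta$; (b) the normalization constant $C_n=(-1)^{\frac{n(n-1)}2}\frac{(n-1)!}{(2\pi\sqrt{-1})^n}$ of $\beta_n$ against the $(2\pi\sqrt{-1})^{-n}$ in $\kappa_n$, which accounts for the $(n-1)!$ and the power of $2\pi\sqrt{-1}$; and (c) the reordering signs coming from moving $\bar z_i\,d\bar z$-type factors past $dz$-factors and from the fact that the descent naturally produces the index set $\{1,\dots,n\}$ in a fixed order. The upshot should be that $\beta_n$ and $c$ differ, as \v{C}ech--Dolbeault cocycles, by $\bar\vt$ of an explicit chain, with the bookkeeping forcing exactly the factor $(-1)^{\frac{n(n-1)}2}$.

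The main obstacle is precisely the sign bookkeeping in step two, not the existence of the primitives. There are three independent sources of signs — the $(-1)^{q_1}$ twist in the definition of $\bar\vt$ in \eqref{cdrdiff}, the alternating sign in $\check\delta$, and the graded-commutativity signs from wedging $(0,\ast)$-forms past $(n,0)$-forms in $\vPhi(z)$ — and they must be combined over $n-1$ descent steps. A clean way to control this is to do the computation first for small $n$ ($n=1$ trivially, $n=2$ and $n=3$ by hand) to pin down the pattern, then prove the general case by an induction that peels off one variable at a time, relating $\beta_n$ and $\kappa_n$ on $\C^n\ssm 0$ to $\beta_{n-1}$ and $\kappa_{n-1}$; the factor $(-1)^{\frac{n(n-1)}2}=(-1)^{\frac{(n-1)(n-2)}2}\cdot(-1)^{n-1}$ then appears as the inductive step contributing an extra $(-1)^{n-1}$, which is exactly the sign picked up by inserting the $n$-th index into the \v{C}ech string and commuting one more $d\bar z_n$ past an $(n-1)$-fold $dz$-product. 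I would also remark, as the text already hints in Remark~\ref{remcano}, that the ``textbook'' proof via the Weil lemma would instead give the correspondence \emph{without} this sign, so the appearance of $(-1)^{\frac{n(n-1)}2}$ is genuinely an artifact of the \v{C}ech--Dolbeault normalization adopted here and must be exhibited rather than argued away.
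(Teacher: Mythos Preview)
Your framework is exactly the one the paper uses: produce $\chi=(\chi^{0},\dots,\chi^{n-2})$ with $\chi^{p}\in C^{p}(\W';\scE^{(n,n-2-p)})$ satisfying the three relations \eqref{dRCcorr}, i.e.\ exhibit $\beta_n-(-1)^{\frac{n(n-1)}2}\kappa_n=\bar\vt\chi$ in $\scE^{(n,n-1)}(\W')$. So the plan is correct.

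Where you diverge from the paper is in the execution. The paper does \emph{not} argue by induction on $n$ or by an abstract Stein-descent; it writes down a single closed-form candidate
\[
\chi^{p}_{I}=(-1)^{\varepsilon_I}\,\frac{q!\,C_n}{(n-1)!}\,\frac{\bar\varPhi_{I^{*}}(z)\wedge\varPhi(z)}{z_I\,\Vert z\Vert^{2(q+1)}},\qquad q=n-2-p,
\]
(with $I^{*}$ the complement of $I$ in $\{1,\dots,n\}$ and an explicit parity $\varepsilon_I$) and then verifies the three identities by direct computation of $\bp\chi^{p}_{I}$ and $(\check\delta\chi^{p-1})_{I}$. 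The factorial and the power of $\Vert z\Vert$ are exactly what make $\bp\chi^{0}=\beta_n$ and $\check\delta\chi^{n-2}=-(-1)^{\frac{n(n-1)}2}\kappa_n$ come out on the nose; the sign $(-1)^{\frac{n(n-1)}2}$ is already built into $C_n$ and surfaces in the last step.

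Your inductive strategy is plausible but has a real obstacle you do not address: the denominator $\Vert z\Vert^{2n}$ in $\beta_n$ couples all $n$ variables, so ``peeling off one variable'' does not produce $\beta_{n-1}$ on the nose, and the primitives on $W_i$ are not simply $\beta_{n-1}$ in the remaining variables. The actual primitives (the $\chi^{p}_{I}$ above) still carry $\Vert z\Vert$-powers and the full $\varPhi(z)$; only at the very last \v Cech step does everything holomorphic emerge. If you want to salvage an inductive argument, it would have to be an induction on $p$ within the fixed $n$-variable problem (i.e.\ along the descent), not on the ambient dimension. In practice, once you guess the form of $\chi^{p}_{I}$, the direct verification the paper performs is shorter and more transparent than setting up and tracking an induction.
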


\begin{proof} If $n=1$, the cohomologies are the same and $\beta_1=\kappa_1$. Thus we assume $n\ge 2$. 
We may think of $\b_{n}$ as being in $C^{0}(\W';\scE^{(n,n-1)})\subset \scE^{(n,n-1)}(\W')$ and $\kappa_{n}$ in $C^{n-1}(\W';\scE^{(n,0)})\subset \scE^{(n,n-1)}(\W')$.
We construct a cochain $\chi$ 
in $\scE^{(n,n-2)}(\W')=\bigoplus_{p=0}^{n-2} C^p(\W';\scE^{(n,q)})$, $q=n-p-2$,  so that 
\[
\beta_n-(-1)^{\frac {n(n-1)} 2}\kappa_n=\bar\vartheta\chi\qquad\text{in}\ \ \scE^{(n,n-1)}(\W').
\]
Writing $\chi=\sum_{p=0}^{n-2}\chi^{p}$, $\chi^{p}\in C^p(\W';\scE^{(n,q)})$, this is  expressed as (cf. (\ref{dRCcorr}))

\begin{equationth}\label{CDcorresp}
\begin{cases}
  \beta_n=\bp\chi^{0},\\
  0=\check\delta\chi^{p-1}+(-1)^{p}\bp\chi^{p},\qquad 1\le p\le n-2,\\
 -(-1)^{\frac {n(n-1)} 2}\kappa_n =\check\delta\chi^{n-2}.
\end{cases}
\end{equationth}
Note that the  condition in the middle is vacuous if $n=2$.

 Let $0\le p\le n-2$ so that $0\le q\le n-2$. For a $(p+1)$-tuple of integers $I=(i_0,\dots,i_p)$ with $1\le i_0<\cdots <i_p\le n$, let $I^{*}=(j_0,\dots,j_q)$ denote the complement
of $\{i_0,\dots,i_p\}$ in $\{1,\dots,n\}$ with $1\le j_0<\dots <j_q\le n$.
Setting
$z_I=z_{i_0}\cdots z_{i_p}$, $|I^{*}|=j_0+\cdots+j_q$ and
\[
\bar\varPhi_{I^{*}}(z)=\sum_{\mu=0}^q(-1)^\mu\bar z_{j_\mu}d\bar z_{j_0}\wedge\cdots\wedge\widehat{d\bar z_{j_\mu}}\wedge\cdots\wedge d\bar z_{j_q},
\]
we define a cochain $\chi^{p}$ by
\[
\chi^{p}_I
=(-1)^{\varepsilon_{I}}\frac {q!\,C_{n}} {(n-1)!}\,\frac{\bar\varPhi_{I^{*}}(z)
\wedge\varPhi(z)}{z_I\,\Vert z\Vert^{2(q+1)}},\qquad \varepsilon_{I}=|I^{*}|+\frac{q(n+p-1)}2.
\]
and prove that  it satisfies  (\ref{CDcorresp}). 

If we set $d\bar z_{I^{*}}=d\bar z_{j_{0}}\wedge\cdots\wedge d\bar z_{j_{q}}$, we have
\[
\bar\partial\, \bar\varPhi_{I^{*}}(z)=(q+1)d\bar z_{I^{*}}.
\]
We also have 
\[
\bar\partial\,\frac 1 {\Vert z \Vert^{2(q+1)}}=-\frac {q+1} {\Vert z \Vert^{2(q+2)}}\sum_{i=1}^{n}z_{i}d\bar z_{i}.
\]
Using these, we compute
\[
\bar\partial\chi^{p}_{I}=(-1)^{\varepsilon_{I}}C_{n}\frac{(q+1)!}{(n-1)!}\,\frac{(\Vert z\Vert^{2}d\bar z_{I^{*}}-\sum_{i=1}^{n}z_{i}d\bar z_{i}\wedge\bar\varPhi_{I^{*}}(z))
\wedge\varPhi(z)}{z_I\,\Vert z\Vert^{2(q+2)}}.
\]
To see the numerator, note that
\[
\sum_{i=1}^{n}z_{i}d\bar z_{i}\wedge\bar\varPhi_{I^{*}}(z)
=\sum_{\nu=0}^{p}z_{i_{\nu}}d\bar z_{i_{\nu}}\wedge\bar\varPhi_{I^{*}}(z)+\sum_{\mu=0}^{q}|z_{j_{\mu}}|^2d\bar z_{I^{*}}
\]
so that
\[
\Vert z\Vert^{2}d\bar z_{I^{*}}-\sum_{i=1}^{n}z_{i}d\bar z_{i}\wedge\bar\varPhi_{I^{*}}(z)
=\sum_{\nu=0}^{p}z_{i_{\nu}}(\bar z_{i_{\nu}}d\bar z_{I^{*}}-d\bar z_{i_{\nu}}\wedge\bar\varPhi_{I^{*}}(z)).
\]
Thus, setting $I_{\nu}=(i_{0},\dots,\widehat{i_{\nu}},\dots,i_{p})$, we have
\begin{equationth}\label{dbartheta2}
\bar\partial\chi^{p}_{I}=(-1)^{\varepsilon_{I}}C_{n}\frac{(q+1)!}{(n-1)!}\,\sum_{\nu=0}^{p}\frac{(\bar z_{i_{\nu}}d\bar z_{I^{*}}-d\bar z_{i_{\nu}}\wedge\bar\varPhi_{I^{*}}(z))
\wedge\varPhi(z)}{z_{I_{\nu}}\,\Vert z\Vert^{2(q+2)}}.
\end{equationth}

Now we verify the three identities in (\ref{CDcorresp}) successively.
\vv

\noindent
(I) First identity. If $p=0$, then $q=n-2$.  In this case $I$ is of the form $(r)$,  $r=1,\dots,n$, and  $(r)^{*}=(1,\dots,\widehat{r},\dots,n)$. 
Denoting $(r)$ by $r$, from (\ref{dbartheta2}), we have
\[
\bar\partial\chi^{0}_{r}=(-1)^{\varepsilon_{r}}C_{n}\,\frac{(\bar z_{r}d\bar z_{(r)^{*}}-d\bar z_{r}\wedge\bar\varPhi_{(r)^{*}}(z))
\wedge\varPhi(z)}{\Vert z\Vert^{2n}}.
\]
On the other hand, we compute
\[
\sum_{i=1}^{n}\overline{\varPhi_{i}(z)}=\overline{\varPhi_{r}(z)}+\sum_{i\ne r}\overline{\varPhi_{i}(z)}
=(-1)^{r-1}(\bar z_{r}d\bar z_{(r)^{*}}-d\bar z_{r}\wedge\bar\varPhi_{(r)^{*}}(z)).
\]
Noting that $\varepsilon_{r}-r+1=n(n-1)-2(r-1)$, which is always even,
we have the first identity.
\vv

\noindent
(II) The second identity. This applies for $n\ge 3$. Suppose $1\le p\le n-2$ so that $0\le q\le n-3$. By definition
\[
(\check\delta\chi^{p-1})_{I}=\sum_{\nu=0}^{p}(-1)^{\nu}\chi^{p-1}_{I_{\nu}}.
\]
We have
\[
\chi^{p-1}_{I_{\nu}}
=(-1)^{\varepsilon_{I_{\nu}}}\frac {(q+1)!\,C_{n}} {(n-1)!}\,\frac{\bar\varPhi_{I_{\nu}^{*}}(z)
\wedge\varPhi(z)}{z_{I_{\nu}}\,\Vert z\Vert^{2(q+2)}}.
\]
To compute $\bar\varPhi_{I_{\nu}^{*}}(z)$, let $r_{\nu}$ denote the integer with $-1\le r_{\nu}\le q$ \st\
$j_{r_{\nu}}<i_{\nu}<j_{r_{\nu}+1}$, where we set $j_{-1}=0$ and $j_{q+1}=n+1$. Then we have
\[
\bar\varPhi_{I_{\nu}^{*}}(z)=(-1)^{r_{\nu}+1}(\bar z_{i_{\nu}}d\bar z_{I^{*}}-d\bar z_{i_{\nu}}\wedge\bar\varPhi_{I^{*}}(z)).
\]
Thus, comparing with (\ref{dbartheta2}), it suffices to show that the parity of $\nu+\varepsilon_{I_{\nu}}+r_{\nu}+1$ is different from that of
$p+\varepsilon_{I}$.
We have
\[
\varepsilon_{I_{\nu}}=|I_{\nu}^{*}|+\frac {(q+1)(n+p-2)} 2=|I^{*}|+i_{\nu}+\frac {(q+1)(n+p-2)} 2.
\]
Therefore
\[
\nu+\varepsilon_{I_{\nu}}+r_{\nu}+1-(p+\varepsilon_{I})=\nu+i_{\nu}+r_{\nu}+1.
\]
We show, by induction on $\nu$, that $\nu+i_{\nu}+r_{\nu}$ is even. Suppose $\nu=0$. If $i_{0}<j_{0}$, then $i_{0}=1$
and $r_{0}=-1$ so that it is even. If $i_{0}>j_{0}$, then $i_{0}=r_{0}+2$ and it is even. Suppose it is even
for $\nu$. If $i_{\nu+1}<j_{r_{\nu}+1}$, then $i_{\nu+1}=i_{\nu}+1$ and $r_{\nu+1}=r_{\nu}$ so that
it is even for $\nu+1$. If $i_{\nu+1}>j_{r_{\nu}+1}$, then $i_{\nu+1}=i_{\nu}+r_{\nu+1}-r_{\nu}+1$
and it is even.

\vv

\noindent
(III) The third identity.
If $p=n-2$, then $q=0$ and, for $r=1,\dots,n$, we set $I^{(r)}=(1,\dots,\widehat{r},\dots,n)$. Then
${I^{(r)}}^{*}=(r)$ and we have
\[
\chi^{n-2}_{I^{(r)}}=(-1)^{r+\frac{n(n-1)}2}\Big(\frac 1 {2\pi\sqrt{-1}}\Big)^{n}\,\frac{\bar z_{r}
\varPhi(z)}{z_{I^{(r)}}\,\Vert z\Vert^{2}}=(-1)^{r+\frac{n(n-1)}2}\Big(\frac 1 {2\pi\sqrt{-1}}\Big)^{n}\,\frac{ |z_{r}|^{2}
\varPhi(z)}{\Vert z\Vert^{2}\, z_{1}\cdots z_{n}}.
\]
By definition
\[
\check\delta\chi^{n-2}_{1\dots n}=\sum_{r=1}^{n}(-1)^{r-1}\chi^{n-2}_{I^{(r)}}=-(-1)^{\frac {n(n-1)} 2}\kappa_{n}.
\]
\end{proof}

\begin{remark}\label{rembm0} {\bf 1.} If we set
\[
\beta_n^{0}=C_{n}\frac{\sum_{i=1}^{n}\overline{\vPhi_{i}(z)}}{{\Vert z\Vert}^{2}},\qquad
\kappa_n^{0}=\Big(\frac 1 {2\pi\sqrt{-1}}\Big)^{n}\,\frac{1}{z_1\cdots z_n},
\]
under the \iso
\[
H^{0,n-1}_{\bp}(\C^n\ssm 0)\simeq H^{n-1}(\W';\scO),
\]
the class of $\beta_n^{0}$ corresponds to the class of $(-1)^{\frac{n(n-1)} 2}\kappa_n^{0}$.
\smallskip

\noindent
{\bf 2.} Let $V$ be a Stein \nbd\ of $0$ in $\C^{n}$ and $\V'=\{V_i\}_{i=1}^n$ the covering of $V\ssm 0$ given by
$V_i=V\cap W_{i}$. Then we have a canonical \iso\ 
$H^{n,n-1}_{\bp}(V\ssm 0)\simeq H^{n-1}(\V';\scO^{(n)})$, under which the  the class of 
$\b_{n}$ (restricted to $V\ssm 0$) corresponds to the  class of  $(-1)^{\frac{n(n-1)} 2}\kappa_n$ (restricted to $\V'$).
Suppose the class of $\t$ corresponds to the class of $\gamma$ under the above \iso.
If $h$ is a \h\ \fcn\ on $V$, since $\bp h=0$, we see that the class of $h\t$ corresponds to the class of $h\gamma$ (cf. \eqref{dRCcorr}, \eqref{CDcorresp}). 
\end{remark}


In the sequel we endow $\C^{n}=\{(z_{1},\dots,z_{n})\}$, $z_{i}=x_{i}+\sqrt{-1}y_{i}$, with the usual 
orientation, i.e., the one where
 $(x_{1},y_{1},\dots,x_{n},y_{n})$ is a positive coordinate system. In the above situation set
\[
R_{1}=\{\,z\in\C^{n}\mid \Vert z\Vert^{2}\le n\ve^{2}\,\}.
\]
The  boundary $\partial R_{1}$ is a usually oriented $(2n-1)$-sphere $S^{2n-1}$.
We also set
\[
\vG=\{\,z\in\C^{n}\mid |z_{i}|=\ve,\ i=1,\dots,n\,\},
\]
which is an $n$-cycle oriented so that $\op{arg}z_{1}\wedge\cdots \wedge\op{arg}z_{n}$ is positive.

\begin{theorem}\label{thcdcorrint} Let $\t$ be a $\bp$-closed $(n,n-1)$-form on $\C^n\ssm 0$ and $\gamma$ a cocycle in $C^{n-1}(\W';\scO^{(n)})$. If the class of $\t$ corresponds to the class of $\gamma$ by the canonical \iso
\[
H^{n,n-1}_{\bp}(\C^n\ssm 0)\simeq H^{n-1}(\W';\scO^{(n)}),
\]
then
\[
\int_{S^{2n-1}}\t=(-1)^{\frac {n(n-1)} 2}\int_{\vG}\gamma.
\]
\end{theorem}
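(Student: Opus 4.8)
The plan is to pull both sides back to the sphere $S=S^{2n-1}$ of radius $\sqrt{n}\,\ve$, which contains $\vG$, endow $S$ with its usual orientation, and combine the honeycomb integration machinery of Section~\ref{seccupint} with a Stokes-type vanishing. First, by Corollary~\ref{natisos}, the hypothesis that $[\t]$ corresponds to $[\gamma]$ means precisely that there is a cochain $\chi\in\scE^{(n,n-2)}(\W')$ with $\t-\gamma=\bar\vt\chi$ in $\scE^{(n,n-1)}(\W')$, where $\gamma\in C^{n-1}(\W';\scO^{(n)})$ is regarded as a \v{C}ech--Dolbeault cochain. Restricting all forms to $S$, applying the natural morphism of \eqref{DdR}, and using that $\bp=d$ on $(n,\bullet)$-forms, this becomes an equality $\t-\gamma=D\chi$ in the total \v{C}ech--de~Rham complex of the covering $\{W_{i}\cap S\}_{i=1}^{n}$ of $S$, in total degree $2n-1=\dim_{\R}S$; in particular the integration $\int_{S}$ of Section~\ref{seccupint} is defined on its cohomology.

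Next I would take the honeycomb system $\{R_{i}\}_{i=1}^{n}$ adapted to $\{W_{i}\cap S\}$ defined by $R_{i}=\{\,z\in S\mid |z_{i}|\ge|z_{j}|\ \text{for all}\ j\,\}$, with $R_{i}$ oriented as a part of $S$. The defining properties are checked directly: $R_{i}\subset W_{i}\cap S$ (a point of $S$ is non-zero, hence its largest coordinate is non-zero); $S=\bigcup_{i}R_{i}$ with mutually disjoint interiors; the faces $R_{i_{0}\dots i_{q}}=\{\,z\in S\mid |z_{i_{0}}|=\dots=|z_{i_{q}}|\ge|z_{j}|\ \forall j\,\}$ are $(2n-1-q)$-dimensional with piecewise smooth boundary; and the only maximal index set is $\{1,\dots,n\}$, for which $R_{1\dots n}=\{\,z\in S\mid |z_{i}|=\ve\ \text{for all}\ i\,\}$ coincides with $\vG$ as a set and has empty boundary. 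Since $S$ is compact without boundary we get $\int_{S}D\chi=0$, whence $\int_{S}\t=\int_{S}\gamma$ (integration of \v{C}ech--de~Rham cochains over $S$). Now the cochain $\t$ is concentrated in \v{C}ech degree $0$, so $\int_{S}\t=\sum_{i}\int_{R_{i}}\t=\int_{S^{2n-1}}\t$; and the cochain $\gamma$ is concentrated in \v{C}ech degree $n-1$, for which $(1,\dots,n)$ is the unique strictly increasing $n$-tuple in $\{1,\dots,n\}$, so $\int_{S}\gamma=\int_{R_{1\dots n}}\gamma_{1\dots n}$. Thus $\int_{S^{2n-1}}\t=\int_{R_{1\dots n}}\gamma_{1\dots n}$.

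It remains to compare $\int_{R_{1\dots n}}\gamma_{1\dots n}$ with $(-1)^{\frac{n(n-1)}{2}}\int_{\vG}\gamma$. Since $R_{1\dots n}$ and $\vG$ agree as sets, these integrals differ by a sign $\epsilon_{n}\in\{\pm1\}$ comparing the honeycomb orientation of $R_{1\dots n}$ --- obtained from the usual orientation of $S$ by $n-1$ successive boundary passages through the faces $R_{1\dots q}$ --- with the orientation of $\vG$ for which $\op{arg}z_{1}\wedge\cdots\wedge\op{arg}z_{n}$ is positive; this $\epsilon_{n}$ depends only on $n$. The cleanest way to pin it down is to feed in the special case of Theorem~\ref{BMCauchy}: for $\t=\beta_{n}$ the corresponding cocycle has $\gamma_{1\dots n}=(-1)^{\frac{n(n-1)}{2}}\kappa_{n}$, so the identity just proved reads $\int_{S^{2n-1}}\beta_{n}=\epsilon_{n}(-1)^{\frac{n(n-1)}{2}}\int_{\vG}\kappa_{n}$; together with the normalizations $\int_{S^{2n-1}}\beta_{n}=1$ and $\int_{\vG}\kappa_{n}=(2\pi\sqrt{-1})^{-n}\prod_{j=1}^{n}\oint_{|z_{j}|=\ve}dz_{j}/z_{j}=1$ this forces $\epsilon_{n}=(-1)^{\frac{n(n-1)}{2}}$, and the theorem follows. (Alternatively $\epsilon_{n}$ may be computed directly by tracking the induced boundary orientations, independently of Theorem~\ref{BMCauchy}.)

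The step I expect to be the main obstacle is this last one, the orientation bookkeeping: either carrying out the iterated boundary-orientation computation on the faces $R_{1\dots q}$, or being careful enough about the orientation the honeycomb $\{R_{i}\}$ induces on $S$ that the Bochner--Martinelli normalization can be legitimately invoked. Everything preceding it is a routine application of the honeycomb integration and Stokes formalism established earlier in the paper.
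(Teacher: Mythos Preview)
Your proof is correct and follows essentially the same route as the paper: the same honeycomb $\{R_i\}=\{Q_i\}$ on $S^{2n-1}$, the same passage $\t-\gamma=\bar\vt\chi\Rightarrow\t-\gamma=D\chi$ on the sphere, and the same Stokes vanishing $\int_{S}D\chi=0$ giving $\int_{S^{2n-1}}\t=\int_{R_{1\dots n}}\gamma_{1\dots n}$. The only difference is the last step: the paper asserts directly that $R_{1\dots n}=(-1)^{n(n-1)/2}\vG$ as oriented manifolds, whereas you extract this sign by plugging in Theorem~\ref{BMCauchy} together with the classical normalizations $\int_{S^{2n-1}}\beta_n=\int_{\vG}\kappa_n=1$; since Theorem~\ref{BMCauchy} is proved independently and those normalizations are standard, your variant is legitimate and avoids the iterated boundary-orientation computation.
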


\begin{proof} Recall that we have canonical \iso s
\[
H^{n,n-1}_{\bp}(\C^n\ssm 0)\overset\sim\lra H^{n,n-1}_{\bar\vt}(\W')\overset\sim\longleftarrow  H^{n-1}(\W';\scO^{(n)}).
\]
The assumption implies that there exists a cochain $\chi$ in $\scE^{(n,n-2)}(\W')$ \st\ 
$\t-\gamma=\bar\vt\chi$.
Consider the commutative diagram
\[
\SelectTips{cm}{}
\xymatrix
@C=.7cm
@R=.6cm
{\scE^{(n,n-2)}(\W')\ar[r]^-{} \ar[d]^{\bar\vt}&\scE^{(2n-2)}(\W')\ar[r]^-{}\ar[d]^-{D} &\scE^{(2n-2)}(\W'\cap S^{2n-1})\ar[d]^-{D}\\
\scE^{(n,n-1)}(\W')\ar[r]^-{} & \scE^{(2n-1)}(\W')\ar[r]^-{}&\scE^{(2n-1)}(\W'\cap S^{2n-1})\ar[r]^-{\int_{S^{2n-1}}}&\C,}
\]
where $\W'\cap S^{2n-1}$ denotes the covering of $ S^{2n-1}$ consisting of the $W_{i}\cap S^{2n-1}$'s.
For each $i=1,\dots,n$, we set
\[
Q_{i}=\{\,z\in  S^{2n-1}\mid |z_{i}|\ge |z_{j}|\ \ \text{for all}\ j\ne i\,\}.
\]
Then $\{Q_{i}\}$ is a honeycomb system adapted to $\W'\cap S^{2n-1}$ and, by the Stokes formula
for \v{C}ech-de~Rham cochains,
\[
0=\int_{S^{2n-1}}D\chi=\sum_{p=0}^{n-1}\sum_{i_{0}<\cdots<i_{p}}\int_{Q_{i_{0}\dots i_{p}}}(D\chi)_{i_{0}\dots i_{p}}=\sum_{i=1}^{n}\int_{Q_{i}}\t-\int_{Q_{1\dots n}}\gamma.
\]
Noting that $Q_{1\dots n}=(-1)^{\frac{n(n-1)}2}\vG$, we have the theorem.
\end{proof}

Note that the above  is consistent with Theorem \ref{BMCauchy}\,:
\[
\int_{S^{2n-1}}\b_{n}=1=\int_{\vG}\kappa_{n}.
\]

\subsection*{II. Local duality}

\paragraph{A theorem of Martineau\,:} The following theorem of A. Martineau \cite{M} (see also \cite{H1}, \cite{K}) may
naturally be  interpreted in our framework as one of the  cases where the $\bp$-Alexander morphism is an \iso\ 
with topological duals so that the duality
pairing is given by the cup product followed by integration as described in Section \ref{secld}.

In the below we assume that $\C^{n}$ is oriented, but the orientation may not be the usual one.

\begin{theorem}\label{thMH} Let $K$ be a compact set in $\C^{n}$ \st\ $H^{p,q}_{\bp}[K]=0$ for $q\ge 1$.
Then for any open set $V\supset K$, $H^{p,q}_{\bar\vt}(V,V\ssm K)$ and $H^{n-p,n-q}_{\bp}[K]$ admits 
natural \str s of FS and DFS spaces, \r, and we have\,{\rm :}
\[
\bar A:H^{p,q}_{\bar\vt}(V,V\ssm K)\overset\sim\lra H^{n-p,n-q}_{\bp}[K]'=\begin{cases} 0\qquad & q\ne n\\
                                                                       \scO^{(n-p)}[K]'\qquad & q=n.
                                                                       \end{cases}
\]
\end{theorem}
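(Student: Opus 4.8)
The plan is to reduce to a Stein $V$ by excision, read off $H^{n-p,n-q}_{\bp}[K]$ directly from the hypothesis, kill all degrees $q\ne n$ by local-cohomology vanishing, and treat $q=n$ through the Serre-duality machinery exactly as in Theorem~\ref{pbalex}, using the diagram of Theorem~\ref{thlongdual} to identify the pairing. By Proposition~\ref{excision} the group $H^{p,q}_{\bar\vt}(V,V\ssm K)$ does not depend on the open $V\supset K$, so we may take $V$ Stein: since $\scO^{(p)}$ is free on open subsets of $\C^{n}$, the hypothesis is independent of $p$ and amounts to $\varinjlim_{U\supset K}H^{q}(U;\scO)=0$ for $q\ge1$, i.e. to $K$ admitting a fundamental system of Stein neighbourhoods (cf. \cite{M},\cite{K}). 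Assume $n\ge2$; $n=1$ is classical and direct. The target is immediate: $H^{n-p,n-q}_{\bp}[K]=\varinjlim_{V_{1}\supset K}H^{n-p,n-q}_{\bp}(V_{1})$ vanishes for $n-q\ge1$ and equals $\varinjlim_{V_{1}}\scO^{(n-p)}(V_{1})=\scO^{(n-p)}[K]$ for $q=n$, which gives the stated two-line formula for $H^{n-p,n-q}_{\bp}[K]'$.

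For $q\ne n$: by the relative Dolbeault theorem (Theorem~\ref{thdrel2}), $H^{p,q}_{\bar\vt}(V,V\ssm K)\simeq H^{q}(V,V\ssm K;\scO^{(p)})=H^{q}_{K}(V;\scO^{(p)})$, and this vanishes for $q\ne n$ by Martineau's local vanishing theorem for compacta with a fundamental system of Stein neighbourhoods --- concretely one feeds $H^{\bullet}(V;\scO^{(p)})=0$, the Hartogs-type vanishing $H^{r}(V\ssm K;\scO^{(p)})=0$ for $1\le r\le n-2$, and Hartogs extension into the long exact sequence of local cohomology, equivalently into \eqref{lexactrelD2}.

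For $q=n$: \eqref{lexactrelD2} with $V$ Stein gives $\delta\colon H^{p,n-1}_{\bp}(V\ssm K)\overset{\sim}{\lra}H^{p,n}_{\bar\vt}(V,V\ssm K)$, and the exact sequence of Theorem~\ref{thexactloc}, together with $H^{n-p,0}_{\bp,c}(V\ssm K)=H^{n-p,0}_{\bp,c}(V)=0$ (all connected components are noncompact, so there are no compactly supported holomorphic forms) and $H^{n-p,1}_{\bp,c}(V)\simeq H^{p,n-1}_{\bp}(V)'=0$ (Serre duality \eqref{sduality} for the Stein $V$), gives $\gamma^{*}\colon H^{n-p,0}_{\bp}[K]\overset{\sim}{\lra}H^{n-p,1}_{\bp,c}(V\ssm K)$. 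Now apply Theorem~\ref{thserre} on $V\ssm K$ in bidegree $(p,n-1)$: $\op{Im}\bp^{p,n-1}$ is closed because $\bp^{p,n-1}$ is surjective ($H^{p,n}_{\bp}(V\ssm K)=0$, as the top-degree Dolbeault cohomology of a noncompact complex manifold vanishes), so --- provided $\op{Im}\bp^{p,n-2}$ on $V\ssm K$ is also closed --- $H^{p,n-1}_{\bp}(V\ssm K)$ carries a natural FS structure, $H^{n-p,1}_{\bp,c}(V\ssm K)$ is its DFS strong dual, and the Serre morphism $S_{V\ssm K}$ is an isomorphism onto it. Then the left-hand square of the diagram in Theorem~\ref{thlongdual} for $q=n$ reads $\bar A\circ\delta=\gamma_{*}\circ S_{V\ssm K}$, whence $\bar A=\gamma_{*}\circ S_{V\ssm K}\circ\delta^{-1}$ is an isomorphism $H^{p,n}_{\bar\vt}(V,V\ssm K)\overset{\sim}{\lra}H^{n-p,0}_{\bp}[K]'=\scO^{(n-p)}[K]'$ of an FS space onto its DFS dual; this also supplies the asserted FS and DFS structures on $H^{p,q}_{\bar\vt}(V,V\ssm K)$ and $H^{n-p,n-q}_{\bp}[K]$.

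The main obstacle is the single analytic input that is not formal: the closed-range property of $\bp^{p,n-2}$ on $V\ssm K$, equivalently the Hausdorffness of $H^{p,n-1}_{\bp}(V\ssm K)$, which is the function-theoretic core of Martineau's theorem and is established by H\"ormander's $L^{2}$ estimates for $\bp$, using the Stein structure of $V$ and the pseudoconvexity of the neighbourhoods of $K$. Everything else reduces to bookkeeping with \eqref{lexactrelD2}, Theorems~\ref{thexactloc} and \ref{thserre}, and the diagram of Theorem~\ref{thlongdual}; in particular, the fact that the cup-product-then-integration pairing defining $\bar A$ is transported by $\delta$ and $\gamma^{*}$ to the Serre pairing is precisely the commutativity of that first square, so it need not be reproved.
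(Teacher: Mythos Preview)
Your proof is correct and follows essentially the same route as the paper: reduce to Stein $V$ by excision, then invoke the machinery behind Theorem~\ref{pbalex} (the isomorphisms $\delta$ and $\gamma^{*}$, Serre duality on $V\ssm K$, and the commutative square of Theorem~\ref{thlongdual}), with the one genuinely analytic step being the closed-range property of $\bp$ on $V\ssm K$. The paper's own argument is terser---it simply says ``apply Theorem~\ref{pbalex}; the hypothesis is verified by a theorem of Malgrange (cf.~\cite{K})''---whereas you unpack the case $q\ne n$ separately via Hartogs-type vanishing and treat $q=n$ by reproducing the proof of Theorem~\ref{pbalex} rather than citing it.

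One small point of attribution: the closed-range input you ascribe to H\"ormander's $L^{2}$ estimates is what the paper (and the classical literature around Martineau's theorem) credits to Malgrange. In fact you already use Malgrange's theorem when you assert $H^{p,n}_{\bp}(V\ssm K)=0$ for noncompact $V\ssm K$; the closedness of $\op{Im}\bp^{p,n-2}$ is then deduced in \cite{K} from this together with the Serre machinery, not directly from $L^{2}$ methods. This is a matter of citation rather than a gap in the argument.
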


The  theorem is originally stated for $p=0$ in terms of local cohomology.
 This is proved by applying Theorem \ref{pbalex}. First, by excision 
we may assume that $V$ is Stein. Thus the essential point is to prove that the hypothesis of Theorem \ref{pbalex} holds, which is done using
a theorem of Malgrange (cf. \cite{K}). Incidentally, the hypothesis $H^{p,q}_{\bp}[K]=0$, for $q\ge 1$, is satisfied
if $K$ is a subset of $\R^{n}$ by the following theorem (cf. \cite{Gr})\,:

\begin{theorem}[Grauert] \label{thGrauert} Any subset  of $\R^{n}$ admits a fundamental system of  \nbd s consisting of Stein open sets in $\C^{n}$.\label{thGrauert}
\end{theorem}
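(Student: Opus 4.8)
The plan is to realize the desired Stein \nbd\ as a sublevel set $\{\rho<0\}$ of a plurisubharmonic \fcn\ $\rho$ on $\C^{n}$, exploiting that $\R^{n}$ is totally real in $\C^{n}$: the \fcn\ $z\mapsto|\operatorname{Im}z|^{2}$ is (strictly) plurisubharmonic on $\C^{n}$ and vanishes exactly along $\R^{n}$.

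First I would reduce the statement. Let $A\subset\R^{n}$ be arbitrary and $U\subset\C^{n}$ an open \nbd\ of $A$; since $U$ is also an open \nbd\ of the relatively open set $A':=U\cap\R^{n}\supset A$, it suffices to find a Stein open set $\Omega$ with $A'\subset\Omega\subset U$, for then $A\subset\Omega\subset U$. So one may assume $A$ is open in $\R^{n}$ and $U\cap\R^{n}=A$; it then follows that for every open $U\supset A$ there is a Stein open $\Omega$ with $A\subset\Omega\subset U$, which is the assertion.

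Next I would reduce the problem to the construction of a single potential, namely: produce a \emph{continuous} plurisubharmonic \fcn\ $\rho$ on $\C^{n}$ with $\rho<0$ on $A$ and $\rho\ge 0$ on $\C^{n}\ssm U$. Granting this, set $\Omega:=\{\,z\in\C^{n}\mid\rho(z)<0\,\}$; then $A\subset\Omega$ and $\Omega\subset U$, and I claim $\Omega$ is Stein. Indeed, since $t\mapsto-1/t$ is increasing and convex on $(-\infty,0)$, the \fcn\ $-1/\rho$ is plurisubharmonic on $\Omega$, and $\psi:=\max(-1/\rho,\,|z|^{2})$ is a continuous plurisubharmonic exhaustion \fcn\ of $\Omega$: for $c>0$ the set $\{\psi\le c\}=\{\rho\le-1/c\}\cap\{|z|\le\sqrt c\}$ is closed, bounded, and contained in $\Omega$, hence compact in $\Omega$. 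Thus $\Omega$ is pseudoconvex, and by the solution of the Levi problem in $\C^{n}$ (Oka) it is Stein.

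The remaining step, the construction of $\rho$, is the one I expect to be the main obstacle; this is where the real content of the theorem lies. The naive choice $|\operatorname{Im}z|^{2}-1$ is plurisubharmonic and equal to $-1$ on $\R^{n}$, but $\{|\operatorname{Im}z|^{2}-1<0\}$ is a full tube which need not lie in $U$; one must ``bend'' this potential so that it becomes $\ge 0$ wherever $U$ is thin, i.e. where the imaginary part is forced to be large relative to the thinness of $U$. The real difficulty is that plurisubharmonic \fcn s possess upper envelopes (regularized maxima) but not lower envelopes --- an infimum of plurisubharmonic \fcn s need not be plurisubharmonic --- so one cannot simply carve the \nbd\ out, and the construction genuinely uses the totally real (indeed real-analytic) nature of $\R^{n}$; for a general closed subset of $\C^{n}$ the assertion fails. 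Concretely one chooses, on the open set $A$, a real-analytic positive \fcn\ $\varphi$ whose graph-tube $\{x+iy\mid x\in A,\ |y|<\varphi(x)\}$ lies in $U$ and which tends to $0$ at $\partial A$, complexifies it to a holomorphic $\Phi$ on a complex \nbd\ of $A$, and uses that $|\operatorname{Im}z|^{2}-\operatorname{Re}(\Phi(z)^{2})$ is plurisubharmonic (being $|\operatorname{Im}z|^{2}$ minus a pluriharmonic \fcn) and negative on $A$; one then patches such local potentials together and pushes the result up outside $U$. Carrying out this patching so that the resulting $\rho$ is continuous, stays $<0$ on all of $A$, and becomes $\ge 0$ off $U$ is the heart of the matter, and for it I would refer to \cite{Gr} (see also the related neighborhood theorems of Docquier--Grauert and of H\"ormander).
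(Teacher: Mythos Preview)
The paper does not prove this theorem at all: it is simply quoted as a classical result of Grauert, with the citation ``(cf.\ \cite{Gr})'' and no argument given. So there is no ``paper's own proof'' to compare your proposal against; the paper treats Theorem~\ref{thGrauert} as an external input.

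As for your sketch on its own merits: the overall architecture is sound --- reduce to an open $A\subset\R^{n}$, seek a continuous plurisubharmonic $\rho$ with $\{\rho<0\}$ squeezed between $A$ and $U$, and invoke Oka's solution of the Levi problem to conclude that $\{\rho<0\}$ is Stein. Your exhaustion argument for $\Omega=\{\rho<0\}$ is correct. But you yourself identify the crux --- the construction of $\rho$ --- and then explicitly defer it to \cite{Gr}. That is precisely the nontrivial content of Grauert's theorem, and your paragraph about complexifying a real-analytic $\varphi$ and patching is more a gesture toward the idea than an argument: you have not explained how to produce a \emph{real-analytic} positive $\varphi$ on $A$ with the required boundary behavior (this already needs Grauert's real-analytic approximation results or Whitney--Grauert type input), nor how the local potentials $|\operatorname{Im}z|^{2}-\operatorname{Re}(\Phi(z)^{2})$ are to be glued while remaining plurisubharmonic and negative exactly on $A$. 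So your write-up is an honest outline that, like the paper, ultimately rests on citing \cite{Gr} for the substantive step; it is not an independent proof.
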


In our framework, the duality is described as follows (cf. Section \ref{secld}). Let $V_{0}=V\ssm K$ and $V_{1}$ a \nbd\ of $K$ in $V$ and consider the coverings $\V_{K}=\{V_{0},V_{1}\}$ and $\V'_{K}=\{V_{0}\}$ of $V$ and $V_{0}$.
The duality pairing is give, for a  cocycle $(\xi_{1},\xi_{01})$ in $\scE^{(p,n)}(\V_{K},\V'_{K})$ and a \h\ $(n-p)$-form $\eta$
near $K$, by
\begin{equationth}\label{locpair}
\int_{R_{1}}\xi_{1}\wedge\eta+\int_{R_{01}}\xi_{01}\wedge\eta,
\end{equationth}
where $R_{1}$ is a compact real $2n$-dimensional \mfd\ with $C^{\infty}$ boundary in $V_{1}$ containing $K$ in its interior and
$R_{01}=-\partial R_{1}$. 
We may always choose a cocycle  so that $\xi_{1}=0$ if $V$ is Stein.

\paragraph{Local residue pairing\,:} 
Now we consider Theorem \ref{thMH} in the case $K=\{0\}$ and $(p,q)=(n,n)$. We also let $V=\C^{n}$.
In this paragraph we consider the usual orientation on $\C^{n}$.
We have the exact sequence
\[
\cdots\lra H^{n,n-1}_{\bp}(\C^{n}\ssm 0)\overset\delta\lra H^{n,n}_{\bar\vt}(\C^{n},\C^{n}\ssm 0)\lra 0.
\]
Thus every element in $H^{n,n}_{\bar\vt}(\C^{n},\C^{n}\ssm 0)$ is represented by a cocycle of the form $(0,-\t)$.
Since $\scO[K]=\scO_{\C^{n},0}=\scO_{n}$ in this case, the duality in Theorem \ref{thMH} is induced by the pairing
\[
H^{n,n}_{\bar\vt}(\C^{n},\C^{n}\ssm 0)\times \scO_{n}\overset{\int}\lra\C
\]
given by
\[
((0,-\t),h)\mapsto -\int_{R_{01}}h\t=\int_{S^{2n-1}}h\t.
\]
In the above, $h$ is a \h\ \fcn\ in a \nbd\ $V$ of $0$ in $\C^{n}$. We may take as $R_{1}$ a $2n$-ball around $0$ in $V$ so that $R_{01}=-\partial R_{1}=-S^{2n-1}$ with $S^{2n-1}$  a usually oriented $(2n-1)$-sphere.
Thus if $\t$ corresponds to $\gamma$, the above integral is equal to 
\[
(-1)^{\frac {n(n-1)} 2}\int_{\vG}h\gamma
\]
(cf. Remark \ref{rembm0}.\,2 and Theorem \ref{thcdcorrint}). In particular, if $\t=\b_{n}$ the pairing is given by
\begin{equationth}\label{lrp}
\int_{S^{2n-1}}h\b_{n}=\int_{\vG}h\kappa_{n}=\Big(\frac{1}{2\pi\sqrt{-1}}\Big)^n\int_{\vG} \frac{hdz_{1}\wedge
\cdots\wedge dz_{n}}{z_1\cdots z_n}
=h(0).
\end{equationth}

Likewise in the case $(p,q)=(0,n)$, the duality in Theorem \ref{thMH} is induced by the pairing
\[
H^{0,n}_{\bar\vt}(\C^{n},\C^{n}\ssm 0)\times \scO^{(n)}_{\C^{n},0}\overset{\int}\lra\C
\]
given by
\[
((0,-\t),\eta)\mapsto -\int_{R_{01}}\t\wedge\eta=\int_{S^{2n-1}}\t\wedge\eta.
\]

\subsection*{III. Sato hyperfunctions}

Sato hyperfunctions are defined in terms of relative cohomology with coefficients in the sheaf of \h\ \fcn s and the
theory is developed in the language of derived functors (cf. \cite{Sa}, \cite{Skk}). The use of relative Dolbeault cohomology via the relative Dolbeault theorem (Theorems \ref{thdrel}, \ref{thdrel2}) provides us with another way of treating the 
theory. This approach gives simple and explicit expressions of hyperfunctions and some fundamental operations on them and leads
to 
a number of new results. These are discussed in detail in \cite{HIS}. Here we pick up some of the essentials of the contents therein. In general
the theory of hyper\fcn s may be developed on an arbitrary real analytic \mfd\ and it  involves various orientation sheaves.
However for simplicity, here we consider hyper\fcn s  on open sets in $\R^{n}$  fixing various orientations.

\paragraph{Hyper\fcn s and hyperforms\,:} We consider the standard inclusion $\R^{n}\subset\C^{n}$, i.e.,
if  $(z_{1},\dots,z_{n})$, $z_{i}=x_{i}+\sqrt{-1}y_{i}$, is a coordinate system on $\C^{n}$, then $\R^{n}$ is given by $y_{i}=0$, $i=1,\dots,n$.  We orient $\R^{n}$ and $\C^{n}$
so that $(x_{1},\dots,x_{n})$ and $(y_{1},\dots,y_{n},x_{1},\dots,x_{n})$ are positive coordinate
systems. Thus $(y_{1},\dots,y_{n})$ is a positive coordinate system in the normal direction.
Note that the difference between this orientation of $\C^{n}$ and the usual one, where $(x_{1},y_{1},\dots,x_{n},y_{n})$ is positive, is the sign of $(-1)^{\frac{n(n+1)} 2}$.

With these, for an open set $U$ in $\R^{n}$, the space of hyperfunctions on $U$ is given by
\[
\scB(U)=H^{n}_{U}(V;\scO),
\]
where $V$ is an open set in $\C^{n}$ containing $U$ as a closed set and $\scO$ the sheaf of \h\ \fcn s on $\C^{n}$. We call such a $V$ a complex \nbd\ of $U$. Note that, by excision, the definition does not depend on the choice of the complex \nbd\ $V$. By the relative Dolbeault theorem (cf. Theorems \ref{thdrel} and \ref{thdrel2}), there is a 
canonical \iso\,:
\[
\scB(U)\simeq H^{0,n}_{\bar\vt}(V,V\ssm U).
\]
More generally we introduce the following\,:
\begin{definition} The space of {\em $p$-hyperforms} on $U$ is defined by
\[
\scB^{(p)}(U)=H^{p,n}_{\bar\vt}(V,V\ssm U).
\]
\end{definition}

Note that the definition does not depend on the choice of $V$ by excision (cf. Proposition~\ref{excision}). Denoting
by $\scO^{(p)}$ the sheaf of \h\ $p$-forms on $\C^{n}$, we have a canonical \iso\ (cf. Theorems
\ref{thdrel} and \ref{thdrel2})\,:
\[
H^{p,n}_{\bp}(V,V\ssm U)\simeq H^{n}_{U}(V;\scO^{(p)})
\]
so that $\scB^{(0)}(U)$ is canonically isomorphic with $\scB(U)$. Hyperforms are essentially equal to what have conventionally  been referred
to as differential forms with coefficients in hyper\fcn s.


\begin{remark}\label{rempc} In the above we implicitly used the fact that $\R^{n}$ is ``purely $n$-codimensional'' in $\C^{n}$ \wrt\
$\scO^{(p)}_{\C^{n}}$ and $\Z_{\C^{n}}$ (cf. \cite{KKK}). For the latter, this can be seen from the Thom \iso\ (cf. Subsection \ref{ssrdr}).
\end{remark}


\paragraph{Expression of hyperforms\,:} Let $U$ and $V$ be as above.
Letting $V_{0}=V\ssm U$ and $V_{1}$ a \nbd\ of $U$ in $V$, we  consider the open coverings $\V=\{V_{0},V_{1}\}$ 
and $\V'=\{V_{0}\}$ of $V$ and $V\ssm U$. 
Then $\scB^{(p)}(U)=H^{p,n}_{\bar\vt}(V,V\ssm U)=H^{p,n}_{\bar\vt}(\V,\V')$ and a $p$-hyperfom is represented by a pair $(\xi_{1},\xi_{01})$ with $\xi_{1}$ a 
$(p,n)$-form on $V_{1}$, which is automatically $\bp$-closed, and  $\xi_{01}$ a $(p,n-1)$-form on $V_{01}$ \st\ $\xi_{1}=\bar\partial\xi_{01}$ on $V_{01}$. 
We have the exact sequence (cf. \eqref{lexactrelD}, \eqref{lexactrelD2})\,:
\[
H_{\bp}^{p,n-1}(V)\lra H^{p,n-1}_{\bar\vt}(V\ssm U)\overset{\delta}\lra \scB^{(p)}(U)\overset{j^{*}}\lra H^{p,n}_{\bp}(V).
\]

By Theorem \ref{thGrauert}, we may take as $V$ a Stein open set and, if we do this,
we have $H^{p,n}_{\bp}(V)\simeq H^{n}(V,\scO^{(p)})=0$. Thus $\delta$ is
surjective and every $p$-hyperform is represented by a cocycle of the form $(0,-\t)$ with $\t$ a $\bar\partial$-closed
$(p,n-1)$-form on $V\ssm U$.

In the case $n>1$, $H^{p,n-1}_{\bp}(V)\simeq H^{n-1}(V,\scO^{(p)})=0$ and $\delta$ is an \iso. In the case $n=1$,
we have the exact sequence
\[
H_{\bp}^{p,0}(V)\lra H^{p,0}_{\bar\vt}(V\ssm U)\overset{\delta}\lra \scB^{(p)}(U)\lra 0,
\]
where $H^{p,0}_{\bar\vt}(V\ssm U)\simeq H^{0}(V\ssm U,\scO^{(p)})$ and 
$H^{p,0}_{\bp}(V)\simeq H^{0}(V,\scO^{(p)})$. Thus, for $p=0$,  we recover the original expression of hyperfunctions by Sato
in one dimensional case.

\begin{remark} Although a hyperform may be represented by a single differential form in most of the cases, it is important to keep in mind
that it is represented by a pair $(\xi_{1},\xi_{01})$ in general.
\end{remark}



Now we describe some of the operations on hyperforms.

\paragraph{Multiplication by real analytic \fcn s\,:} Let $\scA(U)$ denote the space of real analytic \fcn s on 
$U$. We define
the multiplication
\[
\scA(U)\times \scB^{(p)}(U)\lra \scB^{(p)}(U)
\]
by assigning to $(f,[\xi])$ the class of $(\tilde f\xi_{1},\tilde f\xi_{01})$ with $\tilde f$ a \h\ extension
of $f$. 

\paragraph{Partial derivatives\,:} 
We  define the partial derivative
\[
\frac \partial {\partial x_{i}}:\scB(U)\lra \scB(U)
\]
as follows. Let $(\xi_{1},\xi_{01})$ represent a hyper\fcn\ on $U$. We write $\xi_{1}=f\,d\bar z_{1}\wedge\cdots
\wedge d\bar z_{n}$
and 
$\xi_{01}=\sum_{j=1}^{n}g_{j}\,d\bar z_{1}\wedge\cdots\wedge\widehat{d\bar z_{j}}\wedge\cdots
\wedge d\bar z_{n}$.
Then $\frac \partial {\partial x_{i}}[\xi]$
is represented by the cocycle
\[
\bigl(\frac{\partial f}{\partial z_{i}}\,d\bar z_{1}\wedge\cdots
\wedge d\bar z_{n},\sum_{j=1}^{n}\frac{\partial g_{j}}{\partial z_{i}}\,d\bar z_{1}\wedge\cdots\wedge\widehat{d\bar z_{j}}\wedge\cdots
\wedge d\bar z_{n}\bigr).
\]

Thus for a differential operator $P(x,D)$, $P(x,D):\scB(U)\ra \scB(U)$ is well-defined.

\paragraph{Restriction\,:} Let $U'$ be an open subset of $U$. Take a complex \nbd\ $V'$ of $U'$ and a \nbd\ 
$V_{1}'$ of
$U'$ in $V'$ so that $V'\subset V$ and $V'_{1}\subset V_{1}$. Then the restriction  $\scB^{(p)}(U)\ra \scB^{(p)}(U')$
is defined by assigning to the class of $(\xi_{1},\xi_{01})$ the class of 
$(\xi_{1}|_{V'_{1}},\xi_{01}|_{V'_{01}})$, $V'_{01}=V'_{1}\ssm U'$.

\paragraph{Differential\,:} We  define the differential
\[
d:\scB^{(p)}(U)\lra\scB^{(p+1)}(U).
\]
by assigning to the class of $(\xi_{1},\xi_{01})$ the class of $(-1)^{n}(\partial\xi_{1},-\partial\xi_{01})$.  
From Theorem \ref{thexactpcd}, we have the exact sequence (de Rham complex for hyperforms, cf. Remark \ref{rempc})\,:
\begin{equationth}\label{hypdR}
0\lra\C(U)\lra \scB(U)\overset{d}\lra\scB^{(1)}(U)\overset{d}\lra\cdots\overset{d}\lra\scB^{(n)}(U)\lra 0.
\end{equationth}
We come back to the first part  below.
\vv

\paragraph{Integration\,:} 
Let  $K$ be a compact set in $U$. 
We define the space $\scB^{(p)}_{K}(U)$ of $p$-hyperforms on $U$ with support in $K$ as the kernel
of the restriction $\scB^{(p)}(U)\ra \scB^{(p)}(U\ssm K)$.
Then we have\,:

\begin{proposition} For any complex \nbd\ $V$ of $U$, there is a canonical \iso
\[
\scB^{(p)}_{K}(U)\simeq H^{p,n}_{\bar\vt}(V,V\ssm K).
\]
\end{proposition}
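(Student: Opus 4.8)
The plan is to realize both $\scB^{(p)}_{K}(U)$ and $H^{p,n}_{\bar\vt}(V,V\ssm K)$ inside one long exact sequence, and then to kill the remaining term using that $\R^{n}$ is purely $n$-codimensional in $\C^{n}$.

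First I would record the geometry. Since $K\subset U$ is compact, $V\ssm K$ is open in $\C^{n}$ and $U\ssm K=U\cap(V\ssm K)$ is closed in it, so $V\ssm K$ is a complex \nbd\ of $U\ssm K$; moreover $(V\ssm K)\ssm(U\ssm K)=V\ssm U$ because $K\subset U$. Hence, by the definition of $\scB^{(p)}$ together with excision (Proposition \ref{excision}),
\[
\scB^{(p)}(U)\simeq H^{p,n}_{\bar\vt}(V,V\ssm U),\qquad \scB^{(p)}(U\ssm K)\simeq H^{p,n}_{\bar\vt}(V\ssm K,\,V\ssm U),
\]
and under these identifications the restriction map $\scB^{(p)}(U)\to\scB^{(p)}(U\ssm K)$, which is defined by restriction of representing cochains, is precisely the natural map $i^{*}$ appearing in the exact sequence of the triple $(V,\,V\ssm K,\,V\ssm U)$ (note $V\ssm U\subset V\ssm K\subset V$). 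Thus $\scB^{(p)}_{K}(U)=\op{Ker}i^{*}$.

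Next I would apply Proposition \ref{proptriplerd} to this triple around degree $n$, obtaining the exact sequence
\[
H^{p,n-1}_{\bar\vt}(V\ssm K,V\ssm U)\overset{\delta}{\lra}H^{p,n}_{\bar\vt}(V,V\ssm K)\overset{j^{*}}{\lra}H^{p,n}_{\bar\vt}(V,V\ssm U)\overset{i^{*}}{\lra}H^{p,n}_{\bar\vt}(V\ssm K,V\ssm U),
\]
so that $\op{Ker}i^{*}=\op{Im}j^{*}$. It then remains to see that $j^{*}$ is injective, i.e.\ $\delta=0$, for which it suffices to prove $H^{p,n-1}_{\bar\vt}(V\ssm K,V\ssm U)=0$. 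By the relative Dolbeault theorem (Theorem \ref{thdrel2}) this group is $H^{n-1}_{U\ssm K}(V\ssm K;\scO^{(p)})$; and since $U\ssm K=\R^{n}\cap(V\ssm K)$ is open in $\R^{n}$, while $\R^{n}$ is purely $n$-codimensional in $\C^{n}$ with respect to $\scO^{(p)}_{\C^{n}}$ (Remark \ref{rempc}), the local cohomology sheaves $\mathcal{H}^{j}_{U\ssm K}(\scO^{(p)})$ — being the restriction to $V\ssm K$ of $\mathcal{H}^{j}_{\R^{n}}(\scO^{(p)}_{\C^{n}})$ — vanish for $j\ne n$. The local-to-global spectral sequence $H^{i}(V\ssm K;\mathcal{H}^{j}_{U\ssm K}(\scO^{(p)}))\Longrightarrow H^{i+j}_{U\ssm K}(V\ssm K;\scO^{(p)})$ then gives vanishing in all total degrees $<n$, in particular $H^{n-1}_{U\ssm K}(V\ssm K;\scO^{(p)})=0$.

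Granting this, $j^{*}$ is injective and restricts to a canonical isomorphism $H^{p,n}_{\bar\vt}(V,V\ssm K)\overset{\sim}{\lra}\op{Im}j^{*}=\op{Ker}i^{*}=\scB^{(p)}_{K}(U)$, and independence of the choice of complex \nbd\ $V$ is again excision (Proposition \ref{excision}) for the closed set $K$. I expect the only genuine obstacle to be the vanishing $H^{n-1}_{U\ssm K}(V\ssm K;\scO^{(p)})=0$, which is where the purely codimensional nature of $\R^{n}$ really enters; the remaining content is formal bookkeeping with the triple exact sequence and excision already established in the paper.
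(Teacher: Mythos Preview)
Your proof is correct and follows essentially the same route as the paper: both apply the triple exact sequence (Proposition~\ref{proptriplerd}) for $(V,\,V\ssm K,\,V\ssm U)$, identify the two rightmost terms with $\scB^{(p)}(U)$ and $\scB^{(p)}(U\ssm K)$, and kill the leftmost term $H^{p,n-1}_{\bar\vt}(V\ssm K,V\ssm U)$ via the pure $n$-codimensionality of $\R^{n}$ (Remark~\ref{rempc}). The only difference is that you spell out the vanishing step in more detail (relative Dolbeault plus the local-to-global spectral sequence), whereas the paper simply cites Remark~\ref{rempc}.
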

\begin{proof} Applying Proposition \ref{proptriplerd} for the triple $(V,V\ssm K,V\ssm U)$, we have the exact sequence
\[
 H^{p,n-1}_{\bar\vt}(V\ssm K,V\ssm U)\overset{\delta}\lra H^{p,n}_{\bar\vt}(V,V\ssm K)\overset{j^{*}}\lra H^{p,n}_{\bar\vt}(V,V\ssm U)\overset{i^{*}}\lra H^{p,n}_{\bar\vt}(V\ssm K,V\ssm U).
\]
By definition, $H^{p,n}_{\bar\vt}(V,V\ssm U)=\scB^{(p)}(U)$. Since $V\ssm K$ is a complex \nbd\ of $U\ssm K$
and $(V\ssm K)\ssm (U\ssm K)=V\ssm U$, $H^{p,n}_{\bar\vt}(V\ssm K,V\ssm U)=\scB^{(p)}(U\ssm K)$.
On the other hand, $H^{p,n-1}_{\bar\vt}(V\ssm K,V\ssm U)=0$ (cf. Remark \ref{rempc}).
\end{proof}

By the above proposition, we may define the integration on $\scB^{(n)}_{K}(U)$ by directly applying
\eqref{intDrel}, which we recall for the sake of completeness.
Let $V$ be a complex \nbd\ of $U$ and consider the coverings $\V_{K}=\{V_{0},V_{1}\}$ and $\V'_{K}=\{V_{0}\}$,
with $V_{0}=V\ssm K$ and $V_{1}$ a \nbd\ of $K$ in $V$. Then we have a canonical identification 
$\scB^{(p)}_{K}(U)=H^{p,n}_{\bar\vt}(\V_{K},\V'_{K})$. Let $R_{1}$ be a compact real $2n$-dimensional \mfd\ with $C^{\infty}$ boundary  in $V_{1}$ containing $K$ in its interior and
set $R_{01}=-\partial R_{1}$.
Then the integration 
\[
\int_{U}:\scB^{(n)}_{K}(U)\lra\C
\]
is given as follows.
Noting that $u\in \scB^{(n)}_{K}(U)=H_{\bar\vt}(\V_{K},\V'_{K})$
is represented by
\[
\xi = (\xi_1,\xi_{01}) \in \scE^{(n,n)}(\V_{K},\V'_{K})=\scE^{(n,n)}(V_{1})\oplus\scE^{(n,n-1)}(V_{01}),
\]
we have
\[
\int_{U}u=\int_{R_{1}}\xi_{1}+\int_{R_{01}}\xi_{01}.
\]
\paragraph{Duality\,:}
By Theorem \ref{thMH} we have
\begin{equationth}\label{dualloc}
\scB^{(p)}_{K}(U)=H^{p,n}_{\bp}(V,V\ssm K)\simeq \scO^{(n-p)}[K]'=\scA^{(n-p)}[K]',
\end{equationth}
where $\scA^{(n-p)}$ denotes the sheaf of germs of real analytic $(n-p)$-forms on $\R^{n}$ and
\[
\scA^{(n-p)}[K]=\lim_{\lra}\scA^{(n-p)}(U_{1}),
\]
the direct limit over the set of \nbd s $U_{1}$ of $K$ in $U$. Recall that the pairing is induced by \eqref{locpair}.




\paragraph{$\delta$-\fcn\ and $\delta$-form\,:} We consider the case $K=\{0\}\subset\R^{n}$.

\begin{definition}\label{defdeltafcn} The {\em $\delta$-\fcn} is the element in 
$\scB_{\{0\}}(\R^{n})=H^{0,n}_{\bar\vt}(\C^{n},\C^{n}\ssm\{0\})$
which is represented by
\[
(0,-(-1)^{\frac {n(n+1)}2}\b_{n}^{0}),
\]
where $\b_{n}^{0}$ is as defined in Remark \ref{rembm0}.\,1.
\end{definition}

The \iso\
(\ref{dualloc})  reads in this case\,:
\[
\scB_{0}(\R^{n})\simeq (\scA^{(n)}_{0})',
\]
where $\scA^{(n)}_{0}$ denotes the stalk of $\scA^{(n)}$ at $0$.
For a  representative $\o=h(x)\varPhi(x)$ of an element in $\scA^{(n)}_{0}$, $h(z)\varPhi(z)$ is its complex representative. Let $R_{1}$ be a small $2n$-ball around $0$ in $\C^{n}$ so that $R_{01}=-\partial R_{1}
=-(-1)^{\frac {n(n+1)} 2} S^{2n-1}$ with $S^{2n-1}$  a usually oriented $(2n-1)$-sphere. 
Then the $\delta$-\fcn\ is the hyperfunction that assigns to a representative $\o=h(x) \varPhi(x)$ the value
(cf. \eqref{lrp})
\[
-(-1)^{\frac {n(n+1)} 2}\int_{R_{01}}h(z)\b_{n}=\int_{S^{2n-1}}h(z)\b_{n}=h(0).
\]

\begin{definition} The {\em $\delta$-form} is the element in 
$\scB^{(n)}_{\{0\}}(\R^{n})=H^{n,n}_{\bar\vt}(\C^{n},\C^{n}\ssm
\{0\})$
which is
represented by
\[
(0,-(-1)^{\frac {n(n+1)}2}\b_{n}).
\]
\end{definition}

Recall the \iso\
(\ref{dualloc}), which reads in this case\,:
\[
\scB_{0}^{(n)}(\R^{n})\simeq (\scA_{0})'.
\]
For a  representative $h(x)$ of an element in $\scA_{0}$, $h(z)$ is its complex representative. Let $R_{1}$ be as above.
Then the $\delta$-form is the hyperform that assigns to a representative $h(x)$ the value
\[
-(-1)^{\frac {n(n+1)} 2}\int_{R_{01}}h(z)\b_{n}=\int_{S^{2n-1}}h(z)\b_{n}=h(0).
\]



\begin{remark}\label{remdelta}{\bf 1.} If we orient $\C^{n}$ so that the usual coordinate system $(x_{1},y_{1},\dots,x_{n},y_{n})$ is positive, the delta \fcn\ $\delta(x)$ is represented by 
$(0,-\b_{n}^{0})$. Also, the delta form is represented by 
$(0,-\b_{n})$. Incidentally, it has the same expression as the Thom class of the trivial complex \vb\ of rank $n$
(cf. \cite[Ch.III, Remark 4.6]{Su2}).
\smallskip

\noindent
{\bf 2.} Set $W_{i}=\{z_{i}\ne 0\}$, $i=1,\dots,n$, and $W_{n+1}=\C^{n}$ and consider the coverings $\W=\{W_{i}\}_{i=1}^{n+1}$ and 
$\W'=\{W_{i}\}_{i=1}^{n}$ of $\C^{n}$  and $\C^{n}\ssm 0$. 
We have the natural \iso s
\[
\scB_{\{0\}}(\R^{n})\simeq H^{0,n-1}_{\bp}(\C^{n}\ssm 0)\simeq H^{n-1}(\W';\scO)\simeq H^{n}(\W,\W';\scO).
\]
As noted in Remark \ref{rembm0}.\,1, under the middle \iso\ above, the class of $\b_{n}^{0}$
corresponds to the class of 
$(-1)^{\frac {n(n-1)}2}\kappa_{n}^{0}$.
If we choose the usual orientation on $\C^{n}$, the class corresponding to $[\kappa_{n}^{0}]$ in $H^{n}(\W,\W';\scO)$ is the traditional representation of the $\delta$-\fcn\ (cf. \eqref{lrp}).
\end{remark}

\paragraph{1 as a hyperfunction\,:} We examine the map $\C(U)\ra\scB(U)$ in (\ref{hypdR}).
Let $\V$ and $\V'$ be as before. Then it is given by $\rho^{n}:H^{n}_{D}(\V,\V')\ra H^{0,n}_{\bar\vt}(\V,\V')$, which is induced by $(\o_{1},\o_{01})\mapsto (\o_{1}^{(0,n)},\o_{01}^{(0,n-1)})$ (cf. Corollary \ref{cordRD}). For simplicity
we assume that $U$ is connected.
Then we have the commutative diagram\,:
 \[
\SelectTips{cm}{}
\xymatrix@C=.6cm
@R=.4cm
{\C= H^{0}(U;\C)\ar[r]^-{\sim}_-{T}&H^{n}(V,V\ssm U;\C)\ar[r]^-{\iota}\ar@{-}[d]^-{\wr}& 
H^{n}(V,V\ssm U;\scO)=\scB(U)\ar@{-}[d]^-{\wr}\\
{} &H^{n}_{D}(\V,\V')\ar[r]^-{\rho^{n}}& H^{0,n}_{\bar\vt}(\V,\V'),}
\]
where $T$ denotes the Thom \iso,
which sends $1\in\C$ 
 to the Thom
class $\varPsi_{U}\in H^{n}(V,V\ssm U;\C)$ (cf. \eqref{thom}). If $\vP_{U}$ is represented by $(\psi_{1},\psi_{01})$ in $H^{n}_{D}(\V,\V')$, as a hyper\fcn, $1$ is represented by $\rho^{n}(\psi_{1},\psi_{01})=
(\psi_{1}^{(0,n)},\psi_{01}^{(0,n-1)})$. In particular, we may set $(\psi_{1},\psi_{01})=(0,-\psi_{n}(y))$,
where $\psi_{n}(y)$ is the angular form on $\R^{n}_{y}$ (cf. Proposition \ref{thtrivial}). Thus as a hyper\fcn, $1$ is represented by
$(0,-\psi_{n}^{(0,n-1)})$.
Noting that $y_i = 1/(2\sqrt{-1})(z_i - \bar{z}_i)$,
we compute
\[
\psi_{n}^{(0,n-1)}=(\sqrt{-1})^n C_n \dfrac{
	\sum_{i=1}^n 
	(-1)^i(z_i- \bar{z}_i) d\bar{z}_1 \wedge \dots \wedge 
	\widehat{d\bar{z}_i} \wedge \dots \wedge
	d\bar{z}_n}{\Vert z-\bar{z}\Vert^n}.
\]

In particular, if $n=1$,
\[
\psi_{1}^{(0,0)}=\frac 12 \frac y{|y|}.
\]

\paragraph{Embedding of real analytic forms\,:} Let $U$ and $V$ be as above.
We define a morphism
\[
\scA^{(p)}(U)\lra\scB^{(p)}(U)=H^{p,n}_{\bar\vt}(V,V\ssm U)
\]
by assigning to an element $\o(x)$ in $\scA^{(p)}(U)$ the class of $(\psi_{1}\wedge\o(z),\psi_{01}\wedge\o(z))$, where
$(\psi_{1},\psi_{01})$ is a representative of the  Thom class as above and  $\o(z)$ the complexification of  $\o(x)$. Note that $(\psi_{1}\wedge\o,\psi_{01}\wedge\o)$ is a cocycle as $\o$ is \h.  This induces an
embedding
$\iota^{(p)}:\scA^{(p)}(U)\hra\scB^{(p)}(U)$ compatible with the differentials $d$ of $\scA^{(\bullet)}(U)$ and $\scB^{(\bullet)}(U)$. In particular, if $p=0$, we have the embedding $\scA(U)\hra\scB(U)$, which is
given by $f\mapsto (\tilde f\psi_{1},\tilde f\psi_{01})$ with $\tilde f $ the complexification of $f$.

\subsection*{III. Some others}


We may develop the theory of Atiyah classes in the context of \v{C}ech-Dolbeault cohomology, which is
conveniently used to define
their localizations in the relative Dolbeault cohomology. In particular we have the $\bp$-Thom class
of a \h\ \vb, see \cite{ABST} and \cite{Su8} for details.

We refer to \cite{ASTT}
for a possible application of the above to the study of Hodge \str s under blowing-up.
We may equally use the complex of currents, instead
of that of differential forms, to define the relative Dolbeault cohomology. One of the advantages of this is that
the push-forward morphism is available, see \cite{Ta} for details and applications in the context of \cite{ASTT}.

\bibliographystyle{plain}

\begin{thebibliography}{99}

\bibitem{ABST} M. Abate, F. Bracci, T. Suwa and F. Tovena, {\em Localization of Atiyah classes}, Rev. Mat. Iberoam.
{\bf 29} (2013), 547-578.

\bibitem{ASTT} D. Angella, T. Suwa, N. Tardini and A. Tomassini, {\em Note on Dolbeault cohomology and Hodge structures up to bimeromorphisms}, arXiv:1712.08889.







\bibitem{BT}  R. Bott and L. Tu,
{\em Differential Forms in Algebraic Topology},  Graduate Texts in Math. {\bf 82},
Springer, 1982.




\bibitem{G} R. Godement, {\em Topologie Alg\'ebrique et Th\'eorie des Faisceaux}, Hermann, Paris, 1958.

\bibitem{Gr} H. Grauert, {\em On Levi's problem and the imbedding of real analytic manifolds}, Ann. Math. {\bf 68}
(1958), 460-472.
\bibitem{GrR} H. Grauert and R. Remmert, {\em Theory of Stein Spaces}, Grundlehren der math. Wiss. {\bf 236}, Springer, 1979.


\bibitem{GH} P. Griffiths and J. Harris, {\em Principles of Algebraic
Geometry}, John Wiley $\&$ Sons, 1978.



\bibitem{Ha} R. Hartshorne, {\em Local Cohomology, A seminar given by A. Grothendieck, Harvard University, Fall, 1961}, Lecture Notes in Math. {\bf 41}, Springer, 1967.

\bibitem{H1} R. Harvey, {\em Hyperfunctions and partial differential equations}, Thesis, Stanford Univ., 1966.

\bibitem{H2} R. Harvey, {\em Integral formulae connected by Dolbeault's isomorphism},  Rice Univ. Studies {\bf 56} (1970), 77-97.

\bibitem{Hir} F. Hirzebruch, {\em Topological Methods in Algebraic Geometry},
Third ed., Springer,  1966.


\bibitem{HIS} N. Honda, T. Izawa and T. Suwa, {\em Sato hyperfunctions via relative Dolbeault cohomology},
arXiv:1807.01831v2.

\bibitem{HT} S.A. Huggett and K.P. Tod, {\em An Introduction to Twister Thery}, Second ed., London Math.
Soc. Student Texts {\bf 4}, Cambridge Univ. Press, 1994.


\bibitem{Id} C. Ida, {\em A note on the relative cohomology of complex manifolds}, Bul. \c{S}tiin\c t. Univ.
Politeh.  Timi\c s. Ser. Mat. Fiz. {\bf 56(70)} (2011), no. 2, 23-29.

\bibitem{KKK} M. Kashiwara, T. Kawai and T. Kimura (translated by G. Kato),
 {\em Foundations of Algebraic Analysis}, Princeton Math. Series {\bf 37}, Princeton Univ. Press, 1986.

\bibitem{KS} M. Kashiwara and P. Schapira, {\em Sheaves on Manifolds}, Grundlehren der Math. {\bf292}, Springer, 1990.


\bibitem{K} H. Komatsu, {\em Hyperfunctions of Sato and Linear Partial Differential Equations with Constant
Coefficients}, Seminar Notes {\bf 22}, Univ. Tokyo, 1968 (in Japanese).

\bibitem{Leh2} D. Lehmann,
{\em Syst\`emes d'alv\'eoles et int\'egration sur le complexe de \v Cech-de~Rham},
Publications de l'IRMA, 23, N$^{\rm ^o}$ VI, Universit\'e de Lille I, 1991.



\bibitem{M} A. Martineau, {\em Les hyperfonctions de M. Sato}, S\'em. N. Bourbaki, 1960-1961, 
n$^{\rm o}$~214, 127-139.


\bibitem{N} J. Noguchi, {\em Analytic Function Theory of Several Variables {\rm :} Elements of Oka's Coherence}, Springer, 2016.

\bibitem{Sa} M. Sato, {\em Theory of hyperfunctions I, II}, J. Fac. Sci. Univ. Tokyo, {\bf 8} (1959), 139-193, 387-436.

\bibitem{Skk} M. Sato, T. Kawai and M. Kashiwara, {\em Microfunctions
and pseudo-differential equations}, Hyperfunctions and Pseudo-Differential Equations,  
Proceedings Katata 1971 (H. Komatsu, ed.), Lecture Notes in Math. {\bf 287}, Springer, 1973,
265-529. 

\bibitem{Se} J.-P. Serre, {\em Un th\'eor\`eme de dualit\'e}, Comm. Math. Helv., {\bf 29} (1955), 9-26.

\bibitem{Su2}  T. Suwa,
{\em Indices of Vector Fields and Residues of Singular Holomorphic Foliations},
 Actualit\'es Math\'ematiques, Hermann, Paris, 1998.

\bibitem{Su7} T. Suwa, {\em Residue Theoretical Approach to Intersection Theory},
Proceedings  of the 9-th International Workshop on Real and
Complex Singularities, S\~ao Carlos, Brazil 2006, Contemp. Math. {\bf 459}, Amer. Math. Soc., 207-261, 2008.

\bibitem{Su8} T. Suwa, {\em \v Cech-Dolbeault cohomology and the $\bp$-Thom class}, Singularities -- Niigata-Toyama 2007, Adv. Studies in Pure Math. {\bf 56}, Math. Soc. Japan, 321-340, 2009.

\bibitem{Su11} T. Suwa, {\em Representation of relative sheaf cohomology}, in
preparation. A summary of it is published under the title ``Relative cohomology for the sections of a complex of fine sheaves'' in the proceedings of the Kinosaki Algebraic Geometry Symposium
2017, 113-128, 2018.


\bibitem{Ta} N. Tardini, {\em Relative \v{C}ech-Dolbeault homology and applications}, arXiv:1812.00362.









 

 



\bibitem{T} F. Tr\`eves, {\em Topological Vector Spaces, Distributions and Kernels}, Academic Press, 1967.


\end{thebibliography}

\vv

T. Suwa


Department of Mathematics 

Hokkaido University

Sapporo 060-0810, Japan

tsuwa@sci.hokudai.ac.jp
\end{document}